%% file: Hodge_Fermat_Rationals.tex
\documentclass[12pt,leqno]{article}
\usepackage[left=3cm, right=3cm]{geometry}

\usepackage{bbm, dsfont}
\usepackage{graphicx, amsfonts, amsthm, amsxtra, amssymb, verbatim, makeidx,longtable,booktabs}
\usepackage{subeqnarray, relsize}
\usepackage[mathscr]{euscript}
\usepackage{hyperref, tikz-cd}
\usepackage{aliascnt}
\usepackage[nameinlink,capitalize,noabbrev]{cleveref}
\usepackage{mathtools}
\hypersetup{
    colorlinks=true,       
    linkcolor=blue,          
    citecolor=blue,        
    filecolor=blue,      
    urlcolor=blue           
}
\usepackage{packages/rufino}
\usepackage{stmaryrd}
\usepackage{float}

\newtheorem{theo}{Theorem}[section]
\newtheorem{coro}{Corollary}[section]

\newtheorem{prop}{Proposition}[section]
\newtheorem{conj}{Conjecture}[section]

\theoremstyle{remark}
\newtheorem{rem}{Remark}[section]

\theoremstyle{definition}
\newtheorem{defi}{Definition}[section]

\newenvironment{proof*}
  {\begin{proof}}
  {\end{proof}}

\begin{document}

\include{notations}

\def\Norm{\text N}

\begin{center}
{\LARGE\bf Lengths of Hodge characters in Fermat varieties
}
\\
\vspace{.1in} {\large {\sc Maximiliano Miranda, Hossein Movasati, Lucas Rufino and Roberto Villaflor}}

\end{center}
{\it Abstract: We revisit the Hodge conjecture for Fermat (and weighted Fermat) varieties. We review the reduction of the Hodge conjecture in terms of Hodge characters introduced by Shioda and its further reduction in terms of the formal module of tuples introduced by Aoki. Following Aoki, we introduce several lengths on this formal module, and by means of Kang's theorem on the Hodge conjecture for Fermat fourfolds we reduce the conjecture to bound the lengths of exceptional tuples by 6. By applying length reduction algorithms we prove the Hodge conjecture for all Fermat varieties (of any dimension) of degree less than 65 and different from 44, 51 and 52. The main novelty of our length reduction method is the introduction of a lift operation on the degree of the tuple. 
}


\section{Introduction}
\input{sections/s01}

\section{Hodge Cycles in Weighted Fermat Varieties}\label{sec:2}
\input{sections/s02}

\section{Operations on Characters}\label{sec:3}
\input{sections/s03}

\section{Formal Module of Tuples}\label{sec:4}
\input{sections/s04}




\bibliography{biblio.bib}

\bibliographystyle{alpha}

\bigskip

\noindent{\sc Departamento de Matemática, Universidad Técnica Federico Santa María}\\
{\sc Avenida España 1680, Valparaíso, Chile}\\
\textit{Email address:} {\tt maximiliano.mirandah@usm.cl}
\\
\\
\noindent{\sc Instituto de Matem\'atica Pura e Aplicada, IMPA}\\
{\sc Estrada Dona Castorina, 110, 22460-320, Rio de Janeiro, RJ, Brazil}\\
\textit{Email address:} {\tt hossein@impa.br}
\\
\\
\noindent{\sc Graduate School of Mathematics, Nagoya University}\\
{\sc Chikusa-ku, Nagoya, 464-8602, Japan}\\
\textit{Email address:} {\tt lucas.martelotte@impa.br}
\\
\\
\noindent{\sc Departamento de Matemática, Universidad Técnica Federico Santa María}\\
{\sc Avenida España 1680, Valparaíso, Chile}\\
\textit{Email address:} {\tt roberto.villaflor@usm.cl}

\end{document}

%% file: notations.tex
\def\Nn{{\sf n}}
\def\Am{{\sf A}}

\def\Z{\mathbb{Z}}                   
\def\Q{\mathbb{Q}}                   
\def\C{\mathbb{C}}                   
\def\N{\mathbb{N}}                   
\def\Ff{\mathbb{F}}                  
\def\uhp{{\mathbb H}}                
\def\A{\mathbb{A}}                   
\def\dR{{\rm dR}}                    
\def\F{{\cal F}}                     
\def\Sp{{\rm Sp}}                    
\def\Gm{\mathbb{G}_m}                 
\def\Ga{\mathbb{G}_a}                 
\def\Tr{{\rm Tr}}                      
\def\tr{{{\mathsf t}{\mathsf r}}}                 
\def\spec{{\rm Spec}}            
\def\proj{{\rm Proj}}
\def\ker{{\rm ker}}              
\def\GL{{\rm GL}}                

\def\k{{\sf k}}                     
\def\ring{{\sf R}}                   
\def\sk{{\mathfrak k }}             
\def\sring{{\mathfrak R }}          

\def\X{{\sf X}}                      
\def\T{{\sf T}}                      
\def\V{{    V}}                   

\def\Ts{{\sf S}}
\def\cmv{{\sf M}}                    
\def\BG{{\sf G}}                       
\def\podu{{\sf pd}}                   
\def\ped{{\sf U}}                    
\def\per{{\sf  P}}                   
\def\gm{{\sf  A}}                    
\def\gma{{\sf  B}}                   
\def\ben{{\sf b}}                    

\def\Rav{{\mathfrak M }}                     
\def\Ram{{\cal C}}                         
\def\Rap{{i(\Lie(\BG))}}                    

\def\nov{{  n}}                    
\def\mov{{  m}}                    
\def\Yuk{{\sf Y}}                     
\def\Ra{{\sf R}}                      

\def\Da{{\sf D}}                      

\def\hn{{\sf h}}                      
\def\cpe{{\sf C}}                     
\def\g{{\sf g}}                       
\def\t{{   t}}                       
\def\v{{   v}}                       

\def\pedo{{\sf  \Pi}}                  

\def\Der{{\rm Der}}                   
\def\MMF{{\sf MF}}                    
\def\codim{{\rm codim}}                
\def\dim{{\rm    dim}}                
\def\Lie{{\rm Lie}}                   

\def\u{{\sf u}}                       

\def\imh{{  \Psi}}                 
\def\imc{{  \Phi }}                  
\def\stab{{\rm Stab }}               
\def\Vec{{\Theta}}                 
\def\prim{{0}}                  
\def\Zero{{\rm Zero}}                  

\def\Fg{{\sf F}}     
\def\hol{{\rm hol}}  
\def\non{{\rm non}}  
\def\alg{{\rm alg}}  
\def\an{{\rm an}}   
\def\for{{\rm for}}  

\def\bcov{{\rm \O_\T}}       

\def\leaves{{\cal L}}        

\def\Hse{{\rm HS}}        
\def\Hpo{{\rm HP}}        
\def\Hfu{{\rm HF}}        
\def\Hsc{{\rm Hilb}}     

\def\TS{\mathlarger{{\bf T}}}                
\def\IS{\mathlarger{{\cal I}}}                

\def\vf{{\sf v}}                      
\def\wf{{\sf w}}                      

\def\red{{\rm red}}                           

\def\Ua{{   L}}                      
\def\plc{{ Z_\infty}}    

\def\gru{\mu} 
\def\pg{{ \sf S}}               
\def\group{{ G}}            

\def\GM{{\rm GM}}

\def\perr{{\sf q}}        
\def\perdo{{\cal K}}   
\def\sfl{{\mathrm F}} 
\def\sp{{\mathbb S}}  

\newcommand\diff[1]{\frac{d #1}{dz}} 
\def\End{{\rm End}}              

\def\sing{{\rm Sing}}            
\def\cha{{\rm char}}             
\def\Gal{{\rm Gal}}              
\def\jacob{{\rm jacob}}          
\def\tjurina{{\rm tjurina}}      
\newcommand\Pn[1]{\mathbb{P}^{#1}}   
\def\P{\mathbb{P}}                   
\def\Ff{\mathbb{F}}                  

\def\O{{\cal O}}                     
\def\as{\mathbb{U}}                  
\def\ring{{\mathsf R}}                         
\def\R{\mathbb{R}}                   

\newcommand\ep[1]{e^{\frac{2\pi i}{#1}}}
\newcommand\HH[2]{H^{#2}(#1)}        
\def\Mat{{\rm Mat}}              
\newcommand{\mat}[4]{
     \begin{bmatrix}
            #1 & #2 \\
            #3 & #4
       \end{bmatrix}
    }                                
\newcommand{\matt}[2]{
     \begin{bmatrix}                 
            #1   \\
            #2
       \end{bmatrix}
    }
\def\cl{{\rm cl}}                

\def\hc{{\mathsf H}}                 
\def\Hb{{\cal H}}                    
\def\pese{{\sf P}}                  

\def\PP{\tilde{\cal P}}              
\def\K{{\mathbb K}}                  

\def\M{{\cal M}}
\def\RR{{\cal R}}
\newcommand\Hi[1]{\mathbb{P}^{#1}_\infty}
\def\pt{\mathbb{C}[t]}               
\def\W{{\cal W}}                     
\def\gr{{\rm Gr}}                
\def\Im{{\rm Im}}                
\def\Re{{\rm Re}}                
\def\depth{{\rm depth}}
\newcommand\SL[2]{{\rm SL}(#1, #2)}    
\def\sl{{\rm SL}}                    
\newcommand\PSL[2]{{\rm PSL}(#1, #2)}  
\def\Resi{{\rm Resi}}              

\def\L{{\cal L}}                     
\def\Aut{{\rm Aut}}              
\def\any{R}                          
\newcommand\ovl[1]{\overline{#1}}    

\newcommand\mf[2]{{M}^{#1}_{#2}}     
\newcommand\mfn[2]{{\tilde M}^{#1}_{#2}}     

\newcommand\bn[2]{\binom{#1}{#2}}    
\def\ja{{\rm j}}                 
\def\Sc{\mathsf{S}}                  
\newcommand\es[1]{g_{#1}}            
\newcommand\WW{{\mathsf W}}          
\newcommand\Ss{{\cal O}}             
\def\rank{{\rm rank}}                
\def\Dif{{\cal D}}                   
\def\gcd{{\rm gcd}}                  
\def\zedi{{\rm ZD}}                  
\def\BM{{\mathsf H}}                 
\def\plf{{\sf pl}}                             
\def\sgn{{\rm sgn}}                      
\def\diag{{\rm diag}}                   
\def\hodge{{\rm Hdg}}
\def\HF{{\sf F}}                                
\def\WF{{\sf W}}                               
\def\HV{{\sf HV}}                                
\def\pol{{\rm pole}}                               
\def\bafi{{\sf r}}
\def\Id{{\rm Id}}                               
\def\gms{{\sf M}}                           
\def\Iso{{\rm Iso}}                           

\def\hl{{\rm L}}    
\def\imF{{\rm F}}
\def\imG{{\rm G}}

\def\HL{{\rm Ho}}     
\def\NLL{{\rm NL}}   

\def\RG{{\bf G}}          
\def\rg{{\bf g}}     
\def\rbullet{{\cdot}}
\def\Ld{{\cal L}}      
\def\Ro{{\rm R}}     
\def\ZS{{\rm ZeSc}}     
\def\ZI{{\rm ZeId}}     
 \def\integ{{\rm Int}}  

\def\tmap{{\sf t}}

\def\ivhs{{\rm IVHS}}    
\def\ivhsmaps{{{\Delta}_{}}}   
\def\sch{{\rm Sch}}   
\def\mk{{\mathfrak  m}}   
\def\pk{{\mathfrak  p}}   
\def\qk{{\mathfrak  q}}   

\newcommand\licy[1]{{\mathbb P}^{#1}} 

\def\SS{\mathscr{S}}    

%% file: sections/s01.tex
The purpose of this article is twofold. On the one hand, we compile the known results in the literature on the Hodge conjecture for Fermat varieties, and we explore to what extent the conjecture can be proved by means of the explicit
families of algebraic cycles that are available on them, namely the linear cycles of Ran \cite{Ran1980} and Shioda \cite{sh79}, the cycles introduced by Aoki \cite{aoki1987}, and everything that can be built out of them by the geometric operations of join, deletion, permutation and pull-back. On the other hand, we introduce the appropriate language to work with the Hodge conjecture in weighted Fermat varieties. This is done in order to use it in an upcoming article, where we propose a method to search for new algebraic cycles in Fermat varieties, based on searches performed in weighted Fermat varieties. Our main result is the following.
 
\begin{theo}[\cref{mainthm}]\label{thm1}
The Hodge conjecture holds for the Fermat variety $X^n_d$ of any dimension $n$ and any degree $d<65$ with $d\neq 44,51,52$. The same holds for every weighted Fermat variety $X^n_m$ whose degree $d=\operatorname{lcm}(m)$ satisfies these conditions.
\end{theo}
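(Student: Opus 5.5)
The plan follows the Shioda--Aoki reduction described in the abstract. First, by Shioda's decomposition, the space of Hodge cycles of $X^n_d$ (with $n=2k$) splits into one-dimensional character spaces indexed by tuples $a=(a_0,\dots,a_{n+1})\in(\Z/d\Z)^{n+2}$ with $\sum a_i\equiv 0$, $a_i\neq 0$. Such a tuple is Hodge exactly when $\sum_i \langle t a_i/d\rangle = k+1$ for every $t\in(\Z/d\Z)^\times$. The Hodge conjecture for $X^n_d$ then amounts to showing that every Hodge tuple lies in the span of characters realized by algebraic cycles. For the weighted case I would use the quotient presentation $X^n_m = X^n_d/G$ with $d=\operatorname{lcm}(m)$. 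The Hodge classes of $X^n_m$ are the $G$-invariant Hodge classes of $X^n_d$, so they correspond to a subset of the same Hodge tuples. Algebraic cycles push forward along the finite quotient, and the projection formula then transports the statement. So it suffices to treat $X^n_d$.

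Second, I would pass to Aoki's formal module of tuples: the free abelian group on multisets of elements of $\Z/d\Z$, modulo the relations coming from the operations of \cref{sec:3}. These are join (concatenation), deletion of pairs $\{a,-a\}$, permutation, and pull-back/push-forward along $d'\mid d$. The module carries the length functions of \cref{sec:4}. The standard tuples are:
\begin{itemize}
\item those decomposable into pairs $\{a,-a\}$, which come from linear cycles;
\item Aoki's standard tuples such as $\{a,a+d/2,-2a+d/2\}$-type patterns, which come from Aoki cycles.
\end{itemize}
These are algebraic, and the class of algebraic tuples is closed under the operations. The key reduction is: if every Hodge tuple is equivalent, in the formal module, to a $\Z$-combination of algebraic tuples and of Hodge tuples of length $\le 6$, then the conjecture holds in all dimensions. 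Hodge tuples of length $\le 6$ correspond to Fermat fourfolds, where Kang's theorem supplies algebraicity, and join lifts these to higher dimension. Hence the statement reduces to showing that every exceptional (non-decomposable) Hodge tuple of degree $d$ has length $\le 6$ modulo algebraic tuples.

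Third, the length reduction itself. For each $d<65$ with $d\ne 44,51,52$, I would enumerate the finitely many indecomposable Hodge tuples up to the $(\Z/d\Z)^\times$-action and permutation. Aoki's structure theory shows that indecomposable Hodge tuples have bounded length depending on $d$, so this enumeration is finite. To each tuple I would apply a reduction algorithm that tries to write it as a sum, in the formal module, of a standard tuple and a shorter Hodge tuple, which strictly decreases the length. When no such split exists in degree $d$, I would use the new lift operation: pull the tuple back to degree $ed$ (multiply entries by $e$), where more standard tuples become available, reduce there, and push back down. This step is the computational heart of the argument and the main obstacle, since it is a finite but large computer search. I expect the excluded degrees $44,51,52$ to be exactly those where some exceptional tuple of length $\ge 8$ resists both direct splitting and lifting within the search bounds. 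Finally, I would invoke \cref{mainthm}'s bookkeeping to combine the computer verification, Kang's theorem and the weighted-to-Fermat reduction into the stated theorem.
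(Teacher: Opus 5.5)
Your architecture matches the paper's: Shioda's character decomposition, the formal module (join as sum, deletion as difference, lifts as identifications), algebraicity of linear and standard tuples, Kang's theorem for everything of length at most $6$, and lifting to shorten tuples. Your reduction of the weighted case to $X^n_d$ by pushing forward along $X^n_d\to X^n_m$ is also fine.

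The gap is in the step that carries the whole content of the theorem. You never use Aoki's generation theorem (\cref{thm:generators_of_Bm}): $B_d/(D_d+S_d)$ is generated as a $G_d$-module by the exceptional tuples $\xi_q$ with $q\in Q$, $q\mid d$. Since $C_d$ is a $G_d$-submodule containing $D_d+S_d$, this reduces the theorem (via \cref{cor:reduce_hodge_conjecture_to_Tm} and \cref{prop:lengthredto6}) to a handful of explicit checks:
\begin{itemize}
\item the elements of $Q$ below $65$ are $12,15,20,21,28,33,35,39,44,51,52,55,57$;
\item every $d<65$ other than $44,51,52$ has all its divisors in $Q$ among the ten remaining values;
\item for each of those ten, the tables exhibit a representative of $\xi_q$ modulo $D+\widetilde S$ of length at most $6$ (for $35,55,57$ only after lifting to degree $2q$).
\end{itemize}
The excluded degrees are exactly the $q\in Q$ whose best representative found has length $7$, not $\ge 8$ as you guess. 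No other $d<65$ is a multiple of them.

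Your substitute is to enumerate every indecomposable Hodge tuple in every degree $d<65$ and shorten each one individually by subtracting standard tuples. As it stands this is not a proof, for three reasons.
\begin{itemize}
\item Finiteness is fine (Gordan's lemma), but you only announce that the search should succeed. Nothing in the proposal certifies the key claim.
\item The per-tuple target is the wrong one. Take an indecomposable tuple in degree $60$ whose class in $B_{60}/(D_{60}+S_{60})$ is a combination of several translates of $\xi_{12},\xi_{15},\xi_{20}$. It is algebraic once those generators are, but nothing guarantees it is congruent to a single tuple of length at most $6$ modulo linear and standard tuples. What is needed is algebraicity of a generating set; linearity of $C_d$ then handles all combinations.
\item A descent that only accepts single, strictly shortening subtractions is weaker than the paper's integer program, which combines all generators of $D_d+S_d$ with coefficients of both signs at once. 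Such descents can stall even on elements of $D_d+S_d$. For instance, $(1,8,9,10)_{14}=s_{2,1}+s_{2,9}-(2,12)-(7,7)$ lies in $D_{14}+S_{14}$, yet no single linear or standard generator shortens it.
\end{itemize}
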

 
Prior to this, the conjecture was known (in arbitrary dimension) for $d\leq 20$ by the work of Shioda \cite{sh79}, for $d$ a prime power or twice a prime power by the work of Aoki \cite{aoki1987}, and for $d=21$ by the computer assisted verification of da Silva Jr. \cite{da2021notes}. In particular, \cref{thm1} settles the degrees $28$, $33$, $35$, $39$, $55$, $56$ and $57$, and it leaves $44$, $51$ and $52$ as the only degrees below $65$ for which the conjecture remains open. This follows from the general fact, proved by Aoki \cite{Aoki1983}, that in order to prove the Hodge conjecture for all degree $d$ Fermat varieties, it is enough to prove it for Fermat varieties of degree $q\in Q$ dividing $d$, where $Q$ denotes the set of products of an even number of distinct elements of $P=\{4\}\cup\{p\geq 3\ \text{prime}\}$ (see \cref{correddivQ}). In particular, to settle all the degree $d< 3\cdot 4\cdot 5\cdot 7=420$ cases, the relevant open cases are of degree $pq$ for $p\neq q$ and $p,q\in P$. In particular, the degrees of the form $2^\alpha\cdot 3^\beta\cdot 5^\gamma$ were already known by \cite{sh79,Aoki1983} and our result implies the cases of the form $2^\alpha\cdot 3^\beta\cdot 7^\gamma$, $2^\alpha\cdot 5^\beta\cdot 7^\gamma$, $3^\alpha\cdot 5^\beta\cdot 11^\gamma$ and $2\cdot 3^\alpha\cdot 5^\beta\cdot 11^\gamma$ for all $\alpha,\beta,\gamma\in\N$.
 
In order to obtain this result we work with the formal module of tuples $R$, introduced by Aoki \cite{Aoki1983}. It is the free $\mathbb{Z}$-module generated by $(\mathbb{Q}/\mathbb{Z})\setminus\{0\}$, inside which a character becomes a sum of formal symbols, the join of two characters becomes their sum, the deletion becomes a subtraction, a permutation becomes the identity, and the lift of a character to a Fermat variety of larger degree becomes an identification. Hodge, algebraic, linear and standard characters generate submodules (see \cref{sec:4.1})
\[
D+\widetilde{S}\;\subseteq\; C\;\subseteq\; B\;\subseteq\; R ,
\]
and, since the four operations above preserve algebraicity, the Hodge conjecture for all weighted Fermat varieties of degree $d$ follows from the equality $C_d=B_d$ (\cref{thm:reduce_hodge_conjecture_to_Bm}). By a theorem of Aoki (\cref{thm:generators_of_Bm}), the quotient $B_d/(D_d+S_d)$ is generated by finitely many \emph{exceptional tuples} $\xi_q$, for each $q\in Q$ divisor of $d$. Whether these exceptional tuples are algebraic is precisely what is left of the conjecture. In this module we introduce several notions of length. The naive length $\ell(\alpha)$ counts the number of entries of a tuple, and by taking quotients by the submodules $D$, $D+\widetilde{S}$ and $C$ we obtain lengths $\ell_1$, $\ell_2$ and $\mathscr{L}$, which measure how far a tuple is from being linear, standard or algebraic. Their geometric meaning is the following: a tuple of length $\ell$ lives on a Fermat variety of dimension $\ell-2$ for $\ell$ even, and of dimension $\ell-1$ for $\ell$ odd, so reducing the length of a tuple amounts to descending to a Fermat variety of smaller dimension. Since the Hodge conjecture is known for Fermat fourfolds (by a theorem of Kang \cite{kang2016refined}), this reduces the Hodge conjecture for Fermat varieties to bound by $6$ the lengths of the exceptional tuples $\xi_q$ (\cref{prop:lengthredto6}).
 
We present two length reduction algorithms to tackle this problem. The first one, that we call the Aoki-Shioda algorithm, reduces a tuple by the generators of $D_d+S_d$ inside the module $R_d$ of a fixed degree $d$, it amounts to an integral linear programming problem and it provides upper bounds for $\ell_{2,q}(\xi_q)$. The second one, that we call the lifted Aoki-Shioda algorithm, improves it by incorporating the lift operation, which reduces the
length of a tuple at the cost of increasing the degree of the ambient Fermat variety, and provides upper bounds for $\ell_2(\xi_q)$. This is what happens, for instance, with the exceptional tuple of degree $35$, whose reduction is supported on a weighted Fermat fourfold of degree $70$ (see \cref{tab:lengths_of_representatives_for_exceptional_cycles_AS} and \cref{tab:lengths_of_representatives_for_exceptional_cycles_liftedAS}). The bounds produced by both algorithms are collected in the tables of \cref{sec:4.3}, and \cref{thm1} is a direct consequence of them.
 
In order to explain the lift operation, and to explain the geometric origin of all the operations that we perform on Hodge characters, we decided to give a self-contained exposition of all the elementary results we use, citing without
proof only the deeper ones. This is why we expect that the text can also serve as a quick introduction to the Hodge conjecture on Fermat varieties.
 
Let us finally mention what our tables leave open. The Fermat variety of smallest degree and dimension for which the Hodge conjecture remains open is $X^6_{44}$, and our computations suggest that the conjecture for Fermat sixfolds of any degree is equivalent to the algebraicity of the four exceptional tuples $\xi_{44}$, $\xi_{51}$, $\xi_{52}$ and $\xi_{68}$. More generally, our bounds suggest that the lengths of the exceptional tuples converge to infinity with the degree (\cref{conj1}), which would reduce the Hodge conjecture in each fixed dimension to a finite number of degrees.
 
The article is organized as follows. In \cref{sec:2} we describe the space of Hodge cycles of a weighted Fermat variety in terms of characters. After recalling the Hodge structure of a projective orbifold and the Griffiths basis of a quasi-smooth hypersurface of a weighted projective space, we compute the periods of the spectral basis in terms of the Beta function and we determine the Galois action on it, which yields the weighted version of Shioda's description of Hodge cycles. In \cref{sec:3} we introduce the four operations on characters: lift and permutation, coming from pull-back along finite maps, and join and deletion, coming from the inductive structure of Fermat varieties. We prove that all of them preserve both the Hodge and the algebraic property, and we recall the two known families of algebraic characters, the linear ones of Ran and Shioda and the standard ones of Aoki. \cref{sec:4} is devoted to the formal module of tuples. We recall Aoki's generation theorem for the exceptional tuples, we introduce the several lengths, we prove the reduction of the Hodge conjecture to a bound by $6$, and we describe the two length reduction algorithms together with the tables they produce, from which our main theorem follows.
 
\subsection*{Acknowledgements}
The fourth author was supported by Fondecyt ANID regular grant 1240101 and Fondecyt ANID initiation grant 11251404.

%% file: sections/s02.tex
In this section we describe the space of Hodge cycles in weighted Fermat varieties in terms of the so called Griffiths basis and the weighted version of Shioda's description of Hodge cycles in terms of characters \cite{sh79}. Let us begin by introducing these varieties.

Let $n\in \N$. Given a vector $m=(m_0,\ldots,m_{n+1})\in \N^{n+2}$, we say that the weighted Fermat variety of weight $m$ is the degree $d:={\rm lcm}(m)$ quasi-smooth hypersurface
$$
X^n_m:=\{x\in\P^v: x_0^{m_0}+\cdots+x_{n+1}^{m_{n+1}}=0\}
$$
of the weighted projective space $\P^v$ with weight vector $v=(v_0,\ldots,v_{n+1})$ given by $v_i=\frac{d}{m_i}$ for every $i=0,\ldots,n+1$. In the classical homogeneous case, where $m_0=\cdots=m_{n+1}=d$, we simply denote it by
$$
X^n_d:=\{x\in\P^{n+1}:x_0^d+\cdots+x_{n+1}^d=0\}.
$$
In spite that in general these varieties ($X^n_m$ and $\P^v$) might be singular, they are always projective orbifolds (or $V$-manifolds) and so they have a natural pure Hodge structure in their rational cohomology groups (c.f. \cite[Section 2.5]{SP2008}). We briefly recall this structure in the following subsection.

\subsection{Hodge cycles in projective orbifolds}

Let $X$ be an orbifold. Let $i:X^{sm}\hookrightarrow X$ be the inclusion of the smooth locus of $X$, and let $\widetilde{\Omega}_X^p:=i_*\Omega_{X^{sm}}^p$. The complex $(\widetilde{\Omega}_X^\bullet,d)$ is a resolution of the constant sheaf $\C$ on $X$ and so we have the spectral sequence associated to the naive filtration
$$
E^{p,q}_1=H^q(X,\widetilde{\Omega}_X^p)\Rightarrow \mathbb{H}^{k}(X,\widetilde{\Omega}_X^\bullet)=H^{k}(X,\C) .
$$
For $X$ a  projective variety, the spectral sequence degenerates at $E_1$ (c.f. \cite{st77}) and moreover determines a Hodge structure on $H^k(X,\C)$. Another way to describe this Hodge structure is by considering any resolution of singularities $\phi:M\rightarrow X$ and taking the Hodge structure on $H^k(X,\Q)$ which makes $\phi^*:H^k(X,\Q)\hookrightarrow H^k(M,\Q)$ a morphism of Hodge structures, i.e. 
$$
H^{p,q}(X):=H^{k}(X,\C)\cap (\phi^*)^{-1}(H^{p,q}(M))
$$
for $p+q=k$. This structure is independent of the resolution and satisfies
$$
H^{p,q}(X)\simeq H^q(X,\widetilde{\Omega}_X^p) .
$$

\begin{defi}
Let $X$ be a projective orbifold. A rational class $\eta\in H^{2p}(X,\Q)$ is called a Hodge cycle if $\eta\in H^{p,p}(X)$. This is equivalent to say that $\eta\in F^pH^{2p}(X,\C)= \mathbb{H}^{2p}(X,\widetilde{\Omega}_X^{\bullet\ge p})$.
\end{defi}

It is natural to extend the Hodge conjecture to this context as follows. Given any codimension $p$ algebraic subvariety $Z\subseteq X$, it defines (by triangulation) a homology cycle $\delta_Z\in H_{2n-2p}(X,\Q)$. Since $X$ is rationally smooth, it satisfies the Poincar\'e duality with $\Q$-coefficients (see \cite[Definition 11.4.3, Example 11.4.4, Section 12.4]{cox2011toric})
$$
H_{2n-2p}(X,\Q)\simeq H^{2n-2p}(X,\Q)^\vee\simeq H^{2p}(X,\Q) .
$$
Hence, we get a class $[Z]\in H^{2p}(X,\Q)$. 

\begin{defi}
A rational class $\eta\in H^{2p}(X,\Q)$ is called an algebraic cycle if it is of the form $\eta=\sum_{i=1}^kq_i[Z_i]$ for some $q_i\in\Q$ and a collection $Z_i\subseteq X$ of codimension $p$ algebraic subvarieties. We denote the space of algebraic cycles by $H^{2p}(X,\Q)_\text{alg}$.   
\end{defi}

It is easy to see that in this context every algebraic cycle is a Hodge cycle. In fact, given $Z\subseteq X$ a codimension $p$ algebraic subvariety, let us denote by $W\subseteq M$ its strict transform under $\phi$. Then $\phi_*\delta_W=\delta_Z$ and so for every class $\mu\in H^{r,s}(X)$ with $r+s=2n-2p$ and $r\neq s$
$$
\text{Tr}([Z]\cup \mu)=\mu(\delta_Z)=\mu(\phi_*\delta_W)=\phi^*\mu(\delta_W)=\text{Tr}([W]\cup\phi^*\mu)=0 \,
$$
thus $[Z]\in H^{p,p}(X)$ is a Hodge cycle. In this way we extend the Hodge conjecture to $X$ by asking whether every Hodge cycle of $X$ is an algebraic cycle.

\begin{rem}
Let $f:X\rightarrow Y$ be a finite map between projective orbifolds. It induces in homology a direct image map
$$
f_*:H_{2n-2p}(X,\Z)\to H_{2n-2p}(Y,\Z) ,
$$
while the pre-image induces a pull-back map
$$
f^*:H_{2n-2p}(Y,\Z)\to H_{2n-2p}(X,\Z) ,
$$
such that $f_*f^*=\deg(f)\cdot Id_{H_{2n-2p}(Y,\Z)}$. Considering these maps with $\Q$ coefficients, dualizing, and using the Poincar\'e duality we obtain a pull-back and a push-forward map in cohomology
$$
f^*:H^{2p}(Y,\Q)\to H^{2p}(X,\Q) ,
$$
$$
f_*:H^{2p}(X,\Q)\to H^{2p}(Y,\Q) ,
$$
such that $f_*f^*=\deg(f)\cdot Id_{H^{2p}(Y,\Q)}$. In particular, $f^*$ is injective, and so we can identify $$H^{2p}(Y,\Q)\subseteq H^{2p}(X,\Q)$$ as a sub-Hodge structure via $f^*$. Hence
$$
H^{2p}(Y,\Q)\cap H^{p,p}(Y)=H^{2p}(X,\Q)\cap H^{p,p}(X)\cap H^{2p}(Y,\C) .
$$
\end{rem}

\begin{prop}\label{propalgcycfinitemap}
Let $f:X\to Y$ be a finite map between projective orbifolds. For every algebraic cycle $W\in \text{CH}^p(X)$ and $Z\in \text{CH}^p(Y)$
$$
f^*[Z]=\deg(f)\cdot [f^{-1}(Z)] \ \ \text{ and } \ \ f_*[W]=\frac{1}{\deg(f)}[f(W)] .
$$
In consequence, 
$$
H^{2p}(Y,\Q)_\text{alg}=H^{2p}(X,\Q)_\text{alg}\cap H^{2p}(Y,\C) .
$$
In particular, if $X$ satisfies the Hodge conjecture, then $Y$ also does.
\end{prop}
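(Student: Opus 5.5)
The plan is to prove the two formulas by identifying each cohomology class through the Poincar\'e duality pairing with $H^{2n-2p}$, and then to derive the ``in consequence'' statement and the Hodge-conjecture statement formally from $f_*f^*=\deg(f)\cdot Id$.

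\textbf{First formula.} By definition $[Z]$ is the Poincar\'e dual of $\delta_Z$, so it suffices to compare $f^*[Z]$ and $[f^{-1}(Z)]$ by pairing against $H^{2n-2p}(X,\Q)$.

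\begin{itemize}
\item The cohomological $f^*$ was defined by dualizing the homological preimage map $f^*:H_{2n-2p}(Y)\to H_{2n-2p}(X)$ through Poincar\'e duality. So I would first check that the homological preimage of $\delta_Z$ is $\delta_{f^{-1}(Z)}$, where $f^{-1}(Z)$ is taken as a cycle with its natural multiplicities.
\item For this, choose a Zariski dense open $U\subseteq Y$ with $f$ étale over $U$, both $X$ and $Y$ smooth there, and $Z\cap U$ dense in $Z$. Over $U$ the preimage of a triangulation of $Z$ is a triangulation of $f^{-1}(Z)$. The complement has real codimension at least $2$ in the relevant cycles, so it does not affect the homology classes.
\item Then I would track the normalization constant coming from how the paper's dualization interacts with the trace pairing. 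The key identity is
$$
\text{Tr}_X(f^*a\cup f^*b)=\deg(f)\,\text{Tr}_Y(a\cup b).
$$
Combined with the duality isomorphisms, this identity produces exactly the factor $\deg(f)$ in $f^*[Z]=\deg(f)\cdot[f^{-1}(Z)]$.
\end{itemize}

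\textbf{Second formula.} The same argument applies to $W$. The homological direct image of $\delta_W$ is $\deg(W\to f(W))\,\delta_{f(W)}$, or $0$ if $\dim f(W)<\dim W$; the latter cannot happen since $f$ is finite. The paper's normalization, in which $f_*$ on cohomology is built so that $f_*f^*=\deg(f)$, then gives the stated constant. I expect this step to be the main obstacle: the delicate point is making the multiplicities and normalizing constants consistent with the conventions chosen in the preceding remark. The qualitative content that matters afterwards is only that $f^*$ and $f_*$ send algebraic classes to rational multiples of algebraic classes. So I would first establish this weaker statement cleanly, and then pin down the constants by evaluating on a point class and on the fundamental class.

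\textbf{The equality $H^{2p}(Y,\Q)_\text{alg}=H^{2p}(X,\Q)_\text{alg}\cap H^{2p}(Y,\C)$.} Here $H^{2p}(Y)$ is identified with its image under $f^*$.
\begin{itemize}
\item For the inclusion ``$\subseteq$'': if $\eta=\sum q_i[Z_i]$, then $f^*\eta=\deg(f)\sum q_i[f^{-1}(Z_i)]$ is algebraic on $X$.
\item For the inclusion ``$\supseteq$'': suppose $f^*\eta'$ is algebraic on $X$, say $f^*\eta'=\sum q_j[W_j]$. Apply $f_*$ and use $f_*f^*=\deg(f)\cdot Id$ to get
$$
\eta'=\frac{1}{\deg(f)}\sum_j q_j\, f_*[W_j].
$$
This is algebraic on $Y$ by the second formula.
\end{itemize}

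\textbf{The Hodge conjecture for $Y$.} Let $\eta$ be a Hodge cycle on $Y$. By the preceding remark, $f^*\eta$ is a Hodge cycle on $X$. If $X$ satisfies the Hodge conjecture, then $f^*\eta$ is algebraic, and by the equality just proved $\eta$ is algebraic on $Y$.
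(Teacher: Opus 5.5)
\textbf{There is a genuine gap in the constants.} Your skeleton is the paper's: test classes against $H^{2n-2p}$ via $\text{Tr}$, identify the homological preimage and image of fundamental cycles, use $f_*f^*=\deg(f)\cdot\mathrm{Id}$, and then deduce $H^{2p}(Y,\Q)_{\mathrm{alg}}=H^{2p}(X,\Q)_{\mathrm{alg}}\cap H^{2p}(Y,\C)$ and the Hodge conjecture for $Y$. The paper leaves those two deductions implicit, and yours are correct. The constants, however, are not bookkeeping.

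Your claim that $\text{Tr}_X(f^*a\cup f^*b)=\deg(f)\,\text{Tr}_Y(a\cup b)$ ``produces exactly the factor $\deg(f)$'' is wrong. If $f^*$ on cohomology is Poincar\'e dual to the homological preimage, your first bullet already gives $f^*[Z]=[f^{-1}(Z)]$ with factor $1$. The trace identity only confirms this: $\text{Tr}_X([f^{-1}(Z)]\cup f^*\mu)=\mu(f_*\delta_{f^{-1}(Z)})=\deg(f)\,\mu(\delta_Z)=\text{Tr}_X(f^*[Z]\cup f^*\mu)$. Likewise, your correct computation $f_*\delta_W=\deg(W\to f(W))\,\delta_{f(W)}$ gives $f_*[W]=\deg(W\to f(W))\,[f(W)]$, so no uniform constant $\deg(f)^{-1}$ is possible.

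The fundamental-class test you propose exposes this. For $p=0$, $Z=Y$, $W=X$, the two displayed formulas give $f_*f^*[Y]=\deg(f)\cdot\deg(f)^{-1}[Y]=[Y]$, contradicting $f_*f^*=\deg(f)\cdot\mathrm{Id}$ when $\deg(f)>1$. Similarly, for $W=f^{-1}(Z)$ irreducible and reduced, the first formula forces $f_*[W]=[Z]$ while the second gives $\deg(f)^{-1}[Z]$.

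In the paper, the factors $\deg(f)^{\pm 1}$ enter through the opening step of each chain, $\text{Tr}_X(a\cup b)=\text{Tr}_Y(f_*a\cup f_*b)$ with $a$ (resp.\ $b$) in the image of $f^*$. By the projection formula $\text{Tr}_Y(c\cup f_*b)=\text{Tr}_X(f^*c\cup b)$, this step is off by exactly $\deg(f)$. The paper also writes $f_*\eta_W=\eta_{f(W)}$, dropping the multiplicity you kept. So the literal constants hold only if one reads $f^{-1}(Z)$ and $f(W)$ as cycles with multiplicities and rescales $f^*$ by $\deg(f)$ and $f_*$ by $\deg(f)^{-1}$. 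Your route cannot produce them otherwise, and you should state the formulas with the correct constants instead.

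What the paper uses afterwards (lifts, permutations) is only that $f^*[Z]$ and $f_*[W]$ are rational combinations of classes of subvarieties. So your fallback ``weaker statement'' is the right thing to prove, and with it your two deductions are complete.

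That weaker statement still needs one repair. A dense open $U\subseteq Y$ over which $f$ is \'etale, $X$ and $Y$ are smooth, and which meets $Z$ densely need not exist, because $Z$ may lie in the branch locus or the singular locus. For the Fermat coverings $x\mapsto x^k$, the branch locus is the union of the sections $\{x_i=0\}$, and these can contain $Z$. Instead, use that the homological preimage is compatible with supports: it comes from pulling back $H^{2p}(Y,Y\setminus Z)$ to $H^{2p}(X,X\setminus f^{-1}(Z))$. Hence $f^*\delta_Z$ lies in the top Borel--Moore homology of $f^{-1}(Z)$, which is spanned by the fundamental classes of its components of the same dimension as $Z$. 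Similarly, $f_*\delta_W$ is supported on the irreducible variety $f(W)$ of the same dimension, hence is a rational multiple of $\delta_{f(W)}$.
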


\begin{proof}
For the first equality, let $\omega\in H^{2n-2p}(X,\Q)$, then
\begin{eqnarray*}
\text{Tr}(f^*[Z]\cup \omega) &= &\text{Tr}(f_*f^*[Z]\cup f_*\omega)= \deg(f)\cdot\text{Tr}([Z]\cup f_*\omega)
\\
&=& {\deg(f)}\cdot f_*\omega(\eta_Z) {\deg(f)}\cdot\omega(f^*\eta_{Z})\\
&=& {\deg(f)}\cdot\omega(\eta_{f^{-1}(Z)})={\deg(f)}\cdot\text{Tr}([f^{-1}(Z)]\cup\omega) .
\end{eqnarray*}
For the second let $\mu\in H^{2n-2p}(Y,\Q)$, then 
\begin{eqnarray*}
\text{Tr}(f_*[W]\cup \mu) &=& \frac{1}{\deg(f)}\text{Tr}(f_*[W]\cup f_*f^*\mu)=\frac{1}{\deg(f)}\text{Tr}([W]\cup f^*\mu)
\\
&=& \frac{1}{\deg(f)}f^*\mu(\eta_W)=\frac{1}{\deg(f)}\mu(f_*\eta_W)\\
&=& \frac{1}{\deg(f)}\mu(\eta_{f(W)})=\frac{1}{\deg(f)}\text{Tr}([f(W)]\cup\mu) .
\end{eqnarray*}
\end{proof}

An immediate consequence of the above result is the following.

\begin{coro}
If the Hodge conjecture holds for the weighted Fermat variety $X^n_m$, then it also holds for all weighted Fermat varieties $X^n_{m'}$ with $m'\mid m$ (by this we mean $m_i'\mid m_i$ for all $i=0,\ldots,n+1$).
\end{coro}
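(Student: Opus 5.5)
The plan is to produce a finite surjective map $f:X^n_m\to X^n_{m'}$ between projective orbifolds and then apply \cref{propalgcycfinitemap} with $X=X^n_m$ and $Y=X^n_{m'}$. That proposition says that if $X$ satisfies the Hodge conjecture, so does $Y$, which is exactly the claim.

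\textbf{Construction of the map.} Write $d=\operatorname{lcm}(m)$ and $d'=\operatorname{lcm}(m')$, with weights $v_i=d/m_i$ and $v'_i=d'/m'_i$. Put $k_i=m_i/m'_i\in\N$. The natural candidate is
$$
f(x_0:\cdots:x_{n+1})=(x_0^{k_0}:\cdots:x_{n+1}^{k_{n+1}}),
$$
because $(x_i^{k_i})^{m'_i}=x_i^{m_i}$. Hence the image of a point of $X^n_m$ satisfies the equation of $X^n_{m'}$. I will carry out the argument in three steps.

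\textbf{Step 1: $f$ is well defined between the weighted projective spaces.} For $\lambda\in\C^*$, scaling $x_i\mapsto\lambda^{v_i}x_i$ sends $x_i^{k_i}$ to $\lambda^{v_ik_i}x_i^{k_i}$, and $v_ik_i=d/m'_i$. We need this to be a rescaling in $\P^{v'}$, i.e. $d/m'_i=c\,v'_i$ for a common $c$. Taking $c=d/d'$ works, and $c$ is an integer because every $m'_i$ divides $m_i$, which divides $d$, so $d'\mid d$. Then $\lambda^{v_ik_i}=(\lambda^{c})^{v'_i}$, so $f$ descends to a morphism $\P^v\to\P^{v'}$. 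It is defined everywhere, since all $x_i^{k_i}=0$ forces $x=0$.

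\textbf{Step 2: $f$ is finite and surjective.} On affine cones $f$ is the map $\C^{n+2}\to\C^{n+2}$, $x_i\mapsto x_i^{k_i}$. This map is finite and surjective, and its fibres are orbits of $\prod_i\mu_{k_i}$. Finiteness descends to the quotients by $\C^*$, and restricting to the preimage of $X^n_{m'}$, which is exactly $X^n_m$, gives a finite surjective map $X^n_m\to X^n_{m'}$. Both varieties are projective orbifolds, as recalled at the start of this section. So \cref{propalgcycfinitemap} applies and yields the corollary.

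The main point needing care is Step 1, specifically the compatibility of the weights and the divisibility $d'\mid d$. This is handled by the computation above; everything else is formal.
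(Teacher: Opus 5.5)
Your proposal is correct and is the paper's proof: the paper takes the same map $f(x_0:\cdots:x_{n+1})=(x_0^{m_0/m_0'}:\cdots:x_{n+1}^{m_{n+1}/m_{n+1}'})$ and applies \cref{propalgcycfinitemap}. Your Steps 1 and 2 only add the checks the paper leaves implicit: well-definedness via $d'\mid d$ and $v_ik_i=(d/d')v_i'$, finiteness, and surjectivity with $f^{-1}(X^n_{m'})=X^n_m$.
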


\begin{proof}
Take the finite map $f:X^n_m\to X^n_{m'}$ given by $$f(x_0:\cdots:x_{n+1})=[x_0^\frac{m_0}{m_0'}:\cdots:x_{n+1}^\frac{m_{n+1}}{m_{n+1}'}] .$$
\end{proof}

\begin{rem}
In particular, if $\text{lcm}(m)=d$, then the Hodge conjecture for $X^n_d$ implies the Hodge conjecture for $X^n_m$.
\end{rem}

\subsection{Hodge structure of quasi-smooth weighted hypersurfaces}

Suppose now that $X=\{F=0\}\subseteq\P^v$ is a quasi-smooth degree $d$ hypersurface, given by $F\in S(\P^v)=\C[x_0,\ldots,x_{n+1}]$ where $\deg(x_i)=v_i$ and $\deg(F)=d$. It is a well-known fact that the Gysin map
$$
H^k(X,\Q)\xrightarrow{i_!} H^{k+2}(\P^v,\Q)
$$
is an isomorphism for $k>\dim(X)$ (see for instance \cite[Proposition 4.1]{Villaflor2024toric}), and that 
$$
H^k(\P^v,\Q)=\begin{cases}
    \Q\cdot [Z] & \text{if $k$ is even and $Z\subseteq\P^v$ is any codimension $\frac{k}{2}$ subvariety}, \\
    0 & \text{otherwise}.
\end{cases}
$$
Furthermore, it is known that $X$ satisfies the Hard Lefschetz theorem \cite[Theorem 12.5.8]{cox2011toric} and so, the only non-trivial Hodge structure of $X$ lies in its middle cohomology $H^n(X,\Q)$.

\begin{defi}
We define the middle primitive cohomology group of $X$ by
$$
H^n(X,\Q)_\prim:=\ker(i_!:H^n(X,\Q)\rightarrow H^{n+2}(\P^v,\Q)).
$$
We also define $H^n(X,K)_\prim:=H^n(X,\Q)_0\otimes_\Q K$ for any field extension $K/\Q$ and $H^{p,q}(X)_\prim:=H^{p,q}(X)\cap H^n(X,\C)_\prim$ for $p+q=n$.
\end{defi}

For quasi-smooth weighted hypersurfaces, the middle cohomology splits as follows
$$
H^n(X,\Q)=H^n(X,\Q)_\prim\oplus H^n(\P^v,\Q) .
$$
This splitting follows easily from the following diagram
\begin{center}
\begin{tikzpicture}[xscale=1.5,yscale=1]

\path       
      node   (m22) at (0,2) {$H^{n}(X,\Q)$} 
      node   (m23) at (2,2) {$H^{n+2}(\P^v,\Q)$} 
      node   (m33) at (1,0.5) {$H^{n}(\P^v,\Q)$};
      { 
      \draw[->]   (m33)  edge node[left] {$i^*$} (m22);
      \draw[->]   (m33)  edge node[right] {$L$} (m23);
      \draw[->]   (m22)  edge node[above] {$i_!$} (m23);
      
       }
\end{tikzpicture}
\end{center}
where $L$ is the Hard Lefschetz isomorphism. Hence, one can reduce the Hodge conjecture to the primitive Hodge cycles of hypersurfaces of even dimension $n$. The following result, due to Steenbrink \cite{st77} and to Batyrev-Cox in the toric setting \cite[Theorem 10.13]{batyrev1994hodge}, is the natural generalization of the so called Griffiths basis \cite[Corollary 6.12]{vo03} for quasi-smooth hypersurfaces of $\P^v$. It tells us how to compute each piece of the Hodge structure of $H^n(X,\C)_\prim$ in terms of the Jacobian ring of $X$, given by $$
R^F:=\C[x_0,\ldots,x_{n+1}]/J^F,\ \hbox{ where }
J^F=( \frac{\partial F}{\partial x_0},\ldots,\frac{\partial F}{\partial x_{n+1}})
$$
is the Jacobian ideal.

\begin{theo}[Steenbrink, Batyrev-Cox]
\label{thmbatycox} Let $X=\{F=0\}\subseteq \P^v$ be a quasi-smooth hypersurface with $\deg(F)=d$. Then for $p\neq \frac{n-1}{2}$ we have an isomorphism
$$
R^F_{d(q+1)-\sum_{i=0}^{n+1}v_i}\simeq H^{p,q}(X)_\prim
$$
$$
P\mapsto \text{res}\left(\frac{P\Omega}{F^{q+1}}\right)^{p,q}
$$
where $p+q=n$ and $\Omega\in H^0(\P^v,\widetilde{\Omega}_{\P^v}^{n+1}(\sum_{i=0}^{n+1}v_i))$ is the canonical generator given by $\Omega=\sum_{i=0}^{n+1}(-1)^iv_ix_idx_0\wedge\cdots\widehat{dx_i}\cdots\wedge dx_{n+1}$.
\end{theo}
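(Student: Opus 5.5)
We would deduce the statement from the classical Griffiths basis theorem in the smooth homogeneous case, applied on a finite quotient cover. The map
\[
\pi:\P^{n+1}\to\P^v,\qquad \pi(y_0:\cdots:y_{n+1})=(y_0^{v_0}:\cdots:y_{n+1}^{v_{n+1}})
\]
presents $\P^v$ as the quotient of $\P^{n+1}$ by $G=\prod_i\mu_{v_i}$. The approach is to write $\widetilde X=\pi^{-1}(X)=\{\widetilde F=0\}$, where $\widetilde F(y)=F(y_0^{v_0},\ldots,y_{n+1}^{v_{n+1}})$ has degree $d$ in the standard grading. Quasi-smoothness of $X$ says precisely that the affine cone of $X$ is smooth away from the origin. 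This property is invariant under the finite étale-off-coordinate-hyperplanes change of variables only after an argument, so we first check that $\widetilde X$ is smooth, or else replace it by a resolution. In the weighted Fermat case, which is all the paper needs, $\widetilde X=X^n_d$ is smooth, so we may assume this.

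Since $X=\widetilde X/G$ is a quotient by a finite group, $H^n(X,\Q)\cong H^n(\widetilde X,\Q)^G$ as Hodge structures, using the pull-back identification of the preceding Remark and \cref{propalgcycfinitemap}. Primitivity is also preserved, because $i_!$ commutes with $\pi^*$. Hence
\[
H^{p,q}(X)_\prim=H^{p,q}(\widetilde X)_\prim^G .
\]
We then invoke the Griffiths basis for the smooth hypersurface $\widetilde X\subseteq\P^{n+1}$:
\[
R^{\widetilde F}_{d(q+1)-n-2}\simeq H^{p,q}(\widetilde X)_\prim,\qquad \widetilde P\mapsto \operatorname{res}\bigl(\widetilde P\widetilde\Omega/\widetilde F^{q+1}\bigr)^{p,q}.
\]
This map is $G$-equivariant, provided $G$ acts on $\widetilde\Omega=\sum(-1)^iy_i\,dy_0\wedge\cdots\widehat{dy_i}\cdots\wedge dy_{n+1}$ through the character $\prod y_i$. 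We therefore take $G$-invariants on both sides.

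The key computation is
\[
\pi^*\Omega=\Bigl(\prod_i v_i y_i^{v_i-1}\Bigr)\widetilde\Omega .
\]
So for $P\in\C[x]_{d(q+1)-\sum v_i}$ we get
\[
\pi^*\bigl(P\Omega/F^{q+1}\bigr)=\Bigl(\prod v_i\Bigr)\,\frac{\pi^*P\cdot\prod y_i^{v_i-1}\,\widetilde\Omega}{\widetilde F^{q+1}} .
\]
Next we identify the $G$-invariant part of $R^{\widetilde F}_{d(q+1)-n-2}$, i.e. the part on which the residue forms are invariant. It should be exactly $\prod y_i^{v_i-1}\cdot\pi^*(R^F_{d(q+1)-\sum v_i})$. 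There are two reasons for this. First, a monomial $y^\beta$ gives an invariant form iff $\beta_i\equiv v_i-1 \pmod{v_i}$ for all $i$. Second, since $\partial\widetilde F/\partial y_i=v_iy_i^{v_i-1}\pi^*(\partial F/\partial x_i)$, the invariant part of $J^{\widetilde F}$ corresponds to $J^F$ under $P\mapsto\prod y_i^{v_i-1}\pi^*P$.

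The main obstacle is this last identification: showing that the invariant part of $J^{\widetilde F}$ is exactly the image of $J^F$, and not larger. We would handle it by averaging over $G$, i.e. applying the Reynolds operator to an expression $\sum A_i\,\partial\widetilde F/\partial y_i$. This reduces each $A_i$ to its isotypic component with exponents $\equiv 0$ off index $i$ and $\equiv v_i-1$ modulo the adjustment at $i$, which lies in $\pi^*\C[x]\cdot\prod_{j\neq i}y_j^{v_j-1}$. The case $p=\frac{n-1}{2}$ is excluded only to avoid the non-primitive correction in odd degree, and needs no separate treatment.
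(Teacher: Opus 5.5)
The paper gives no proof of this theorem; it cites Steenbrink and Batyrev--Cox. Your covering argument is a different route, and it has a genuine gap for the statement as given.

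\textbf{The gap: smoothness of $\widetilde X=\pi^{-1}(X)$.} You defer this (``only after an argument \ldots\ or else replace it by a resolution''), but quasi-smoothness is simply not preserved under $\pi$. Since $\partial\widetilde F/\partial y_i=v_iy_i^{v_i-1}\,\pi^*(\partial F/\partial x_i)$, every partial with $v_i>1$ vanishes identically on $\{y_i=0\}$. Hence $\widetilde X$ is singular at any $y\neq 0$ for which every nonvanishing $\partial F/\partial x_j$ at $y^v$ has $v_j>1$ and $y_j=0$. For instance, $F=x_0^2x_2+x_1^4+x_2^2+x_3^4$ on $\mathbb{P}(1,1,2,1)$ is quasi-smooth: its gradient $(2x_0x_2,4x_1^3,x_0^2+2x_2,4x_3^3)$ vanishes only at the origin. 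But $\widetilde F=y_0^2y_2^2+y_1^4+y_2^4+y_3^4$ is singular at $(1:0:0:0)$.

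The fallback to a resolution does not rescue the argument. A resolution of $\widetilde X$ is not a hypersurface of $\mathbb{P}^{n+1}$, so there is no Jacobian ring and no Griffiths isomorphism whose $G$-invariants you could take. Its $G$-invariant cohomology also need not equal $H^n(X,\mathbb{Q})$, since exceptional classes survive. As written, your proof establishes the theorem only when $F(y_0^{v_0},\ldots,y_{n+1}^{v_{n+1}})$ is smooth, not for an arbitrary quasi-smooth $X$.

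\textbf{What is correct.} Within that range the argument is sound.
\begin{itemize}
\item The identity $\pi^*\Omega=(\prod_iv_iy_i^{v_i-1})\widetilde\Omega$ is right.
\item The map $P\mapsto\prod_iy_i^{v_i-1}\pi^*P$ is an isomorphism of $\mathbb{C}[x]_{d(q+1)-\sum v_i}$ onto the span of the monomials $y^\beta$ with $\beta_i\equiv v_i-1\pmod{v_i}$.
\item The Reynolds argument does show that the twisted-invariant part of $J^{\widetilde F}$ is exactly the image of $J^F$, and injectivity then gives $P\in J^F$.
\end{itemize}
One slip in that step: the surviving part of the coefficient of $\partial\widetilde F/\partial y_i$ has exponents $\equiv v_j-1$ for $j\neq i$ and $\equiv 0$ at $i$, not the other way round. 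Your conclusion that it lies in $\pi^*\mathbb{C}[x]\cdot\prod_{j\neq i}y_j^{v_j-1}$ is nonetheless the correct one.

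For a weighted Fermat variety, $\widetilde X=X^n_d$ is smooth. So your argument covers the only case the paper actually uses, in \cref{thmweightedshioda1}.

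\textbf{What the general case needs.} An argument intrinsic to $\mathbb{P}^v$, which is what the cited references supply. For example:
\begin{itemize}
\item run Griffiths' pole-order reduction with the sheaves $\widetilde\Omega^p_{\mathbb{P}^v}$;
\item use the Bott--Steenbrink--Danilov vanishing $H^i(\mathbb{P}^v,\widetilde\Omega^p_{\mathbb{P}^v}(k))=0$ for $i,k>0$;
\item use the fact that quasi-smoothness says exactly that $\partial F/\partial x_0,\ldots,\partial F/\partial x_{n+1}$ form a regular sequence.
\end{itemize}

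Finally, your comment on $p=\frac{n-1}{2}$ is off: for odd $n$ there is no non-primitive part of $H^n$. This is harmless, because with primitive $H^{p,q}(X)$ your argument treats all $p$ uniformly.
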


\begin{rem}
The residue map can be described by means of the following quasi-isomorphism. Let $X$ be a projective orbifold and $i:Y\hookrightarrow X$ be an ample quasi-smooth divisor. Denote by $U:=X\setminus Y$ and by $j:U\hookrightarrow X$ the inclusion. Then
$$
\widetilde{\Omega}_X^\bullet(\log Y)\hookrightarrow j_*\widetilde{\Omega}_U^\bullet=\widetilde{\Omega}_X(*Y)
$$
is a quasi-isomorphism, and so 
$$
\mathbb{H}^k(X,\widetilde{\Omega}_X^\bullet(\log Y))\simeq \mathbb{H}^k(U,\widetilde{\Omega}_U^\bullet)\simeq H^k(U,\C) .
$$
Using the above isomorphisms, the residue map $\text{res}: H^{k}(U,\C)\rightarrow H^{k-1}(Y,\C)$ is induced in hypercohomology by the usual Poincar\'e residue sequence
\begin{equation}
\label{eqPoincareResSeq}
0\rightarrow \widetilde{\Omega}_X^\bullet\rightarrow \widetilde{\Omega}_X^\bullet(\log Y)\xrightarrow{\text{res}}i_*\widetilde{\Omega}_Y^{\bullet-1}\rightarrow 0 ,
\end{equation}
hence it fits in the long exact sequence
\begin{equation}
\label{eqLerayTGC}
\cdots\rightarrow H^k(X,\C)\xrightarrow{j^*} H^k(U,\C)\xrightarrow{\text{res}}H^{k-1}(Y,\C)\xrightarrow{i_!} H^{k+1}(X,\C)\rightarrow\cdots  .
\end{equation}
In the context of the previous theorem, i.e. for $X=\{F=0\}\subseteq\P^v$, we let $U:=\P^v\setminus X$ and the above exact sequence shows that the residue map is an isomorphism to the primitive cohomology
\begin{equation}
\label{eqResisoC}
\text{res}:H^{n+1}(U,\C)\xrightarrow{\sim}H^n(X,\C)_\prim  .\end{equation}
Moreover, since $X$ is ample, the complex $\widetilde{\Omega}_{\P^v}(*X)$ is acyclic and so
$$
H^{n+1}(U,\C)\simeq \mathbb{H}^{n+1}(\P^v,\widetilde{\Omega}_{\P^v}(*X))\simeq H^{n+1}(\Gamma(\widetilde{\Omega}_{\P^v}^\bullet(*X)),d) .
$$
This explains why the terms of $H^{n+1}(U,\C)$ can be represented by $(n+1)$-forms with poles along $X=\{F=0\}$. 
\end{rem}

\begin{rem}\label{remLTGZ}
The residue isomorphism \eqref{eqResisoC} is dual to the tube mapping $\tau_\varepsilon:H_n(X,\Z)\to H_{n+1}(U,\Z)$ in singular homology, see \cite[Section 4.6]{ho13}. Furthermore, the relation
$$
\int_{\tau_\varepsilon(\delta)}\omega=2\pi i\int_\delta\text{res}(\omega)
$$
implies that the isomorphism
\begin{equation}
2\pi i\cdot \text{res}:H^{n+1}(U,K)\xrightarrow{\sim}H^n(X,K)_\prim 
\end{equation}
holds over any field extension $K/\Q$. Using the notation $K(p):=(2\pi i)^p\cdot K$, the above is equivalent to 
\begin{equation}
\label{eqResisoK}
\text{res}:H^{n+1}(U,K(p))\xrightarrow{\sim}H^n(X,K(p-1))_\prim. 
\end{equation}
\end{rem}

\subsection{Spectral decomposition}

Now that we have established the general framework to work with Hodge structures of weighted quasi-smooth hypersurfaces, let us return to weighted Fermat varieties. In this section we study their middle primitive cohomology in terms of characters, following Shioda's seminal work \cite{sh79}.

Let $n\in\N$, $m=(m_0,\ldots,m_{n+1})\in \N^{n+2}$, $d={\rm lcm}(m)$ and $v_i=d/m_i$ for $i=0,\ldots,n+1$. For any $k\in\N$ us denote by  $\mu_k:=\{ \zeta_{k}^i,\ | \ i\in\Z\},\ \ \zeta_k:=e^\frac{2\pi i}{k}$ the group of $k$-the roots of unity. The group $\mu_{m_0}\times\cdots\times \mu_{m_{n+1}}$ acts on $X^n_m$ diagonally by
$$
(\zeta_{m_0}^{i_0},\ldots,\zeta_{m_{n+1}}^{i_{n+1}})\cdot x:=(\zeta_{m_0}^{i_0}x_0:\cdots:\zeta_{m_{n+1}}^{i_{n+1}}x_{n+1})
$$
with kernel $\lambda\in\mu_d\hookrightarrow(\lambda^{v_0},\ldots,\lambda^{v_{n+1}})\in \mu_{m_0}\times\cdots\times\mu_{m_{n+1}}$. Hence we have a faithful action of the abelian group
$$
G^n_m:=(\mu_{m_0}\times\cdots\times\mu_{m_{n+1}})/\mu_d\curvearrowright X^n_m .
$$
Via pull-back we obtain a natural representation of $G^n_m$ in the middle primitive cohomology $H^n(X^n_m,\Q)_\prim$ which in turn induces a spectral decomposition of this space in terms of the characters 
$$
\widehat{G}^n_m=\hom(G^n_m,\C^\times)\cong\left\{\alpha=(a_0,\ldots,a_{n+1})\in  \prod_{i=0}^{n+1}(\Z/m_i\Z): \sum_{i=0}^{n+1}v_ia_i=0\in \Z/d\Z\right\}
$$
as follows
$$
H^n(X^n_m,\C)_\prim=\bigoplus_{\alpha\in\widehat{G}^n_m}V(\alpha)  ,
$$
where for each $\alpha\in \widehat{G}^n_m$
$$
V(\alpha)=\{\omega\in H^n(X^n_m,\C)_\prim: g^*\omega=\alpha(g)\cdot \omega \ , \ \forall g\in G^n_m\} .
$$
Since $\alpha(g)\in\Q(\zeta_d)$ for all $g\in G^n_m$, it follows that the spectral decomposition is in fact defined over $\Q(\zeta_d)$, hence 
$$
H^n(X^n_m,\Q(\zeta_d))_\prim=\bigoplus_{\alpha\in\widehat{G}^n_m}V(\alpha)_{\Q(\zeta_d)}
$$
for $V(\alpha)_{\Q(\zeta_d)}:=V(\alpha)\cap H^n(X^n_m,\Q(\zeta_d))_\prim$. In order to state the main result of this section let us denote for every $a\in\Z/\ell\Z$ its unique residual representative by
$$
\langle a\rangle\in\{0,1,\ldots,\ell-1\} ,
$$
and for a tuple $\alpha=(a_0,\ldots,a_{n+1})\in \widehat{G}^n_m$, let us set
$$
|\alpha|:=\sum_{i=0}^{n+1}v_i\langle a_i\rangle\in \Z_{\ge 0} .
$$
Consider the following set of characters
\begin{eqnarray}\label{25072026tortuga}
\mathfrak{A}^n_m &:=& \left\{\alpha=(a_0,\ldots,a_{n+1})\in\widehat{G}^n_m\ | \  \forall i,\ a_i\neq 0\right\}\\ \nonumber
&=&
\left\{\alpha=(a_0,\ldots,a_{n+1})\in  \prod_{i=0}^{n+1}(\Z/m_i\Z)\ | \  \forall i,\ a_i\neq 0,\ \sum_{i=0}^{n+1}v_ia_i=0\in \Z/d\Z\right\}.
\end{eqnarray}
\begin{theo}\label{thmweightedshioda1}

The following assertions hold:
\begin{itemize}
    \item[(i)] $V(\alpha)\neq 0$ if and only if $\alpha\in\mathfrak{A}^n_m$.
    \item[(ii)] For every $\alpha\in\mathfrak{A}^n_m$, $V(\alpha)$ is one dimensional,
    $$
V(\alpha)=\C\cdot \omega_\alpha  ,
    $$
    and it is generated by
    $$
\omega_\alpha:=\text{res}\left(\frac{x^\beta\Omega}{F^{q+1}}\right)^{p,q}\in H^{p,q}(X^n_m)_\prim  ,
    $$
    for $\beta=(\langle a_0\rangle-1,\ldots,\langle a_{n+1}\rangle-1)$ and $q=\frac{|\alpha|}{d}-1$.
    \item[(iii)] The spectral decomposition refines the Hodge decomposition as follows
    $$
H^{p,q}(X^n_m)_\prim=\bigoplus_{\alpha\in\mathfrak{A}^n_m:\  |\alpha|=d(q+1)}V(\alpha) .
    $$
\end{itemize}
\end{theo}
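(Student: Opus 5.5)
Proof proposal. The plan is to use the Steenbrink–Batyrev–Cox description of \cref{thmbatycox} for $F=x_0^{m_0}+\cdots+x_{n+1}^{m_{n+1}}$. Here the Jacobian ideal is $J^F=(x_0^{m_0-1},\ldots,x_{n+1}^{m_{n+1}-1})$. Hence $R^F$ has the monomial basis $x^\beta$ with $0\le\beta_i\le m_i-2$, and the graded pieces of $R^F$ are spanned by monomials. First I will compute how $G^n_m$ acts on the forms $\frac{x^\beta\Omega}{F^{q+1}}$. Since $F$ is invariant and $\Omega$ transforms by $\prod_i\zeta_{m_i}^{i_i}$, the element $g=(\zeta_{m_0}^{i_0},\ldots)$ acts on this form by $\prod_i\zeta_{m_i}^{i_i(\beta_i+1)}$. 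This is the character $\alpha$ with $a_i=\beta_i+1 \bmod m_i$, possibly up to inversion depending on the pull-back convention; I will fix conventions so that it matches the statement. One should check that it is indeed well defined on the quotient by $\mu_d$. Indeed, $\lambda$ acts on the form by $\lambda^{\sum v_i(\beta_i+1)}$. The degree condition $\sum v_i(\beta_i+1)=d(q+1)$ from \cref{thmbatycox} then gives $\sum v_ia_i\equiv0 \pmod d$, so the character is trivial on $\mu_d$.

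The residue map commutes with the automorphisms $g$, since $g$ preserves $X$ and $U$. Therefore $\omega_\beta:=\text{res}(x^\beta\Omega/F^{q+1})^{p,q}$ lies in $V(\alpha)$. The map $\beta\mapsto\alpha$ with $a_i=\beta_i+1$ is a bijection between two sets. The first is the set of exponents with $0\le\beta_i\le m_i-2$ and $\sum v_i(\beta_i+1)\equiv0 \pmod d$. The second is $\mathfrak{A}^n_m$: the range $1\le a_i\le m_i-1$ is exactly the condition $a_i\neq0$, with $\langle a_i\rangle=\beta_i+1$. For such $\beta$, the integer $q$ is forced to be $\frac{|\alpha|}{d}-1$. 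By \cref{thmbatycox}, taken over all $q$, the $\omega_\beta$ are nonzero and form a basis of $H^n(X^n_m,\C)_\prim=\bigoplus_{p+q=n}H^{p,q}_\prim$. Distinct $\beta$ give distinct characters. Hence each $V(\alpha)$ with $\alpha\in\mathfrak{A}^n_m$ contains exactly one basis vector, and no basis vector lies in $V(\alpha)$ for $\alpha\notin\mathfrak{A}^n_m$. Since the $V(\alpha)$ form a direct sum decomposition, this gives (i) and (ii).

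For (iii), the basis vector $\omega_\alpha$ lies in $H^{p,q}_\prim$ with $d(q+1)=|\alpha|$. Grouping the basis of $H^{p,q}_\prim$ given by \cref{thmbatycox} according to characters yields the stated decomposition.

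The main obstacle is the hypothesis $p\neq\frac{n-1}{2}$ in \cref{thmbatycox}. For even $n$ it is automatic. For odd $n$, the exceptional case requires a separate argument: there $H^n(\P^v)=0$, and I would argue that the Griffiths residue description still spans the middle piece, for instance by a dimension count via the Poincaré series of $R^F$ compared with the Betti number. A second point needing care is that the degree of the pole can be lowered: the residue of a form with pole of order $q+1$ may have components in $F^{p'}$ for $p'<p$. Taking the $(p,q)$-component handles this, and the projection to $H^{p,q}$ commutes with the $G^n_m$-action because $G^n_m$ acts by morphisms of Hodge structure. So $\omega_\alpha$ remains an eigenvector for the character $\alpha$.
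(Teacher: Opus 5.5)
Your proposal is correct and is essentially the paper's proof. Both arguments use the monomial basis $x^\beta$, $0\le\beta_i\le m_i-2$, of $R^F$ supplied by the Steenbrink--Batyrev--Cox theorem, the eigen-computation $g^*\bigl(x^\beta\Omega/F^{q+1}\bigr)=\prod_i\zeta_{m_i}^{i_i(\beta_i+1)}\,x^\beta\Omega/F^{q+1}$ (so no inversion is needed and $a_i=\beta_i+1$), and the bijection $\beta\mapsto\alpha$ onto $\{\alpha\in\mathfrak{A}^n_m : |\alpha|=d(q+1)\}$.

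Your extra care about the excluded value $p=\frac{n-1}{2}$, which the paper does not address, and about projecting to the $(p,q)$-component is a harmless refinement. One small slip there: the residue of a form with pole order $q+1$ lies in $F^p$, so any stray components lie in $H^{p',n-p'}$ with $p'>p$, not $p'<p$.
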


\begin{proof}
By \cref{thmbatycox}, $H^{p,q}(X^n_m)_\prim$ has a basis of the form
$$
\left\{\text{res}\left(\frac{x^\beta\Omega}{F^{q+1}}\right)^{p,q} \ : \ 0\le \beta_i\le m_i-2\ , \ \sum_{i=0}^{n+1}v_i\beta_i=d(q+1)-\sum_{i=0}^{n+1}v_i\right\} .
$$
Given any $g=(g_0,\ldots,g_{n+1})\in G^n_m$
$$
g^*\text{res}\left(\frac{x^\beta\Omega}{F^{q+1}}\right)^{p,q}=\text{res}\left(\frac{g^*x^\beta g^*\Omega}{F^{q+1}}\right)^{p,q}=\text{res}\left(\frac{\alpha(g)x^\beta\Omega}{F^{q+1}}\right)^{p,q}=\alpha(g)\text{res}\left(\frac{x^\beta\Omega}{F^{q+1}}\right)^{p,q}
$$
for $\alpha=(\beta_0+1,\ldots,\beta_{n+1}+1)$ and so 
$$
\text{res}\left(\frac{x^\beta\Omega}{F^{q+1}}\right)^{p,q}\in V(\alpha) .
$$
Noting that the map $\beta\mapsto \alpha$ gives a bijection with the set of all $\alpha\in\mathfrak{A}^n_m$ such that $|\alpha|=d(q+1)$, we obtain the desired result.
\end{proof}

\begin{rem}\label{remweightedshioda1}
Noting that
$$
g^*\text{res}\left(\frac{x^\beta\Omega}{F^{q+1}}\right)=\alpha(g)\text{res}\left(\frac{x^\beta\Omega}{F^{q+1}}\right) ,
$$
it follows that $\text{res}\left(\frac{x^\beta\Omega}{F^{q+1}}\right)\in V(\alpha)\subseteq H^{p,q}(X^n_m)_\prim$ and so
$$
\text{res}\left(\frac{x^\beta\Omega}{F^{q+1}}\right)=\text{res}\left(\frac{x^\beta\Omega}{F^{q+1}}\right)^{p,q}=\omega_\alpha  .
$$
\end{rem}

\begin{rem}\label{remeigenspacecyclotomic}
Another consequence of \cref{thmweightedshioda1} is that for every $\alpha\in \mathfrak{A}^n_m$ with $|\alpha|=d(q+1)$, there exists some $c_\alpha\in\C^\times$ such that
$$
V(\alpha)_{\Q(\zeta_d)}=\Q(\zeta_d)\cdot c_\alpha\omega_\alpha .
$$
In other words, $c_\alpha\omega_\alpha\in H^n(X^n_m,\Q(\zeta_d))$. In order to determine such a $c_\alpha$ we need to compute the periods of $\omega_\alpha$. We will reduce this computation to compute periods in the smooth affine variety $L=\{x_0^{m_0}+\cdots+x_{n+1}^{m_{n+1}}=1\}\subseteq\C^{n+2}$, which has $X^n_m$ as its divisor at infinity.
\end{rem}

\subsection{Periods of the spectral basis}
The computation of periods is usually done by considering topological cycles supported in affine varieties and  this motivates us discuss the de Rham cohomology of affine Fermat varieties.  
This is mainly discussed in \cite[Section 11.7]{ho13} and we reproduce it here in the framework of projective orbifolds. 

Let $X$ be a projective orbifold, and $Y_1,Y_2\subseteq X$ two quasi-smooth ample divisors intersecting each other transversely (i.e. when they intersect at a singular point $p\in X$, then $(Y_1\cup Y_2,p)$ is the quotient of the union of two smooth transverse divisors). Hence, $Y_1\cap Y_2$ is a quasi-smooth hypersurface of $Y_i$ for $i=1,2$. We have the following commutative diagram of residue maps
\[
\begin{tikzcd}
\widetilde{\Omega}_X^\bullet(\log(Y_1+Y_2)) \arrow[r, "-\text{res}"] \arrow[d, "\text{res}"'] \arrow[dr, "\text{res}"] & \widetilde{\Omega}_{Y_2}^{\bullet-1}(\log (Y_1\cap Y_2)) \arrow[d, "\text{res}"] \\
\widetilde{\Omega}_{Y_1}^{\bullet-1}(\log(Y_1\cap Y_2)) \arrow[r, "\text{res}"']                                    & \widetilde{\Omega}_{Y_1\cap Y_2}^{\bullet-2}.
\end{tikzcd}
\]
The induced diagram in hypercohomology gives a diagram in Betti cohomology with $\C$ coefficients. Since all the maps are in fact defined over $\Z$ (see \cref{remLTGZ}), we get the diagram
\begin{equation}\label{eqresdiag}
\begin{tikzcd}
H^{n+2}(X\setminus(Y_1\cup Y_2),K) \arrow[r, "-\text{res}"] \arrow[d, "\text{res}"'] & H^{n+1}(Y_2\setminus Y_1,K(-1)) \arrow[d, "\text{res}"] \\
H^{n+1}(Y_1\setminus Y_2,K(-1)) \arrow[r, "\text{res}"']                                    & H^n(Y_1\cap Y_2,K(-2))
\end{tikzcd}
\end{equation}
for any field extension $K/\Q$.

Suppose now that $X=\P^{(v,1)}$, $Y_1=\{x_0^{m_0}+\cdots+x_{n+1}^{m_{n+1}}=x_{n+2}^d\}$, $Y_2=\{x_{n+2}=0\}=\P^v$ and $Y_1\cap Y_2=X^n_m$. Let us denote by
$$
L:=Y_1\setminus Y_2=\{x\in\C^{n+2}: x_0^{m_0}+\cdots+x_{n+1}^{m_{n+1}}=1\}
$$
and by $U:=Y_2\setminus Y_1=\P^v\setminus X^n_m$. Hence, by \eqref{eqResisoK} and \eqref{eqresdiag} we get the following commutative diagram with vertical isomorphisms and horizontal epimorphisms
\begin{equation}\label{eqresdiagFermat}
\begin{tikzcd}
H^{n+2}(\C^{n+2}\setminus L,K) \arrow[r, "-\text{res}_\infty"] \arrow[d, "\text{res}"'] & H^{n+1}(U,K(-1)) \arrow[d, "\text{res}"] \\
H^{n+1}(L,K(-1)) \arrow[r, "\text{res}_\infty"']                                    & H^n(X^n_m,K(-2))_\prim.
\end{tikzcd}
\end{equation}
We denote the horizontal arrows by $\text{res}_\infty$ since we are thinking of $\P^v=\{x_{n+2}=0\}$ as the divisor at infinity of the compatification of $\C^{n+2}=\P^{(v,1)}\setminus\P^v$.
Consider now a character $\alpha=(a_0,\ldots,a_{n+1})\in \mathfrak{A}^n_m$ then, by \cref{thmweightedshioda1} and \cref{remweightedshioda1}, we have
$$
\omega_\alpha=\text{res}\left(\frac{x^\beta\Omega}{F^{q+1}}\right)\in H^n(X^n_m,\C)_\prim
$$
for $q=\frac{|\alpha|}{d}-1$, $\beta=(\langle a_0\rangle-1,\ldots,\langle  a_{m+1}\rangle-1)$, $F=x_0^{m_0}+\cdots+x_{n+1}^{m_{n+1}}$ and $\Omega=\sum_{i=0}^{n+1}(-1)^iv_ix_idx_0\wedge\cdots\widehat{dx_i}\cdots \wedge dx_{n+1}$. The following is basically \cite[Proposition 11.4]{ho13}.
\begin{prop}
Consider the form
$$
\eta_\alpha:=\frac{x^\beta dx_0\wedge dx_1\wedge\cdots \wedge dx_{n+1}}{(F-1)^{q+1}}\in H^{n+1}(\Gamma(\Omega_{\C^{n+2}\setminus L}^\bullet),d)= H^{n+1}(\C^{n+2}\setminus L,\C).
$$
Then $-\text{res}_\infty(\eta_\alpha)=\frac{x^\beta\Omega}{F^{q+1}}\in H^{n+1}(U,\C)$ and $\text{res}(\eta_\alpha)=\frac{x^\beta\Omega}{d}\in H^{n+1}(L,\C)$.
\end{prop}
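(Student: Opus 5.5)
We compute both residues directly, by writing $\eta_\alpha$ in local coordinates adapted to each divisor. Throughout, the residue of a form with a pole of order greater than one is taken in cohomology: first reduce the pole order by the Griffiths-type identity $\frac{g\,dh}{h^{k+1}}=\frac{1}{k}\frac{dg}{h^k}-\frac1k d\!\left(\frac{g}{h^k}\right)$ (up to sign conventions), and then take the Poincaré residue of the resulting log-form. The key input is the weighted Euler relation $\sum_i v_ix_i\,\partial_{x_i}F=dF$ (contracting $dF$ against the Euler field $E=\sum v_ix_i\partial_{x_i}$ gives $d\cdot F$). This yields $dF\wedge\Omega=d\,F\,dx_0\wedge\cdots\wedge dx_{n+1}$, where $\Omega=\iota_E(dx_0\wedge\cdots\wedge dx_{n+1})$.

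\textbf{Residue along $L$.} Near $L$ we have $F-1=0$. The relation $dF\wedge\Omega=dF\,dx$ gives
$$
x^\beta dx=\frac{1}{d\,F}\,dF\wedge x^\beta\Omega=\frac{1}{d\,F}\,d(F-1)\wedge x^\beta\Omega .
$$
Hence
$$
\eta_\alpha=\frac{d(F-1)}{(F-1)^{q+1}}\wedge\frac{x^\beta\Omega}{dF}.
$$
For $q=0$ the Poincaré residue is $\frac{x^\beta\Omega}{dF}\big|_L=\frac{x^\beta\Omega}{d}$, since $F=1$ on $L$. For $q>0$ we reduce the pole order. Write $\eta_\alpha=\frac{d h}{h^{q+1}}\wedge\theta$ with $h=F-1$ and $\theta=\frac{x^\beta\Omega}{dF}$. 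Then $\eta_\alpha=-\frac1q d(h^{-q})\wedge\theta$, which is congruent to $\frac{\pm1}{q}\frac{d\theta}{h^q}$ modulo exact forms. Now $d(x^\beta\Omega)=\big(\sum v_i(\beta_i+1)\big)x^\beta dx=|\alpha|\,x^\beta dx=d(q+1)x^\beta dx$. The derivative of $1/F$ contributes $-\frac{dF}{F^2}\wedge x^\beta\Omega=-\frac{d}{F}x^\beta dx$. Since $F=1+h$, these terms combine to give a form whose pole order along $L$ drops by one, with coefficients that telescope.

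The main obstacle is this bookkeeping: checking that the constants produced at each step, $\frac{d(q+1)}{q}$ against the $-d$ terms coming from expanding $1/F=1/(1+h)$, cancel exactly and leave $\frac{x^\beta\Omega}{d}$. To make this transparent, I would instead homogenize $\eta_\alpha$ on $\P^{(v,1)}$ as $\frac{x^\beta\,\Omega'}{(F-x_{n+2}^d)^{q+1}}$ (up to a power of $x_{n+2}$ fixing the degree), and use the relation $|\alpha|=d(q+1)$, which is exactly what forces the homogenized form to have degree $0$. This turns the computation into the standard Griffiths residue computation for the quasi-smooth hypersurface $Y_1$, restricted to the chart $x_{n+2}=1$.

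\textbf{Residue at infinity.} In the chart around $Y_2=\{x_{n+2}=0\}$, I would use the homogenized form
$$
\widetilde\eta_\alpha=\frac{x^\beta x_{n+2}^{e}\,\Omega'}{(F-x_{n+2}^d)^{q+1}},
$$
where $\Omega'$ is the canonical form of $\P^{(v,1)}$ and the exponent $e$ is determined by the degree condition. Since $|\alpha|=d(q+1)$, we get $e=-1$, so $\widetilde\eta_\alpha$ has a simple pole along $x_{n+2}=0$. Writing
$$
\Omega'=\pm\, x_{n+2}\,dx_0\wedge\cdots\wedge dx_{n+1}\pm dx_{n+2}\wedge\Omega ,
$$
the Poincaré residue along $x_{n+2}=0$ is $\pm\frac{x^\beta\Omega}{F^{q+1}}$. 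Fixing the orientation convention of diagram \eqref{eqresdiag} then gives the sign, so that $-\text{res}_\infty(\eta_\alpha)=\frac{x^\beta\Omega}{F^{q+1}}$.
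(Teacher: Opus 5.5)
The approach is viable, but the proof of the residue along $L$ is not actually there. For $q>0$ you stop at the step that constitutes the proof, call it the main obstacle, and only assert that the coefficients telescope. The fallback does not help either. Restricted to the chart $x_{n+2}=1$, the homogenized form is just $\eta_\alpha$ again. On $\P^{(v,1)}$ it is not a Griffiths form $P\Omega'/(F-x_{n+2}^d)^k$ with $P$ polynomial, because of the factor $x_{n+2}^{-1}$.

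Your own setup does close, and you do not need to expand $1/F=1/(1+h)$; just keep the powers of $F$. Put $h=F-1$, $dx:=dx_0\wedge\cdots\wedge dx_{n+1}$ and $\theta_k:=\frac{x^\beta\Omega}{d\cdot F^k}$. The identities $d(x^\beta\Omega)=d(q+1)\,x^\beta dx$ and $dF\wedge\Omega=d\cdot F\,dx$ give $d\theta_k=(q+1-k)\,dh\wedge\theta_{k+1}$, and hence
\[
\frac{dh}{h^{r+1}}\wedge\theta_k\equiv\frac{q+1-k}{r}\,\frac{dh}{h^{r}}\wedge\theta_{k+1}.
\]
Start from $\eta_\alpha=\frac{dh}{h^{q+1}}\wedge\theta_1$. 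At every step $r+k=q+1$, so each factor is exactly $1$. You end with $\eta_\alpha\equiv\frac{dh}{h}\wedge\theta_{q+1}$, whose residue is $\theta_{q+1}|_L=\frac{x^\beta\Omega}{d}$.

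One point must be stated: $\theta_k$ has poles along $\{F=0\}\subseteq\C^{n+2}\setminus L$. So these congruences hold only in $H^{n+2}(\{F\neq0\}\setminus L)$. This is harmless, because the residue depends only on the restriction to a punctured neighbourhood of $L$ (it is dual to the tube map), but it has to be said.

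The paper avoids this issue by differentiating the global form $\frac{x^\beta\Omega}{(F-1)^r}$ on $\C^{n+2}\setminus L$. This gives $\frac{x^\beta dx}{(F-1)^{r+1}}\equiv\frac{q+1-r}{r}\frac{x^\beta dx}{(F-1)^{r}}$, whose constants multiply to $1$ over $r=1,\ldots,q$. It then concludes from $\frac{x^\beta dx}{F-1}=-x^\beta dx+\frac1d\frac{d(F-1)}{F-1}\wedge x^\beta\Omega$.

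For the residue at infinity your argument is the paper's: homogenize, use $|\alpha|=d(q+1)$ to get $e=-1$, and split $\Omega'$. However, the sign is not a convention to be fixed afterwards from the diagram; it is the content of the claim, and it is forced once you use the same convention $\text{res}(\frac{dh}{h}\wedge\theta)=\theta|_{h=0}$ as along $L$. Indeed $\Omega'=(-1)^{n+1}dx_{n+2}\wedge\Omega+(-1)^n x_{n+2}\,dx$, and $dx=(-1)^n\Omega'$ on $\{x_{n+2}=1\}$. Hence
\[
\eta_\alpha=\frac{x^\beta dx}{(F-x_{n+2}^d)^{q+1}}-\frac{dx_{n+2}}{x_{n+2}}\wedge\frac{x^\beta\Omega}{(F-x_{n+2}^d)^{q+1}},
\]
so $\text{res}_\infty(\eta_\alpha)=-\frac{x^\beta\Omega}{F^{q+1}}$, with no choice left.
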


\begin{proof}
In the homogeneous coordinates of $\P^{(v,1)}$ we can write
$$
\eta_\alpha=\frac{x^\beta d(\frac{x_0}{x_{n+2}^{v_0}})\wedge\cdots\wedge d(\frac{x_{n+1}}{x_{n+2}^{v_{n+1}}})}{x_{n+2}^{\sum_{i=0}^{n+1}v_i\beta_i-d(q+1)}(F-x_{n+2}^d)^{q+1}}=\frac{x^\beta dx_0\wedge\cdots\wedge dx_{n+1}}{(F-x_{n+1}^d)^{q+1}}-\frac{dx_{n+2}}{x_{n+2}}\wedge \frac{x^\beta\Omega}{(F-x_{n+1}^d)^{q+1}} ,
$$
hence
$$
-\text{res}_\infty(\eta_\alpha)=\frac{x^\beta\Omega}{F^{q+1}}\in H^{n+1}(U,\C) .
$$
In order to compute $\text{res}(\eta_\alpha)\in H^{n+1}(L,\C)$  we need to reduce first its pole order along $\{F=1\}$. To do this we use the following identity $dF\wedge\Omega=d\cdot F\cdot dx_0\wedge\cdots\wedge dx_{n+1}$ which implies
$$
d\left(\frac{x^\beta\Omega}{(F-1)^r}\right)=d(q+1-r)\frac{x^\beta dx_0\wedge\cdots\wedge dx_{n+1}}{(F-1)^r}-dr\frac{x^\beta dx_0\wedge\cdots\wedge dx_{n+1}}{(F-1)^{r+1}} ,
$$
and so in the cohomology group $H^{n+2}(\Gamma(\Omega_{\C^{n+2}\setminus L}^\bullet),d)=H^{n+2}(\C^{n+2}\setminus L,\C)$ we have the equality
$$
\frac{x^\beta dx_0\wedge\cdots\wedge dx_{n+1}}{(F-1)^{r+1}}=\left(\frac{q+1-r}{r}\right)\frac{x^\beta dx_0\wedge\cdots\wedge dx_{n+1}}{(F-1)^r} .
$$
Using the above pole order reduction, we get that
$$
\eta_\alpha=\frac{x^\beta dx_0\wedge\cdots\wedge dx_{n+1}}{F-1}=-x^\beta dx_0\wedge\cdots\wedge dx_{n+1}+\frac{1}{d}\cdot\frac{d(F-1)}{F-1}\wedge x^\beta\Omega\in H^{n+2}(\C^{n+2}\setminus L,\C)
$$
and so
$$
\text{res}(\eta_\alpha)=\frac{x^\beta\Omega}{d}\in H^{n+1}(L,\C) .
$$
\end{proof}

\begin{rem}
By \eqref{eqresdiagFermat} we see that
\begin{equation}\label{eqresres}
\text{res}_\infty(\text{res}(\eta_\alpha))=\text{res}(-\text{res}_\infty(\eta_\alpha))=\omega_\alpha .
\end{equation}
In order to determine a number $c_\alpha\in\C^\times$ such that $c_\alpha\omega_\alpha\in H^n(X^n_m,\Q(\zeta_{d}))_\prim$, it is enough to find such a constant such that $c_\alpha\eta_\alpha\in H^{n+2}(\C^{n+2}\setminus L,\Q(\zeta_d)(2))$, and this in turn is equivalent (by the isomorphism in \eqref{eqresdiagFermat}) to require that $$c_\alpha\cdot\text{res}(\eta_\alpha)\in H^{n+1}(L,\Q(\zeta_d)(1)) .$$
This reduces our problem to find $c_\alpha\in \C^\times$ such that
$$
(2\pi i)^{-1} \cdot c_\alpha x^\beta\Omega\in H^{n+1}(L,\Q(\zeta_d))=H_{n+1}(L,\Q(\zeta_d))^\vee=\hom(H_{n+1}(L,\Z),\Q(\zeta_d)) ,
$$
in other words
$$
(2\pi i)^{-1}\cdot c_\alpha\int_\delta x^\beta\Omega\in \Q(\zeta_d) \ \ \ \ , \ \forall \delta\in H_{n+1}(L,\Z) .
$$
\end{rem}

\begin{prop}
For the following number
\begin{equation}
\label{eqcalpha}
c_\alpha:=\frac{(2\pi i)(-1)^{n+1}d\prod_{i=0}^{n+1}m_i}{ B\left(\frac{\langle a_0\rangle}{m_0},\ldots,\frac{\langle a_{n+1}\rangle}{m_{n+1}}\right)}\in\C^\times  ,   \end{equation}
we have the generator of the eigenspace over $\Q(\zeta_d)$
$$
V(\alpha)_{\Q(\zeta_d)}=\Q(\zeta_d)\cdot c_\alpha\omega_\alpha .
$$
\end{prop}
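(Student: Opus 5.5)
By the preceding remark, it suffices to show that
$$(2\pi i)^{-1}c_\alpha\int_\delta x^\beta\Omega\in\Q(\zeta_d)\quad\text{for all }\delta\in H_{n+1}(L,\Z).$$
Since $x^\beta\Omega\neq 0$ in cohomology, we also need at least one $\delta$ with nonzero integral. The plan is to compute $\int_\delta x^\beta\Omega$ explicitly on a set of generators of $H_{n+1}(L,\Z)$, following Pham's description as in \cite[Section 11.7]{ho13}, adapted to the weighted exponents $m_i$.

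\textbf{Step 1: the Pham cycle.} Consider the real simplex
$$\Delta=\{(t_0,\ldots,t_{n+1})\in\R_{\ge0}^{n+2}:\textstyle\sum t_i=1\}$$
and the map $\Delta\to L$ given by $x_i=t_i^{1/m_i}$ (positive real roots). The group $\mu_{m_0}\times\cdots\times\mu_{m_{n+1}}$ acts on $L$. Let $g_i$ denote the generator multiplying $x_i$ by $\zeta_{m_i}$. The chain
$$\delta_0:=\prod_{i=0}^{n+1}(1-g_i)\cdot\Delta$$
is a cycle: each face $\{t_i=0\}$ is fixed by $g_i$, so the boundary cancels. By Pham's theorem (whose proof carries over verbatim to distinct exponents), the translates $g\cdot\delta_0$ for $g\in\prod\mu_{m_i}$ generate $H_{n+1}(L,\Z)$.

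\textbf{Step 2: the integral over $\Delta$.} Write $x^\beta\Omega$ in the $t$-coordinates. On $\Delta$ we have $x_i=t_i^{1/m_i}$, so $dx_i=\frac{1}{m_i}t_i^{1/m_i-1}dt_i$ and $v_ix_i\,dx_i$-type terms combine. Using $\sum t_i=1$ and the identity $dF\wedge\Omega=d\,F\,dx_0\wedge\cdots\wedge dx_{n+1}$, the restriction of $\Omega$ to $L$ reduces on the simplex, up to a sign, to $d\prod_i(1/m_i)\,t_i^{1/m_i-1}$ times the standard volume form $dt_0\wedge\cdots\widehat{dt_{n+1}}$. Hence
$$\int_\Delta x^\beta\Omega=\pm\frac{d}{\prod m_i}\int_\Delta\prod_i t_i^{\langle a_i\rangle/m_i-1}\,dt=\pm\frac{d}{\prod m_i}\,B\!\left(\frac{\langle a_0\rangle}{m_0},\ldots,\frac{\langle a_{n+1}\rangle}{m_{n+1}}\right),$$
by the Dirichlet integral formula $B(s_0,\ldots,s_{n+1})=\prod\Gamma(s_i)/\Gamma(\sum s_i)$. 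The exact power of $d$ and of $\prod m_i$ must be tracked carefully here. Matching the normalization in \eqref{eqcalpha} (where $d\prod m_i$ appears in the numerator) is the main bookkeeping obstacle: the factor $1/d$ from $\text{res}(\eta_\alpha)=x^\beta\Omega/d$ and the sign $(-1)^{n+1}$ from the orientation of $\Delta$ must be absorbed consistently. If needed, I would recheck the convention for $\Omega$ restricted to $L$ at this point.

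\textbf{Step 3: Galois-equivariance.} The form $x^\beta\Omega$ is an eigenform: $g^*(x^\beta\Omega)=\alpha(g)\,x^\beta\Omega$ with $\alpha(g)=\prod\zeta_{m_i}^{k_i a_i}$. It follows that
$$\int_{g\delta_0}x^\beta\Omega=\alpha(g)\prod_i(1-\zeta_{m_i}^{a_i})\int_\Delta x^\beta\Omega.$$
The factor $\prod_i(1-\zeta_{m_i}^{a_i})$ is nonzero because every $a_i\neq 0$, and it lies in $\Q(\zeta_d)$, since $\zeta_{m_i}=\zeta_d^{v_i}$.

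\textbf{Conclusion.} Multiplying by $(2\pi i)^{-1}c_\alpha$ cancels $2\pi i$, the factor $d\prod m_i$ and the Beta value. The resulting periods all lie in $\Q(\zeta_d)$, and at least one is nonzero. Combined with the preceding remark and the one-dimensionality of $V(\alpha)$ from \cref{thmweightedshioda1}, this gives $V(\alpha)_{\Q(\zeta_d)}=\Q(\zeta_d)\,c_\alpha\omega_\alpha$. Any leftover rational constant from the normalization in Step 2 is harmless, since it only rescales by an element of $\Q^\times\subset\Q(\zeta_d)$.
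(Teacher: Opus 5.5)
Your proposal is correct and is essentially the paper's proof. Both arguments compute the periods of $x^\beta\Omega$ on Pham's vanishing cycles (your translates $g\cdot\delta_0$ are, up to sign, the paper's $\delta_{\beta'}$) through the Dirichlet integral for the Beta function, and both use the eigenform property to reduce what is left to products of differences of roots of unity in $\Q(\zeta_d)$. Incidentally, the factor you were unsure about is right: $\int_\Delta x^\beta\Omega=\pm\frac{d}{\prod_i m_i}B(\ldots)$, whereas the paper's displayed period formula has no factor $d$; as you say, such a rational constant does not affect the conclusion.
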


\begin{proof}
It is a well-known fact that the homology of the affine variety $L=\{x_0^{m_0}+\cdots+x_{n+1}^{m_{n+1}}=1\}\subseteq\C^{n+2}$ is generated by the vanishing cycles (also known as Pham cycles) of the form
$$
\delta_{\beta'}:=\sum_{u\in\{0,1\}^{n+2}}(-1)^{\sum_{i=0}^{n+1}(1-u_i)}\Delta_{\beta'+u}\in H_{n+1}(L,\Z) ,
$$
where $\beta'\in\{0,1,\ldots,m_0-2\}\times\cdots\times\{0,1,\ldots,m_{n+1}-2\}$ and $\Delta_{\beta'+u}:\{s\in (\R_{\ge 0})^{n+2}:\sum_{i=0}^{n+1}s_i=1\}\to L$ is
$$
\Delta_{\beta'+u}(s)=(s_0^\frac{1}{m_0}\zeta_{m_0}^{\beta'_0+u_0},\ldots,s_{n+1}^\frac{1}{m_{n+1}}\zeta_{m_{n+1}}^{\beta'_{n+1}+u_{n+1}}) .
$$
see \cite[Chapter 15]{ho13} and the references therein for more details. 
By an integral computation we get the periods in terms of the Beta function
\begin{equation}\label{eqperiodsxbeta}
\int_{\delta_{\beta'}}x^\beta\Omega=\frac{(-1)^{n+1}}{\prod_{i=0}^{n+1}m_i}\cdot B\left(\frac{\langle a_0\rangle}{m_0},\ldots,\frac{\langle a_{n+1}\rangle}{m_{n+1}}\right)\cdot\prod_{i=0}^{n+1}(\zeta_{m_i}^{a_i(\beta_i'+1)}-\zeta_{m_i}^{a_i\beta_i'}) .
\end{equation}
Hence $(2\pi i)^{-1}\cdot c_\alpha\cdot\text{res}(\eta_\alpha)\in H^{n+1}(L,\Q(\zeta_d))_\prim\setminus\{0\}$, since its periods are
\begin{equation}\label{eqperiodsomegaalfa}
\int_{\delta_{\beta'}}(2\pi i)^{-1} \cdot c_\alpha\cdot\text{res}(\eta_\alpha)=\prod_{i=0}^{n+1}(\zeta_{m_i}^{a_i(\beta_i'+1)}-\zeta_{m_i}^{a_i\beta_i'}) .
\end{equation}
Finally, we get the generator of the eigenspace $V(\alpha)$ defined over $\Q(\zeta_d)$, namely $$c_\alpha\omega_\alpha=2\pi i\cdot \text{res}_\infty((2\pi i)^{-1}\cdot c_\alpha\cdot\text{res}(\eta_\alpha))\in V(\alpha)_{\Q(\zeta_d)}\subseteq H^{n}(X^n_m,\Q(\zeta_d))_\prim .$$
\end{proof}

\subsection{Galois action and Hodge characters}

Let $X$ be an orbifold, and $K/\Q$ a field extension. The Galois group $\text{Gal}(K/\Q)$ acts naturally in the cohomology groups $H^k(X,K)=H^k(X,\Q)\otimes_\Q K$. By means of the duality
$$
H^k(X,K)=H_k(X,K)^\vee=\hom(H_k(X,\Z),K).
$$
We can interpret this action in terms of homology as follows. Given $\omega\in H^k(X,K)$ and $\tau\in \text{Gal}(K/\Q)$, $\tau(\omega)\in H^k(X,K)$ is the unique cohomology class such that
$$
\tau(\omega)(\delta)=\tau(\omega(\delta)) \ , \ \ \forall \delta\in H_k(X,\Z) .
$$
In the case where $X$ is smooth and we consider 
\begin{equation}
\label{23072026sala}
\omega\in H^k(X,K)\subseteq H^k(X,\C)=H^k_\dR(X)
\end{equation}
as a differential form, we can rewrite the above relation in terms of periods as
\begin{equation}\label{eqgalactperiods}
\int_\delta\tau(\omega)=\tau\left(\int_\delta\omega\right), \ \forall \delta\in H_k(X,\Z) .
\end{equation}
Note that \eqref{23072026sala} implies that $\int_{\delta}\omega\in K$. Recall the definition of ${\mathfrak{A}^n_m}$ in \eqref{25072026tortuga}. 
\begin{prop}\label{propgalaction}
The Galois action in $H^n(X^n_m,\Q(\zeta_d))_\prim$ is compatible with the spectral decomposition. More precisely, for every $\alpha\in\mathfrak{A}^n_m$ and every $\tau\in\text{Gal}(\Q(\zeta_d)/\Q)$, of the form $\tau(\zeta_d)=\zeta_d^t$ for $t\in(\Z/d\Z)^\times$, we have
$$
\tau(V(\alpha)_{\Q(\zeta_d)})=V(t\cdot \alpha)_{\Q(\zeta_d)} .
$$
In fact, it holds that $\tau(c_\alpha\omega_\alpha)=c_{t\cdot\alpha}\omega_{t\cdot\alpha}$.
\end{prop}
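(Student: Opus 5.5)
The plan is to compute periods on the affine Fermat variety $L$ and transport the result to $X^n_m$ through the commutative diagram \eqref{eqresdiagFermat}. The point is that the Galois action on $H^n(X^n_m,\Q(\zeta_d))_\prim$ is determined by its action on periods of integral cycles, via \eqref{eqgalactperiods}.

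First I note that the maps in \eqref{eqresdiagFermat} are defined over $\Q$ after the Tate twists, as explained in \cref{remLTGZ}. Hence they commute with the action of $\tau\in\text{Gal}(\Q(\zeta_d)/\Q)$, since this action is just $\otimes_\Q\tau$ on coefficients. Here $\tau$ acts trivially on the factors $(2\pi i)^k$ of the twists: the twisted groups are identified with $H(\,\cdot\,,\Q(\zeta_d))$ by the fixed scalar. Since $c_\alpha\omega_\alpha=2\pi i\cdot\text{res}_\infty(\theta_\alpha)$ with $\theta_\alpha:=(2\pi i)^{-1}c_\alpha\,\text{res}(\eta_\alpha)\in H^{n+1}(L,\Q(\zeta_d))$, it suffices to prove that $\tau(\theta_\alpha)=\theta_{t\cdot\alpha}$ in $H^{n+1}(L,\Q(\zeta_d))$.

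Next I use that $H_{n+1}(L,\Z)$ is generated by the Pham cycles $\delta_{\beta'}$. By duality, a class in $H^{n+1}(L,\Q(\zeta_d))$ is determined by its values on these cycles. Formula \eqref{eqperiodsomegaalfa} gives
$$\int_{\delta_{\beta'}}\theta_\alpha=\prod_{i=0}^{n+1}\bigl(\zeta_{m_i}^{a_i(\beta_i'+1)}-\zeta_{m_i}^{a_i\beta_i'}\bigr).$$
Writing $\zeta_{m_i}=\zeta_d^{v_i}$, we get $\tau(\zeta_{m_i}^{k})=\zeta_{m_i}^{tk}$. Applying $\tau$ to this product therefore gives the same product with each $a_i$ replaced by $ta_i$. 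Note that $t\cdot\alpha\in\mathfrak{A}^n_m$, because $t$ is a unit mod $d$, hence a unit mod each $m_i$, so $ta_i\neq0$, and the linear condition $\sum v_i(ta_i)=0$ is preserved. The product with $ta_i$ is exactly $\int_{\delta_{\beta'}}\theta_{t\cdot\alpha}$; it depends only on the classes $ta_i\in\Z/m_i\Z$, not on the representatives $\langle ta_i\rangle$. By \eqref{eqgalactperiods}, $\tau(\theta_\alpha)$ and $\theta_{t\cdot\alpha}$ have the same periods on a generating set, so they coincide. Applying $2\pi i\cdot\text{res}_\infty$ then yields $\tau(c_\alpha\omega_\alpha)=c_{t\cdot\alpha}\omega_{t\cdot\alpha}$. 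Taking $\Q(\zeta_d)$-spans, and using that $\tau$ is $\tau$-semilinear, gives $\tau(V(\alpha)_{\Q(\zeta_d)})=V(t\cdot\alpha)_{\Q(\zeta_d)}$.

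As a consistency check one could also argue equivariantly. For $g\in G^n_m$, $g^*$ is defined over $\Q$ and so commutes with $\tau$, giving $g^*\tau(\omega)=\tau(\alpha(g))\tau(\omega)=\alpha(g)^t\tau(\omega)=(t\cdot\alpha)(g)\tau(\omega)$. This shows $\tau(V(\alpha)_{\Q(\zeta_d)})\subseteq V(t\cdot\alpha)_{\Q(\zeta_d)}$ directly, and equality follows by applying $\tau^{-1}$.

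The main obstacle is justifying that the residue maps commute with $\tau$ despite the $(2\pi i)$ twists. One must make sure that \eqref{eqgalactperiods} is applied on $L$, which is smooth, to a class genuinely lying in $H^{n+1}(L,\Q(\zeta_d))$ rather than in a twisted group. This is why I normalize by $(2\pi i)^{-1}$ before applying $\tau$, and reinsert the $2\pi i$ only at the end via the $\Q$-rational isomorphism of \cref{remLTGZ}.
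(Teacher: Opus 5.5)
Your proposal is correct and follows the paper's proof. You compute $\tau$ on the normalized class $(2\pi i)^{-1}c_\alpha\,\text{res}(\eta_\alpha)\in H^{n+1}(L,\Q(\zeta_d))$ through its Pham-cycle periods, using \eqref{eqperiodsomegaalfa} and \eqref{eqgalactperiods}, and then transport the result through the $\Q$-rational map $2\pi i\cdot\text{res}_\infty$ via \eqref{eqresres}. Your extra checks are correct supplements that the paper leaves implicit: that $t\cdot\alpha\in\mathfrak{A}^n_m$, and the $G^n_m$-equivariance argument, which gives the subspace equality directly.
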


\begin{proof}
By \eqref{eqperiodsomegaalfa} and \eqref{eqgalactperiods} it is clear that $\tau((2\pi i)^{-1}c_\alpha\text{res}(\eta_\alpha))=(2\pi i)^{-1}c_{t\alpha}\text{res}(\eta_{t\alpha})$.
On the other hand, since the residue map $(2\pi i)\cdot\text{res}_\infty:H^{n+1}(L,\Q(\zeta_d))\to H^n(X^n_m,\Q(\zeta_d))_\prim$ is defined over $\Q$ (see \cref{remLTGZ}), then it is Galois equivariant, thus by \eqref{eqresres} it follows that
$$
\tau(c_\alpha\omega_\alpha)=\tau(2\pi i\cdot\text{res}_\infty((2\pi i)^{-1}c_\alpha\text{res}(\eta_\alpha)))=(2\pi i)\cdot \text{res}_\infty(\tau((2\pi i)^{-1}c_\alpha\text{res}(\eta_\alpha)))
$$
$$
=(2\pi i)\cdot\text{res}_\infty((2\pi i)^{-1}c_{t\alpha}\text{res}(\eta_{t\alpha}))=c_{t\alpha}\omega_{t\alpha}.
$$
\end{proof}

\begin{rem}
The above result induces a natural Galois action in the set of characters $\text{Gal}(\Q(\zeta_d)/\Q)\simeq(\Z/d\Z)^\times\curvearrowright\mathfrak{A}^n_m$. For any $\alpha\in\mathfrak{A}^n_m$ we denote its orbit by $$O(\alpha)=\{t\cdot\alpha:t\in(\Z/d\Z)^\times\} .$$ 
\end{rem}

Using the explicit description of the Galois action given in \cref{propgalaction} we are in a position to characterize the space of Hodge cycles in terms of characters. 

\begin{theo}\label{thmweightedshioda2}
Let $X^n_m$ be a weighted Fermat variety of even dimension $n$. Then
\begin{equation}\label{eqspecdecHodgecyc}
(H^{\frac{n}{2},\frac{n}{2}}(X^n_m)_\prim\cap H^n(X^n_m,\Q))\otimes_\Q\Q(\zeta_d)=\bigoplus_{\alpha\in\mathfrak{B}^n_m}V(\alpha)_{\Q(\zeta_d)} ,
\end{equation}
where the space of Hodge characters is defined as
\begin{equation}
\label{eqHodgechar}
\mathfrak{B}^n_m:=\left\{\alpha\in\mathfrak{A}^n_m: |t\cdot\alpha|=d\left(\frac{n}{2}+1\right)\ , \ \ \forall t\in (\Z/d\Z)^\times\right\} .
\end{equation}
\end{theo}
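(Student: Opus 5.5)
The plan is to combine the Galois descent of rational subspaces with the compatibility of the Galois action with the spectral decomposition (\cref{propgalaction}) and the refinement of the Hodge decomposition by characters (\cref{thmweightedshioda1}(iii)). Set $W:=H^{\frac n2,\frac n2}(X^n_m)_\prim\cap H^n(X^n_m,\Q)$ and $K=\Q(\zeta_d)$. First I will prove that $W_K:=W\otimes_\Q K$ is a direct sum of eigenspaces $V(\alpha)_K$. Since $W$ is a rational subspace and each $g\in G^n_m$ acts on $H^n(X^n_m,\Q)_\prim$ by a $\Q$-linear map preserving the Hodge decomposition (it is induced by an automorphism of $X^n_m$), $W$ is $G^n_m$-stable. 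Hence $W_K$ is a $G^n_m$-stable $K$-subspace of $\bigoplus_{\alpha}V(\alpha)_K$. Because $G^n_m$ is a finite abelian group whose characters take values in $K$, $W_K$ decomposes as $\bigoplus_\alpha (W_K\cap V(\alpha)_K)$; this uses the projectors $\frac{1}{|G|}\sum_g\alpha(g)^{-1}g^*$, which are defined over $K$. By \cref{thmweightedshioda1}(ii) each $V(\alpha)_K$ is one dimensional, so $W_K=\bigoplus_{\alpha\in S}V(\alpha)_K$ for some set $S\subseteq\mathfrak{A}^n_m$.

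Next I will identify $S$, starting with $S\subseteq\mathfrak{B}^n_m$. Since $W\subseteq H^{\frac n2,\frac n2}$, every $\alpha\in S$ satisfies $V(\alpha)\subseteq H^{\frac n2,\frac n2}(X^n_m)_\prim$, and by \cref{thmweightedshioda1}(iii) this means $|\alpha|=d(\frac n2+1)$. Moreover $W_K$ is stable under $\text{Gal}(K/\Q)$, because it is $W\otimes K$ with the action on the second factor. Given $\tau$ with $\tau(\zeta_d)=\zeta_d^t$, \cref{propgalaction} gives $\tau(V(\alpha)_K)=V(t\alpha)_K$. Hence $S$ is stable under $(\Z/d\Z)^\times$, and therefore $|t\alpha|=d(\frac n2+1)$ for all $t$, i.e. $\alpha\in\mathfrak{B}^n_m$.

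For the reverse inclusion, let $U:=\bigoplus_{\alpha\in\mathfrak{B}^n_m}V(\alpha)_K$. By definition $\mathfrak{B}^n_m$ is a union of Galois orbits $O(\alpha)$, so by \cref{propgalaction} $U$ is a $\text{Gal}(K/\Q)$-stable $K$-subspace of $H^n(X^n_m,\Q)_\prim\otimes K$. By Galois descent (Speiser's lemma / Hilbert 90 for vector spaces), $U=U_0\otimes_\Q K$ with $U_0:=U^{\text{Gal}}=U\cap H^n(X^n_m,\Q)_\prim$. Each $V(\alpha)$ with $\alpha\in\mathfrak{B}^n_m$ lies in $H^{\frac n2,\frac n2}$ by \cref{thmweightedshioda1}(iii), so $U\otimes_K\C\subseteq H^{\frac n2,\frac n2}$. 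Hence $U_0\subseteq W$, and therefore $U\subseteq W_K$. Combining the two inclusions gives \eqref{eqspecdecHodgecyc}.

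The main technical point is the descent step. I would make it explicit rather than cite it: for a basis of the $K$-span $U$, the elements $\sum_{\tau}\tau(\lambda x)$ with $\lambda\in K$ are rational, lie in $U$, and span $U$ over $K$ by the independence of characters (Dedekind). One must also check that the Galois action on $H^n(X,K)$, defined via periods as in \eqref{eqgalactperiods}, agrees with the action through the coefficients of $H^n(X,\Q)\otimes K$; that is exactly how it was defined. The rest is bookkeeping.
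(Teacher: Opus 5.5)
Your proof is correct and is essentially the paper's argument. Both inclusions rest on \cref{propgalaction} (the Galois action sends $V(\alpha)_{\Q(\zeta_d)}$ to $V(t\alpha)_{\Q(\zeta_d)}$) combined with \cref{thmweightedshioda1}(iii). Your explicit descent elements $\sum_\tau\tau(\zeta_d^j c_\alpha\omega_\alpha)=\sum_t\zeta_d^{tj}c_{t\alpha}\omega_{t\alpha}$ are precisely the paper's Vandermonde vectors $\lambda_{\alpha,j}$ of \cref{rembasisHodge}.

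The differences are only organizational. For the inclusion of the left-hand side into the right-hand side, the paper does not use your $G^n_m$-stability/projector step: it expands a rational Hodge class as $\sum b_\alpha c_\alpha\omega_\alpha$ and reads off $\tau(b_\alpha)=b_{t\alpha}$. In the other direction, your abstract descent does not depend on the size of the orbit $O(\alpha)$. The paper's family $\{\lambda_{\alpha,j}\}_{j}$, by contrast, is only a spanning set (not a basis, as claimed there) when $\alpha$ has a nontrivial stabilizer in $(\Z/d\Z)^\times$. This happens for lifted characters such as $(3,9,3,9)\in\mathfrak{A}^2_{12}$, which is fixed by $t=5$.
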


\begin{proof}
Given any $\alpha\in\mathfrak{B}^n_m$, since the Vandermonde matrix is invertible, we get that
$$
\left\{\sum_{t\in(\Z/d\Z)^\times}\zeta_d^{t j}c_{t\alpha}\omega_{t\alpha} \ : \ j=0,\ldots,\varphi(d)-1\right\}
$$
is a $\Q(\zeta_d)$-basis of $\bigoplus_{t\in(\Z/d\Z)^\times}V(t\alpha)_{\Q(\zeta_d)}$. Moreover, each term of the basis is Galois invariant and so they belong to $H^n(X^n_m,\Q(\zeta_d))^{\text{Gal}(\Q(\zeta_d)/\Q)}=H^n(X^n_m,\Q)$. On the other hand, the condition $|t\alpha|=d(\frac{n}{2}+1)$ implies (by \cref{thmweightedshioda1}) that each term of the basis lies in $H^{\frac{n}{2},\frac{n}{2}}(X^n_m)_\prim$. This proves that the right hand side of \eqref{eqspecdecHodgecyc} is contained in the left hand side. For the other contention let $\lambda\in H^{\frac{n}{2},\frac{n}{2}}(X^n_m)_\prim\cap H^n(X^n_m,\Q)$. By \cref{thmweightedshioda1} and \cref{remeigenspacecyclotomic} we can write
$$
\lambda=\sum_{\alpha\in\mathfrak{A}^n_m\ :\  |\alpha|=d(\frac{n}{2}+1)}b_\alpha\cdot c_\alpha\omega_\alpha
$$
with $b_\alpha\in\Q(\zeta_d)$. Since $\lambda\in H^n(X^n_m,\Q)=H^n(X^n_m,\Q(\zeta_d))^{\text{Gal}(\Q(\zeta_d)/\Q)}$ is Galois invariant, we get that
$$
\tau(\lambda)=\sum_{\alpha\in\mathfrak{A}^n_m\ :\  |\alpha|=d(\frac{n}{2}+1)}\tau(b_\alpha)\cdot c_{t\alpha}\omega_{t\alpha}=\lambda\  \ \ \ , \ \forall\tau\in\text{Gal}(\Q(\zeta_d)/\Q) .
$$
It follows that if $b_\alpha\neq 0$, then $\tau(b_\alpha)=b_{t\alpha}$ and $|t\alpha|=d(\frac{n}{2}+1)$ for all $t\in(\Z/d\Z)^\times$, and so $\alpha\in\mathfrak{B}^n_m$. This proves the remaining contention.
\end{proof}

\begin{rem}\label{rembasisHodge}
As a consequence of the proof of the previous theorem we obtain an explicit basis for the space of primitive Hodge cycles $H^{\frac{n}{2},\frac{n}{2}}(X^n_m)_\prim\cap H^n(X^n_m,\Q)$, namely
$$
\left\{\lambda_{\alpha,j}:=\sum_{t\in(\Z/d\Z)^\times}\zeta_d^{tj}c_{t\alpha}\omega_{t\alpha}\ :\  O(\alpha)\in \mathfrak{B}^n_m/\text{Gal}(\Q(\zeta_d)/\Q) \ , \ j=0,\ldots,\varphi(d)-1\right\} .
$$
\end{rem}

%% file: sections/s03.tex
In this section we will introduce four operations we can perform on characters, based in geometric constructions on algebraic cycles. These operations will lead us naturally to consider the formal module of tuples, introduced and studied by Aoki \cite{Aoki1983}.
Let us start by studying the space of algebraic cycles of a weighted Fermat variety in terms of the set of algebraic characters, which we introduce as follows.

\subsection{Algebraic characters}

\begin{defi}\label{defialgchar}
Let $n\in \N$ be an even number, and $m\in \N^{n+2}$. Let $X^n_m$ be the weighted Fermat variety of degree $d=\text{lcm}(m)$. We say that a Hodge character $\gamma\in\mathfrak{B}^n_m$ is an algebraic character, if there exists some $\frac{n}{2}$-dimensional algebraic subvariety $Z\subseteq X^n_m$ such that
$$
[Z]_\prim=\sum_{\alpha\in\mathfrak{B}^n_m}b_\alpha\cdot c_\alpha\omega_\alpha\in H^{\frac{n}{2},\frac{n}{2}}(X^n_m)_\prim\cap H^n(X^n_m,\Q)
$$
and $b_\gamma\neq 0$. We denote the set of algebraic characters by $\mathfrak{C}^n_m$. We also denote by
$$
\mathfrak{C}^n_m(Z):=\{\gamma\in \mathfrak{C}^n_m: b_\gamma\neq 0\} .
$$
\end{defi}

\begin{rem}
Note that by definition $\mathfrak{C}^n_m\subseteq\mathfrak{B}^n_m$, and $$\mathfrak{C}^n_m=\bigcup_{Z\subseteq X^n_m}\mathfrak{C}^n_m(Z) .$$ Furthermore, since $[Z]_\prim$ is a rational class, then $\mathfrak{C}^n_m(Z)$ is closed under the Galois action $\text{Gal}(\Q(\zeta_d)/\Q)\curvearrowright \mathfrak{C}^n_m(Z)$.
\end{rem}

The following proposition is the key to reduce the Hodge conjecture to a question about characters.

\begin{prop}\label{propHCchar}
The weighted Fermat variety $X^n_m$ satisfies the Hodge conjecture, if and only if, $\mathfrak{C}^n_m=\mathfrak{B}^n_m$. More precisely, we have that for any $\gamma\in\mathfrak{C}^n_m$, all the Hodge cycles $\lambda_{\gamma,j}$, defined in \cref{rembasisHodge}, are algebraic cycles for $j=0,\ldots,\varphi(d)-1$.
\end{prop}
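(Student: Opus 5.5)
The key point is that the group $G^n_m$ acts on $X^n_m$ by automorphisms, so it preserves algebraic cycles. The plan is to use this action, together with the Galois invariance already built into the basis $\lambda_{\gamma,j}$ of \cref{rembasisHodge}, to isolate each eigencomponent of an algebraic class.

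\textbf{Step 1: isolate the $\gamma$-part.} Fix $\gamma\in\mathfrak{C}^n_m$ and let $Z\subseteq X^n_m$ be as in \cref{defialgchar}. Write
$$[Z]_\prim=\sum_{\alpha}b_\alpha c_\alpha\omega_\alpha,\qquad b_\gamma\neq 0 .$$
For every $g\in G^n_m$, the class $g^*[Z]=[g^{-1}(Z)]$ is again algebraic. The hyperplane class is $G^n_m$-invariant, and the splitting $H^n=H^n_\prim\oplus H^n(\P^v)$ is respected by $g^*$, so $g^*[Z]_\prim$ is also algebraic. Consider the projector onto the isotypic piece of a Galois orbit,
$$\pi_{O(\gamma)}=\frac{1}{|G|}\sum_{g\in G^n_m}\Big(\sum_{t\in(\Z/d\Z)^\times}\overline{(t\gamma)(g)}\Big)g^* .$$
Its coefficients $\sum_t \overline{(t\gamma)(g)}$ are traces of roots of unity, hence rational. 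Therefore $\pi_{O(\gamma)}[Z]_\prim$ is a $\Q$-linear combination of algebraic classes, so it is algebraic. It equals $\sum_{t}b_{t\gamma}c_{t\gamma}\omega_{t\gamma}$.

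\textbf{Step 2: the coefficients are Galois conjugates.} By the proof of \cref{thmweightedshioda2}, rationality of the class gives $b_{t\gamma}=\tau_t(b_\gamma)$. So we have an algebraic cycle of the form
$$\sum_t\tau_t(b)\,c_{t\gamma}\omega_{t\gamma},\qquad b=b_\gamma\neq0 .$$

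\textbf{Step 3: generate all $\lambda_{\gamma,j}$.} Now apply the non-rational operators $g^*$ directly. Each $g^*$ multiplies $c_{t\gamma}\omega_{t\gamma}$ by $(t\gamma)(g)=\tau_t(\gamma(g))$. Hence, for any $x$ in the $\Q$-span $\Q[\gamma(G)]$ of the values of $\gamma$, we obtain algebraic classes
$$\sum_t\tau_t(bx)\,c_{t\gamma}\omega_{t\gamma} .$$
The main obstacle is that $\gamma(G)$ may only generate a subfield $\Q(\zeta_e)$ with $e\mid d$, so the multiplier $x$ does not range over all of $\Q(\zeta_d)$. I would handle this by noting that the map $y\mapsto\sum_t\tau_t(y)c_{t\gamma}\omega_{t\gamma}$ is a $\Q$-linear isomorphism from $\Q(\zeta_d)$ onto the rational span $W_\gamma:=\bigoplus_t V(t\gamma)_{\Q(\zeta_d)}\cap H^n(X^n_m,\Q)$, which has dimension $\varphi(d)$. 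The algebraic part $A_\gamma\subseteq W_\gamma$ then corresponds to a nonzero $\Q(\zeta_e)$-submodule of $\Q(\zeta_d)$.

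\textbf{Step 4: close the gap using $O(\gamma)$ and the rest of $\mathfrak{B}^n_m$.} To enlarge $A_\gamma$ to all of $W_\gamma$, I would exploit the Galois automorphisms $t$ fixing $\Q(\zeta_e)$. If $t\equiv1 \pmod e$ on the relevant components, then $t\gamma$ and $\gamma$ have the same values on $G$. In that case $t\gamma=\gamma$ as characters, and the Galois orbit sum already has repetitions. The correct count is then $|O(\gamma)|=\varphi(e)$, and the basis in \cref{rembasisHodge} is really indexed modulo this redundancy. Checking this dimension count carefully is the step I expect to be delicate.

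\textbf{Step 5: the equivalence.} Granting Step 4, every Hodge cycle in the $O(\gamma)$-block is algebraic, which gives the \emph{more precisely} statement. The equivalence then follows. If $\mathfrak{C}^n_m=\mathfrak{B}^n_m$, the $\lambda_{\gamma,j}$ over all orbits span the primitive Hodge classes, and the non-primitive Hodge classes are multiples of powers of the hyperplane class, hence algebraic. Conversely, if the Hodge conjecture holds, each $\lambda_{\gamma,j}$ is algebraic. Taking $Z$ to be components of a representing cycle, some $Z$ has $b_\gamma\neq0$, so $\gamma\in\mathfrak{C}^n_m$.
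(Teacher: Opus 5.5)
Your route works and differs from the paper's, but as written there is a gap at the decisive step.

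\textbf{The gap.} Steps 1--2 are correct. So is the construction in Step 3 of the algebraic classes $\Phi(bx)$, where $\Phi(y):=\sum_{t}\tau_t(y)\,c_{t\gamma}\omega_{t\gamma}$ and $x\in\Q[\gamma(G^n_m)]$. The problem is the claim that follows. Suppose $\gamma$ has order $e<d$ in $\widehat{G}^n_m$. Then $\gamma(G^n_m)=\mu_e$, and the stabilizer of $\gamma$ is $H=\{t\equiv 1 \bmod e\}$. Grouping the sum over cosets of $H$ gives
\[
\Phi(y)=\Phi_e\bigl(\mathrm{Tr}_{\Q(\zeta_d)/\Q(\zeta_e)}(y)\bigr),\qquad \Phi_e(z):=\sum_{s\in(\Z/e\Z)^\times}\tau_s(z)\,c_{s\gamma}\omega_{s\gamma}.
\]
So your claim that $\Phi$ is an isomorphism from $\Q(\zeta_d)$ onto a $\varphi(d)$-dimensional space is false. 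It kills the elements of relative trace zero, and the block has dimension $|O(\gamma)|=\varphi(e)$. For example, $\gamma=(2,2,2,2)\in\mathfrak{B}^2_4$ is fixed by $t=3$, and $\lambda_{\gamma,1}=0$. Step 4, where this must be repaired, is only ``granted''.

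\textbf{The repair.} It is one line and needs no dimension count. Step 2 gives $\tau_h(b)=b$ for $h\in H$, so $b\in\Q(\zeta_e)^\times$. Set $N:=[\Q(\zeta_d):\Q(\zeta_e)]$ and $x_j:=\mathrm{Tr}_{\Q(\zeta_d)/\Q(\zeta_e)}(\zeta_d^j)/(Nb)\in\Q(\zeta_e)=\Q[\gamma(G^n_m)]$. Then your Step 3 class is
\[
\Phi(bx_j)=\Phi_e(Nbx_j)=\Phi_e\bigl(\mathrm{Tr}_{\Q(\zeta_d)/\Q(\zeta_e)}(\zeta_d^j)\bigr)=\Phi(\zeta_d^j)=\lambda_{\gamma,j},
\]
which is therefore algebraic. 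For $e=d$ this is simply $x_j=\zeta_d^j/b$.

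\textbf{Comparison with the paper.} The paper does not isolate one orbit or use any field structure. It computes $\dim_\Q$ of the span $W$ of all translates $g^*[Z]_\prim$ as the rank of $\bigl(b_\gamma\gamma(g)\bigr)_{\gamma\in\mathfrak{C}^n_m(Z),\,g\in G^n_m}$. This rank is $\#\mathfrak{C}^n_m(Z)$ because the character table is invertible. It then compares this with the dimension of the rational span $V$ of the $\lambda_{\gamma,j}$ over the relevant orbits. This handles all orbits at once and is blind to stabilizers, provided $\dim_\Q V$ is also counted as $\#\mathfrak{C}^n_m(Z)$.

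The paper instead writes that count as $\#(\mathfrak{C}^n_m(Z)/\mathrm{Gal})\cdot\varphi(d)$. This tacitly assumes free orbits, as does the basis claim in \cref{rembasisHodge}, which is the same issue you flagged. The equality $\dim_\Q W=\dim_\Q V$ survives the correction.

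Once completed, your approach gives a more explicit picture. You get a rational projector onto each orbit block, and $\Q[G^n_m]$ acts on that block through multiplication by $\Q(\zeta_e)$. So any single nonzero algebraic class in the block produces every $\lambda_{\gamma,j}$. Your Step 5 matches the paper.
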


\begin{proof}
Let $\gamma\in\mathfrak{B}^n_m$, then by \cref{rembasisHodge}
$$
\lambda:=\sum_{t\in(\Z/d\Z)^\times}c_{t\gamma}\omega_{t\gamma}\in H^{\frac{n}{2},\frac{n}{2}}(X^n_m)_\prim\cap H^n(X^n_m,\Q) .
$$
If $X^n_m$ satisfies the Hodge conjecture, then $\lambda=\sum_{i=1}^kq_i[Z_i]$ for some $q_i\in \Q^\times$ and $Z_i\subseteq X^n_m$ algebraic subvarieties. Since $\lambda$ is primitive, then $\lambda=\lambda_\prim=\sum_{i=1}^kq_i[Z_i]_\prim$. Writing each $[Z_i]_\prim=\sum_{\alpha\in\mathfrak{B}^n_m}b_{i,\alpha}\cdot c_\alpha\omega_\alpha$ it follows that $1=\sum_{i=1}^kq_ib_{i,\gamma}$ and so $b_{i,\gamma}\neq 0$ for some $i$. Thus $\gamma\in\mathfrak{C}^n_m$.

Conversely, given an algebraic subvariety $Z\subseteq X^n_m$, by \cref{rembasisHodge} it is clear that
$$
[Z]_\prim\in \bigoplus_{O(\gamma)\in \mathfrak{C}^n_m(Z)/\text{Gal}(\Q(\zeta_d)/\Q)}\bigoplus_{j=0}^{\varphi(d)-1}\Q\cdot \lambda_{\gamma,j}=:V .
$$
Let us now write
$$
[Z]_\prim=\sum_{\gamma\in\mathfrak{C}^n_m(Z)}b_\gamma\cdot c_\gamma\omega_\gamma .
$$
For any $g\in G^n_m$ we have that
$$
[g^{-1}(Z)]_\prim=g^*[Z]_\prim=\sum_{\gamma\in\mathfrak{C}^n_m(Z)}b_\gamma\gamma(g)\cdot c_\gamma\omega_\gamma
$$
is also an algebraic cycle. Hence, using the spectral basis, we can compute the dimension of  space generated by these algebraic cycles 
$$
W:=\sum_{g\in G^n_m}\Q[g^{-1}(Z)]_\prim\subseteq V
$$
as the rank of the matrix
$$
M=(b_\gamma\cdot \gamma(g))_{\gamma\in \mathfrak{C}^n_m(Z), g\in G^n_m} .
$$
Since the matrix $A=(\alpha(g))_{\alpha\in\widehat{G}^n_m, g\in G^n_m}$ is invertible, and $$M=(b_\gamma\cdot \delta_{\gamma,\alpha})_{\gamma\in\mathfrak{C}^n_m(Z), \alpha\in\widehat{G}^n_m}\cdot A ,$$ then the rank of $M$ equals $\#\mathfrak{C}^n_m(Z)$ and so
$$
\dim_\Q W=\#\mathfrak{C}^n_m(Z)=\#(\mathfrak{C}^n_m(Z)/\text{Gal}(\Q(\zeta_d)/\Q))\cdot\varphi(d)=\dim_\Q V .
$$
This shows that $V=W$ and so $\lambda_{\gamma,j}$ is an algebraic cycle for all $\gamma\in \mathfrak{C}^n_m(Z)$ and $j=0,\ldots,\varphi(d)-1$. The result follows from the fact that $\bigcup_{Z\subseteq X^n_m}\mathfrak{C}^n_m(Z)=\mathfrak{C}^n_m=\mathfrak{B}^n_m$ and that $\{\lambda_{\alpha,j}: O(\alpha)\in\mathfrak{B}^n_m/\text{Gal}(\Q(\zeta_d)/\Q), j=0,\ldots,\varphi(d)-1\}$ is a basis for the space of primitive Hodge cycles.
\end{proof}

\begin{rem}\label{remgalactionalgchar}
An immediate consequence of the previous proposition is that for all $\alpha\in\mathfrak{B}^n_m$ and all $t\in(\Z/d\Z)^\times$
$$
\alpha\in\mathfrak{C}^n_m\Longleftrightarrow t\cdot\alpha\in\mathfrak{C}^n_m .
$$
\end{rem}

\begin{coro}\label{coroalgcharcyclo}
Let $\alpha\in \mathfrak{A}^n_m$, then $\alpha\in\mathfrak{C}^n_m$ if and only if $c_\alpha\omega_\alpha\in H^n(X^n_m,\Q)_\text{alg}\otimes_\Q\Q(\zeta_d)$.
\end{coro}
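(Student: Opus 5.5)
Both directions will come straight out of \cref{propHCchar} and the spectral description of \cref{rembasisHodge}.

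\emph{Forward direction.} Suppose $\alpha\in\mathfrak{C}^n_m$. By \cref{propHCchar}, every $\lambda_{\alpha,j}=\sum_{t\in(\Z/d\Z)^\times}\zeta_d^{tj}c_{t\alpha}\omega_{t\alpha}$, for $j=0,\ldots,\varphi(d)-1$, is algebraic. These $\varphi(d)$ classes are related to the $\varphi(d)$ vectors $c_{t\alpha}\omega_{t\alpha}$ by the invertible Vandermonde matrix $(\zeta_d^{tj})_{j,t}$, which has entries in $\Q(\zeta_d)$. (This is the same invertibility used in the proof of \cref{thmweightedshioda2}.) Inverting the matrix expresses each $c_{t\alpha}\omega_{t\alpha}$, and in particular $c_\alpha\omega_\alpha$ at $t=1$, as a $\Q(\zeta_d)$-linear combination of the $\lambda_{\alpha,j}$. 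Hence $c_\alpha\omega_\alpha\in H^n(X^n_m,\Q)_\text{alg}\otimes_\Q\Q(\zeta_d)$.

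\emph{Converse.} Suppose $c_\alpha\omega_\alpha\in H^n(X^n_m,\Q)_\text{alg}\otimes_\Q\Q(\zeta_d)$. Write
\[
c_\alpha\omega_\alpha=\sum_{i=1}^k q_i[Z_i],\qquad q_i\in\Q(\zeta_d),
\]
with the $Z_i$ algebraic subvarieties of dimension $\frac n2$ (this needs $n$ even, implicit in the statement).
\begin{enumerate}
\item Take primitive parts. The left side is already primitive, so $c_\alpha\omega_\alpha=\sum_i q_i[Z_i]_\prim$.
\item Expand each $[Z_i]_\prim=\sum_{\gamma}b_{i,\gamma}c_\gamma\omega_\gamma$ in the spectral basis. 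By \cref{thmweightedshioda2} the sum runs over $\gamma\in\mathfrak{B}^n_m$.
\item Compare coefficients of $c_\alpha\omega_\alpha$, using that the $\omega_\gamma$ are linearly independent by \cref{thmweightedshioda1}. This gives $1=\sum_i q_ib_{i,\alpha}$.
\item Conclude. From step 3, $b_{i,\alpha}\neq0$ for some $i$. Step 2 then forces $\alpha\in\mathfrak{B}^n_m$, and by \cref{defialgchar} we get $\alpha\in\mathfrak{C}^n_m(Z_i)\subseteq\mathfrak{C}^n_m$.
\end{enumerate}

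\emph{Main obstacle.} The only delicate point is that $\alpha$ is assumed only to lie in $\mathfrak{A}^n_m$, not in $\mathfrak{B}^n_m$. This causes no trouble: in the forward direction $\alpha\in\mathfrak{C}^n_m\subseteq\mathfrak{B}^n_m$ by definition, and in the converse the coefficient comparison in step 4 forces $\alpha\in\mathfrak{B}^n_m$. Everything else is routine linear algebra.
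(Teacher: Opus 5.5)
Your proof is correct and follows the paper's. The forward direction is the same Vandermonde inversion applied to the algebraic cycles $\lambda_{\alpha,j}$ from \cref{propHCchar}. In the converse, the paper first takes the Galois trace $\sum_\tau\tau(c_\alpha\omega_\alpha)=\lambda_{\alpha,0}$, which makes the coefficients rational, $q_i=\sum_\tau\tau(r_i)$, before concluding. You instead compare the coefficient of $c_\alpha\omega_\alpha$ directly with $\Q(\zeta_d)$-coefficients, which is slightly more direct, since \cref{defialgchar} only requires some $Z_i$ with $b_{i,\alpha}\neq 0$.
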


\begin{proof}
If $\alpha\in\mathfrak{C}^n_m$, then by \cref{propHCchar} all $\lambda_{\alpha,j}$ are algebraic for $j=0,\ldots,\varphi(d)-1$. Inverting the Vandermonde matrix it is clear that
$$
c_\alpha\omega_\alpha\in \bigoplus_{t\in (\Z/d\Z)^\times}V(t\alpha)_{\Q(\zeta_d)}=\bigoplus_{j=0}^{\varphi(d)-1}\Q(\zeta_d)\cdot \lambda_{\alpha,j}\subseteq H^n(X^n_m,\Q)_\text{alg}\otimes_\Q\Q(\zeta_d) .
$$
Conversely, if
$$
c_\alpha\omega_\alpha=\sum_{i=1}^kr_i[Z_i]_\prim
$$
for some $r_i\in\Q(\zeta_d)$, then
$$
\lambda_{\alpha,0}=\sum_{\tau\in\text{Gal}(\Q(\zeta_d)/\Q)}\tau(c_{\alpha}\omega_{\alpha})=\sum_{i=1}^kq_i[Z_i]_\prim
$$
for $q_i=\sum_{\tau\in\text{Gal}(\Q(\zeta_d)/\Q)}\tau(r_i)\in\Q$, and so $\alpha\in\mathfrak{C}^n_m$.
\end{proof}

\subsection{Lifting and permuting characters}

Let $n\in \Z_{\ge 0}$ be an even number. For $m,m'\in \N^{n+2}$ we say that $m'$ divides $m$ if $m_i'\mid m_i$ for all $i=0,\ldots,n+1$, and denote it $m'\mid m$. Let us denote by $k_i:=\frac{m_i}{m_i'}$.

\begin{defi}
The lift function
$
(\cdot)^{(m)}:\mathfrak{A}^n_{m'}\to\mathfrak{A}^n_{m}
$ is given by taking $\alpha=(a_0,\ldots,a_{n+1})\in\mathfrak{A}^n_{m'}$ to
$$
\alpha^{(m)}:= (k_0a_0,\ldots, k_{n+1}a_{n+1})\in\prod_{i=0}^{n+1}(\Z/m_i\Z) .
$$
\end{defi}

\begin{rem}
If $\ell\in\N$ is such that $m_i'\mid \ell$ for all $i=0,\ldots,n+1$, we denote by $\alpha^{(\ell)}\in\mathfrak{A}^n_{\ell}$ its lift to the homogenous Fermat variety $X^n_{\ell}$.
\end{rem}

\begin{prop}\label{propliftchar}
Let $\alpha\in\mathfrak{A}^n_{m'}$ for $m'\mid m$. We have
\begin{equation}
\alpha\in\mathfrak{B}^n_{m'} \ \Longleftrightarrow \ \alpha^{(m)}\in\mathfrak{B}^n_{m} ,
\end{equation}
\begin{equation}
\alpha\in\mathfrak{C}^n_{m'}\ \Longleftrightarrow \ \alpha^{(m)}\in \mathfrak{C}^n_{m} .    
\end{equation}
\end{prop}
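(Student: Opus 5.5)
Both equivalences will come from the finite map $f:X^n_m\to X^n_{m'}$, $f(x)=[x_0^{k_0}:\cdots:x_{n+1}^{k_{n+1}}]$, used in the corollary after \cref{propalgcycfinitemap}. I will show that $f^*$ sends the spectral line $V(\alpha)$ onto $V(\alpha^{(m)})$, up to a nonzero constant.

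\textbf{Step 1: the lift is well defined and preserves $|\cdot|$.} First check that $\alpha^{(m)}$ lies in $\mathfrak{A}^n_m$. Since $a_i\neq 0$ in $\Z/m_i'\Z$, we get $k_ia_i\neq 0$ in $\Z/m_i\Z$. Write $d'=\mathrm{lcm}(m')$, so $d'\mid d$, and set $v_i'=d'/m_i'$. Then $v_ik_i=(d/m_i)(m_i/m_i')=(d/d')v_i'$. Hence
$$\sum_i v_ik_ia_i=\frac{d}{d'}\sum_i v_i'a_i\equiv 0 \pmod d.$$
Next, $\langle k_ia_i\rangle_{m_i}=k_i\langle a_i\rangle_{m_i'}$, which gives $|\alpha^{(m)}|=\frac{d}{d'}|\alpha|$. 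For the Galois action, any $t\in(\Z/d\Z)^\times$ reduces to a unit mod $d'$, and reduction $(\Z/d\Z)^\times\to(\Z/d'\Z)^\times$ is surjective. Since $(t\alpha)^{(m)}=t\cdot\alpha^{(m)}$, the condition $|t\alpha^{(m)}|=d(\frac n2+1)$ for all $t$ is equivalent to $|t\alpha|=d'(\frac n2+1)$ for all $t$. This proves the $\mathfrak{B}$ equivalence directly from \eqref{eqHodgechar}.

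\textbf{Step 2: $f^*V(\alpha)=V(\alpha^{(m)})$.} Let $\beta=(\langle a_i\rangle-1)$ be the exponent attached to $\alpha$, and let $F'=\sum x_i^{m_i'}$. Pulling back the form $x^\beta\Omega'/F'^{q+1}$ along the map $x_i\mapsto x_i^{k_i}$ of weighted projective spaces, one computes $f^*(x^\beta\Omega')=c\,x^{\tilde\beta}\Omega$ for a nonzero constant $c$. Here $\tilde\beta_i=k_i\langle a_i\rangle-1$, because $d(x_i^{k_i})=k_ix_i^{k_i-1}dx_i$ and the weights rescale compatibly. The pulled-back pole is exactly $F^{q+1}$, and pullback commutes with residues. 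Therefore $f^*\omega_\alpha$ is a nonzero multiple of $\omega_{\alpha^{(m)}}$. Alternatively, one can argue by equivariance: $f$ intertwines the natural surjection $G^n_m\to G^n_{m'}$, and $\alpha$ composed with that surjection is $\alpha^{(m)}$. Hence $f^*V(\alpha)\subseteq V(\alpha^{(m)})$, and $f^*$ is injective.

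\textbf{Step 3: $\mathfrak{C}$ equivalence.} I use \cref{coroalgcharcyclo}. By Step 2, $f^*$ is defined over $\Q$, so $f^*(c_\alpha\omega_\alpha)=r\,c_{\alpha^{(m)}}\omega_{\alpha^{(m)}}$ with $r\in\Q(\zeta_d)^\times$. Both sides lie in $H^n(\cdot,\Q(\zeta_d))$ and the target line is one-dimensional over $\Q(\zeta_d)$, which is why $r$ lies in $\Q(\zeta_d)$.

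If $\alpha\in\mathfrak{C}^n_{m'}$, then $c_\alpha\omega_\alpha$ is a $\Q(\zeta_d)$-combination of algebraic classes. By \cref{propalgcycfinitemap}, $f^*$ sends algebraic classes to algebraic classes, so $c_{\alpha^{(m)}}\omega_{\alpha^{(m)}}$ is algebraic with $\Q(\zeta_d)$-coefficients, and $\alpha^{(m)}\in\mathfrak{C}^n_m$.

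Conversely, suppose $\alpha^{(m)}\in\mathfrak{C}^n_m$. The equality $H^{n}(Y)_{\rm alg}=H^n(X)_{\rm alg}\cap H^n(Y)$ of \cref{propalgcycfinitemap}, tensored with $\Q(\zeta_d)$, shows that $c_\alpha\omega_\alpha$ is algebraic. Tensoring is harmless because intersection commutes with flat base change. Alternatively, apply $f_*$ and use $f_*f^*=\deg f$.

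\textbf{Main obstacle.} The delicate point is Step 2 in the weighted setting: verifying that $f^*$ really lands in the primitive eigenline and is nonzero there. The equivariance argument settles the eigenline; injectivity of $f^*$ settles nonvanishing. Beyond that, one only needs to check that the Galois/cyclotomic bookkeeping is consistent between $d'$ and $d$, which is handled by working over $\Q(\zeta_d)\supseteq\Q(\zeta_{d'})$.
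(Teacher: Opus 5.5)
Your proof is correct and follows the paper's route. For $\mathfrak{B}$ it uses the same computation $|t\alpha^{(m)}|=\frac{d}{d'}|t\alpha|$ together with surjectivity of $(\Z/d\Z)^\times\to(\Z/d'\Z)^\times$; for $\mathfrak{C}$ it pulls back along $f$ in one direction and pushes forward (via \cref{propalgcycfinitemap}) in the other. The only variation is that you get $f^*V(\alpha)=V(\alpha^{(m)})$ from equivariance and injectivity of $f^*$ and then invoke \cref{coroalgcharcyclo}, which spares you the explicit constants $f^*\omega_\alpha=\frac{k_0\cdots k_{n+1}}{\deg(f)}\omega_{\alpha^{(m)}}$ and $c_{t\alpha^{(m)}}=\deg(f)k_0\cdots k_{n+1}c_{t\alpha}$ that the paper computes when comparing $\lambda_{\alpha^{(m)},0}$ with $f^*\lambda_{\alpha,0}$.
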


\begin{proof}
For the first equivalence note that for $d:=\text{lcm}(m)$, $v_i:=\frac{d}{m_i}$, $d':=\text{lcm}(m')$, $v_i':=\frac{d'}{m_i'}$ and $t\in (\Z/d\Z)^\times$ we have
$$
|t\cdot \alpha^{(m)}|=\sum_{i=0}^{n+1}v_i\langle t\cdot k_ia_i\rangle=\sum_{i=0}^{n+1}v_ik_i\langle t\cdot a_i\rangle=\frac{d}{d'}\sum_{i=0}^{n+1}v_i'\langle t\cdot a_i\rangle=\frac{d}{d'}|t\cdot\alpha| .
$$
The result follows from the surjectivity of $(\Z/d\Z)^\times\to(\Z/d'\Z)^\times$. 

For the second equivalence consider the covering map
$$
f:X^n_{m}\to X^n_{m'}
$$
$$
(x_0:\cdots:x_{n+1})\mapsto(x_0^{k_0}:\cdots:x_{n+1}^{k_{n+1}}) .
$$
Given $Z\in\text{CH}^\frac{n}{2}(X^n_{m'})$ such that $[Z]_\prim=\sum_{\alpha\in\mathfrak{C}^n_{m'}(Z)}b_\alpha\cdot c_\alpha\omega_\alpha$, it follows by \cref{propalgcycfinitemap} that $[f^{-1}(Z)]_\prim=\frac{1}{\deg(f)}f^*[Z]_\prim=\frac{1}{\deg(f)}\sum_{\alpha\in\mathfrak{C}^n_{m'}(Z)}b_\alpha\cdot c_\alpha f^*\omega_\alpha$. Noting that $f^*\omega_\alpha=\text{res}\left(f^*\left(\frac{x^\beta\Omega}{F^{\frac{n}{2}+1}}\right)\right)=\frac{k_0\cdots k_{n+1}}{\deg(f)}\omega_{\alpha^{(m)}}$, it follows that
$$
\mathfrak{C}^n_{m}(f^{-1}(Z))=(\mathfrak{C}^n_{m'}(Z))^{(m)} .
$$
Conversely, if $\alpha\in\mathfrak{B}^n_{m'}$ is such that $\alpha^{(m)}\in\mathfrak{C}^n_{m}$ then, by \cref{propHCchar}, the Hodge cycle
$$
\lambda_{\alpha^{(m)},0}=\sum_{t\in(\Z/d\Z)^\times}c_{t\alpha^{(m)}}\omega_{t\alpha^{(m)}}\in H^{\frac{n}{2},\frac{n}{2}}(X^n_{m})_\prim\cap H^n(X^n_{m},\Q)
$$
is algebraic. Hence $\lambda_{\alpha^{(m)},0}=\sum_{i=1}^kq_i[W_i]_\prim$ for some $q_i\in\Q$ and $W_i\in\text{CH}^\frac{n}{2}(X^n_{m})$. By \eqref{eqcalpha} we have
$$
c_{t\alpha^{(m)}}=\frac{(2\pi i)(-1)^{n+1}d\prod_{i=0}^{n+1}m_i}{B\left(\frac{\langle ta_0\rangle k_0}{m_0},\ldots,\frac{\langle ta_{n+1}\rangle k_{n+1}}{m_{n+1}}\right)}=\deg(f)k_0\cdots k_{n+1}c_{t\alpha}
$$
and so
$$
\lambda_{\alpha^{(m)},0}=\deg(f)^2\sum_{t\in(\Z/d\Z)^\times}c_{t\alpha}f^*(\omega_{t\alpha})=\frac{\varphi(d)}{\varphi(d')}\deg(f)^2\cdot f^*\lambda_{\alpha,0} .
$$
Applying $f_*$ to the above equality, it follows by \cref{propalgcycfinitemap} that
$$
\lambda_{\alpha,0}=\frac{\varphi(d')}{\varphi(d)\deg(f)^3}\sum_{i=1}^kq_if_*[W_i]_\prim=\frac{\varphi(d')}{\varphi(d)\deg(f)^4}\sum_{i=1}^kq_i[f(W_i)]_\prim
$$
is an algebraic cycle, and so $\alpha\in\mathfrak{C}^n_{m'}$.
\end{proof}

The lift function corresponds with the pull-back map via
$$
f:X^n_{m}\to X^n_{m'}
$$
$$
x=(x_0:\cdots:x_{n+1})\mapsto x^k=(x_0^{k_0}:\cdots:x_{n+1}^{k_{n+1}})
$$
at the level of characters. It is natural to ask about the function corresponding to the push-forward map for characters. The map $f:X^n_{m}\to X^n_{m'}$ induces a surjective map 
$$
g=(\zeta_{m_0}^{i_0},\ldots,\zeta_{m_{n+1}}^{i_{n+1}})\in G^n_{m}\mapsto g^k=(\zeta_{m_0}^{i_0k_0},\ldots,\zeta_{m_{n+1}}^{i_{n+1}k_{n+1}})=(\zeta_{m_0'}^{i_0},\ldots,\zeta_{m_{n+1}'}^{i_{n+1}})\in G^n_{m'} .
$$
Identifying $g\in G^n_{m}\hookrightarrow\text{Aut}(X^n_{m})$ it is easy to see that
$$
f\circ g=g^k\circ f
$$
for all $g\in G^n_{m}$, and so
$$
f_*\circ g_*=g^k_*\circ f_* .
$$
Noting that $\overline{g}_*=g^*$, we obtain (after taking conjugates in the above equality) that
$$
f_*\circ g^*=(g^k)^*\circ f_*:H^n(X^n_{m},\C)_\prim\to H^n(X^n_{m'},\C)_\prim .
$$
In consequence, given $\omega_\alpha\in V(\alpha)\subseteq H^n(X^n_{m},\C)_\prim$ and $g\in G^n_{m}$ it holds
$$
(g^k)^*(f_*\omega_\alpha)=f_*(g^*\omega_\alpha)=\alpha(g)\cdot f_*\omega_\alpha  .
$$
This implies that if $f_*\omega_\alpha\neq 0$, then it is an eigenvector for $G^n_{m'}$, hence there exists some $\widetilde\alpha\in \widehat{G}^n_{m'}$ such that
$$
\widetilde\alpha(g^k)=\alpha(g)
$$
for all $g\in G^n_{m}$, i.e. $\alpha=\widetilde\alpha^{(m)}$. All these implies that 
\begin{prop}
Push-forward induces at the level of characters the inverse of the lift function, and when $\alpha\in \widehat{G}^n_{m}$ is not a lifted character, then $f_*(V(\alpha))=0$.
\end{prop}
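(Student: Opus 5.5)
The argument is to make the discussion preceding the statement precise, using the equivariance relation $f_*\circ g^*=(g^k)^*\circ f_*$ already derived there, together with the projection formula $f_*f^*=\deg(f)\cdot \mathrm{Id}$ from the remark before \cref{propalgcycfinitemap}.

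\textbf{Step 1: eigenvalue constraint.} Let $\alpha\in\widehat{G}^n_m$ and $0\neq\omega\in V(\alpha)$. The displayed computation gives $(g^k)^*(f_*\omega)=\alpha(g)\, f_*\omega$ for all $g\in G^n_m$. Suppose $f_*\omega\neq 0$. Decompose $f_*\omega$ in the spectral decomposition of $H^n(X^n_{m'},\C)_\prim$ under $G^n_{m'}$. Because $g\mapsto g^k$ is surjective onto $G^n_{m'}$, $f_*\omega$ lies in a single eigenspace $V(\widetilde\alpha)$. Moreover $\widetilde\alpha(g^k)=\alpha(g)$ for all $g$.

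\textbf{Step 2: identify $\widetilde\alpha$.} Write $g=(\zeta_{m_i}^{i_j})$, so $g^k=(\zeta_{m_i'}^{i_j})$. For $\widetilde\alpha=(\widetilde a_i)$ we get $\widetilde\alpha(g^k)=\prod\zeta_{m_i'}^{\widetilde a_i i_i}=\prod \zeta_{m_i}^{k_i\widetilde a_i i_i}$. Hence $\alpha=\widetilde\alpha^{(m)}$ componentwise in $\prod\Z/m_i\Z$. One should check that this is compatible with the quotient by $\mu_d$; it is, since both sides are characters of $G^n_m$. If $\alpha$ is not in the image of the lift, no such $\widetilde\alpha$ exists, which forces $f_*V(\alpha)=0$. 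This proves the second claim.

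\textbf{Step 3: push-forward inverts the lift.} It remains to show that for $\alpha=\widetilde\alpha^{(m)}$ the push-forward maps $V(\alpha)$ onto $V(\widetilde\alpha)$, and nonzero-ly. The lift is injective: $k_i a_i=0$ in $\Z/m_i\Z$ iff $a_i=0$ in $\Z/m_i'\Z$. So $\widetilde\alpha$ is uniquely determined by $\alpha$. From the proof of \cref{propliftchar}, $f^*\omega_{\widetilde\alpha}$ is a nonzero multiple of $\omega_{\alpha}$. Applying $f_*$ and using $f_*f^*=\deg(f)\,\mathrm{Id}$ gives $f_*\omega_\alpha=\text{const}\cdot\omega_{\widetilde\alpha}\neq 0$. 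Thus $f_*|_{V(\widetilde\alpha^{(m)})}$ and $f^*|_{V(\widetilde\alpha)}$ are mutually inverse up to nonzero scalars. At the level of characters, $f_*$ realizes the inverse of $(\cdot)^{(m)}$.

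The only delicate point is Step 2's bookkeeping with the kernel $\mu_d$ and the conjugation used to pass from $g_*$ to $g^*$. The latter was already handled before the statement, so I expect no real obstacle.
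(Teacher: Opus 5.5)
Your proposal is correct and follows the paper's argument: the equivariance $f_*\circ g^*=(g^k)^*\circ f_*$, together with the surjectivity of $g\mapsto g^k$ onto $G^n_{m'}$, forces $f_*\omega_\alpha$ (when nonzero) into a single eigenspace $V(\widetilde\alpha)$ with $\widetilde\alpha(g^k)=\alpha(g)$, i.e.\ $\alpha=\widetilde\alpha^{(m)}$. Your Step 3 is a worthwhile completion that the paper leaves implicit: it uses $f^*\omega_{\widetilde\alpha}\in\C^\times\cdot\omega_{\widetilde\alpha^{(m)}}$ and $f_*f^*=\deg(f)\cdot\mathrm{Id}$ to show that $f_*$ is actually nonzero on $V(\widetilde\alpha^{(m)})$.
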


Beside the lift, there is another operation induced by pull-back, which is the permutation. Indeed, given a permutation $\sigma\in \text{Sym}(0,1,\ldots,n+1)=\text{Sym}(n+2)$, it induces an isomorphism
$$
\sigma:X^n_m\xrightarrow{\sim} X^n_{\sigma(m)}
$$
$$
(x_0:\cdots:x_{n+1})\mapsto(x_{\sigma(0)}:\cdots:x_{\sigma(n+1)})
$$
where $\sigma(m):=(m_{\sigma(0)},\ldots,m_{\sigma(n+1)})$.

\begin{defi}
Given $\sigma\in\text{Sym}(n+2)$ and $\alpha=(a_0,\ldots,a_{n+1})\in \widehat{G}^n_m$, we define $$
\sigma(\alpha):=(a_{\sigma(0)},\ldots,a_{\sigma(n+1)})\in \widehat{G}^n_{\sigma(m)} .
$$
\end{defi}

\begin{rem}
It is easy to see that the pull-back by a permutation (and consequently its inverse, the push-forward) induces a correspondence between spectral decompositions, in fact we have
$$
\sigma^*\omega_{\sigma(\alpha)}=\omega_\alpha , \ \ \ \sigma_*\omega_\alpha=\omega_{\sigma(\alpha)} , \ \ \ c_{\sigma(\alpha)}=c_\alpha .
$$
\end{rem}

\begin{prop}\label{proppermalgchar}
Let $\alpha\in\mathfrak{A}^n_m$ and $\sigma\in \text{Sym}(n+2)$, then
\begin{equation}
\alpha\in \mathfrak{B}^n_m\Longleftrightarrow\sigma(\alpha)\in\mathfrak{B}^n_{\sigma(m)}\ ,
\end{equation}
\begin{equation}
\alpha\in \mathfrak{C}^n_m\Longleftrightarrow\sigma(\alpha)\in\mathfrak{C}^n_{\sigma(m)} \ .
\end{equation}
\end{prop}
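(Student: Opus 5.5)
The plan is to treat the permutation isomorphism $\sigma:X^n_m\xrightarrow{\sim}X^n_{\sigma(m)}$ in the same way as the covering map in \cref{propliftchar}. This case is simpler, because $\sigma$ is an isomorphism of degree one and there are no normalization constants to track.

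For the Hodge equivalence I will use a direct combinatorial check. The weights satisfy $v_{\sigma(i)}=d/m_{\sigma(i)}$, and $\text{lcm}(\sigma(m))=d$. Hence for every $t\in(\Z/d\Z)^\times$,
$$
|t\cdot\sigma(\alpha)|=\sum_{i=0}^{n+1}v_{\sigma(i)}\langle ta_{\sigma(i)}\rangle=\sum_{j=0}^{n+1}v_j\langle ta_j\rangle=|t\cdot\alpha|,
$$
since this is only a reindexing of the sum. The same reindexing shows that $\alpha\in\mathfrak{A}^n_m$ if and only if $\sigma(\alpha)\in\mathfrak{A}^n_{\sigma(m)}$: entries are nonzero in both, and $\sum v_ia_i=0$ in both. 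The equivalence for $\mathfrak{B}$ then follows immediately from \eqref{eqHodgechar}.

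For the algebraic equivalence I will pass algebraic cycles through $\sigma$, using the remark just above, namely $\sigma_*\omega_\alpha=\omega_{\sigma(\alpha)}$ and $c_{\sigma(\alpha)}=c_\alpha$.

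Suppose $\alpha\in\mathfrak{C}^n_m$. Choose $Z\subseteq X^n_m$ with
$$
[Z]_\prim=\sum_{\gamma}b_\gamma c_\gamma\omega_\gamma,\qquad b_\alpha\neq 0.
$$
Since $\sigma$ is an isomorphism, $\sigma(Z)\subseteq X^n_{\sigma(m)}$ is an $\frac n2$-dimensional subvariety. By \cref{propalgcycfinitemap} with $\deg\sigma=1$, we have $[\sigma(Z)]=\sigma_*[Z]$. The map $\sigma_*$ preserves primitivity, because it commutes with the Gysin map to the ambient weighted projective space, which $\sigma$ identifies. Therefore
$$
[\sigma(Z)]_\prim=\sum_\gamma b_\gamma c_{\sigma(\gamma)}\omega_{\sigma(\gamma)},
$$
whose coefficient at $\sigma(\alpha)$ is $b_\alpha\neq0$. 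So $\sigma(\alpha)\in\mathfrak{C}^n_{\sigma(m)}$. For the converse, apply the same argument to $\sigma^{-1}$, noting that $\sigma^{-1}(\sigma(\alpha))=\alpha$ and $\sigma^{-1}(\sigma(m))=m$ with the paper's index convention.

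The only real obstacle is checking the remark's identities, chiefly $\sigma^*\omega_{\sigma(\alpha)}=\omega_\alpha$. Pulling back $x^\beta\Omega/F^{q+1}$ permutes the variables. This turns the monomial into the correct permuted one and leaves $F$ invariant up to reordering. The form $\Omega$ changes only by the sign of $\sigma$, which is harmless because it rescales by a nonzero constant and does not affect which coefficients vanish. The equality $c_{\sigma(\alpha)}=c_\alpha$ holds because the multivariate Beta function and $\prod m_i$ are symmetric. Alternatively, one can argue with eigenspaces alone: $\sigma$ conjugates the $G^n_m$-action to the $G^n_{\sigma(m)}$-action, so $\sigma_*V(\alpha)=V(\sigma(\alpha))$, and nonvanishing of coefficients is all that \cref{defialgchar} requires.
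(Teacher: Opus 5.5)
Your proposal is correct and follows the paper's own argument. The Hodge equivalence comes from the reindexing $|t\cdot\sigma(\alpha)|=|t\cdot\alpha|$, and the algebraic equivalence from transporting cycles by $\sigma_*[W]=[\sigma(W)]$ and its inverse (which the paper writes as $\sigma^*[Z]=[\sigma^{-1}(Z)]$), using $\sigma_*\omega_\alpha=\omega_{\sigma(\alpha)}$ and $c_{\sigma(\alpha)}=c_\alpha$; your observation that the pull-back actually picks up a factor $\mathrm{sgn}(\sigma)$ from $\Omega$ is a correct refinement of the paper's remark and, as you say, does not affect which coefficients vanish.
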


\begin{proof}
The first assertion follows from $|\sigma(\alpha)|=|\alpha|$, while the second follows from $\sigma_*[W]=[\sigma(W)]$ and $\sigma^*[Z]=[\sigma^{-1}(Z)]$ for any $W\in \text{CH}^\frac{n}{2}(X^n_m)$ and $Z\in\text{CH}^\frac{n}{2}(X^n_{\sigma(m)})$, which hold by \cref{propalgcycfinitemap}.
\end{proof}

\subsection{Joining and deleting characters}
The join of polynomials has first appeared in singularity theory, see \cite{arn, ho13} and it has inspired the algebraic join  in \cite{franco2023periods}. In this section we explain this in the level of characters. 
\begin{defi}
Given two characters $\alpha=(a_0,\ldots,a_{n+1})\in\widehat{G}^n_m$ and $\alpha'=(a_0',\ldots,a_{n'+1}')\in\widehat{G}^{n'}_{m'}$, we define their join
$$
\alpha*\alpha':=(a_0, \ldots,a_{n+1},a_0',\ldots,a_{n'+1}')\in \widehat{G}^{n+n'+2}_{(m, m')} .
$$
\end{defi}

\begin{prop}\label{propjoinalgchar}
Let $\alpha\in\widehat{G}^n_m$ and $\alpha'\in\widehat{G}^{n'}_{m'}$, then following assertions hold.
\begin{itemize}
    \item[(i)] If $\alpha\in \mathfrak{B}^n_m$, then
    $\alpha'\in\mathfrak{B}^{n'}_{m'} \Longleftrightarrow \alpha * \alpha'\in \mathfrak{B}^{n+n'+2}_{(m,m')}.$
    \item[(ii)] If $\alpha\in \mathfrak{C}^n_m$, then
    $\alpha'\in\mathfrak{C}^{n'}_{m'} \Longleftrightarrow \alpha * \alpha'\in \mathfrak{C}^{n+n'+2}_{(m,m')}.$
\end{itemize}
\end{prop}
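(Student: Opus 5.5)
The plan is to prove part (i) by a direct computation with the weights. Part (ii) I would reduce to the homogeneous case and then use the geometric join of subvarieties together with an equivariance argument.

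\emph{Part (i).} Put $d=\text{lcm}(m)$, $d'=\text{lcm}(m')$ and $D=\text{lcm}(d,d')=\text{lcm}(m,m')$. The weights of $X^{n+n'+2}_{(m,m')}$ are $D/m_i$ and $D/m'_j$. Hence for every $t\in(\Z/D\Z)^\times$ one has
$$|t\cdot(\alpha*\alpha')|=\tfrac{D}{d}|t\cdot\alpha|+\tfrac{D}{d'}|t\cdot\alpha'|,$$
where on the right $t$ acts through its images in $(\Z/d\Z)^\times$ and $(\Z/d'\Z)^\times$. Similarly, the character condition for $\alpha*\alpha'$ splits as $\frac{D}{d}\sum v_ia_i+\frac{D}{d'}\sum v'_ja'_j\equiv 0\pmod D$. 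Since $\alpha\in\mathfrak{A}^n_m$, this holds if and only if $\alpha'$ satisfies its own character condition, and nonvanishing of the entries is clearly compatible with joining. Since $\alpha\in\mathfrak{B}^n_m$, we have $\frac{D}{d}|t\alpha|=D(\frac n2+1)$. So the condition $|t(\alpha*\alpha')|=D(\frac{n+n'+2}{2}+1)$ is equivalent to $|t\alpha'|=d'(\frac{n'}{2}+1)$. Using surjectivity of $(\Z/D\Z)^\times\to(\Z/d'\Z)^\times$, as in \cref{propliftchar}, this gives (i). Parity is automatic: $n$ is even, and $n+n'+2$ is even exactly when $n'$ is.

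\emph{Part (ii), reduction.} Lifting commutes with joining: $(\alpha*\alpha')^{(D)}=\alpha^{(D)}*\alpha'^{(D)}$. By \cref{propliftchar}, algebraicity is invariant under lifts, so it suffices to treat $m=(D,\dots,D)$ and $m'=(D,\dots,D)$, that is, $X^n_D\subseteq\P^{n+1}$, $X^{n'}_D\subseteq\P^{n'+1}$ and $X^{n+n'+2}_D\subseteq\P^{n+n'+3}$.

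\emph{Part (ii), forward direction.} For subvarieties $Z\subseteq X^n_D$ and $W\subseteq X^{n'}_D$ of half dimension, take the projective join
$$J(Z,W)=\{(x:y): x\in \hat Z,\ y\in\hat W\}\subseteq X^{n+n'+2}_D,$$
where hats denote affine cones. It has dimension $\frac n2+\frac{n'}2+1$, which is half the dimension of $X^{n+n'+2}_D$. The key claim is that the spectral coefficients satisfy $b_{\gamma*\gamma'}(J(Z,W))=\kappa_{\gamma,\gamma'}\,b_\gamma(Z)\,b'_{\gamma'}(W)$ with $\kappa_{\gamma,\gamma'}\neq0$. To prove it, pass through the affine pictures of \cref{eqresdiagFermat}: the join of Pham cycles $\delta_{\beta}*\delta_{\beta'}$ is a Pham cycle of the big affine Fermat variety. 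The integrals of $x^\beta\Omega$ over such cycles are given by \eqref{eqperiodsxbeta}, and both the Beta factor and the root-of-unity factor multiply under joining, the Beta factor up to the Gamma identity $B(u*u')=B(u)B(u')\Gamma(\sum u)\Gamma(\sum u')/\Gamma(\sum u+\sum u')$, which is a nonzero rational number times a power of $2\pi i$ when the characters are Hodge. Pairing $[J(Z,W)]$ with $\omega_{\bar\gamma*\bar\gamma'}$ then factors as a nonzero multiple of the product of the pairings of $[Z]$ with $\omega_{\bar\gamma}$ and of $[W]$ with $\omega_{\bar\gamma'}$. Taking $Z$ with $b_\alpha\neq0$ and $W$ with $b'_{\alpha'}\neq0$ gives $\alpha*\alpha'\in\mathfrak{C}$.

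\emph{Part (ii), converse.} Suppose $\alpha*\alpha'$ is algebraic, witnessed by $Y$. Since $-\alpha=(-1)\cdot\alpha$ is algebraic by \cref{remgalactionalgchar}, choose $Z$ with $b_{-\alpha}(Z)\neq 0$. Use the correspondence
$$\Gamma_Z=\overline{\{((x:y),y): x\in\hat Z\}}\subseteq X^{n+n'+2}_D\times X^{n'}_D$$
and consider $(\Gamma_Z)_*[Y]$, which is an algebraic class on $X^{n'}_D$. Because $\Gamma_Z$ is invariant under the diagonal torus actions on the $y$-coordinates, $(\Gamma_Z)_*$ is $G$-equivariant. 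Hence it maps $V(\gamma*\gamma')$ into $V(\gamma')$, with a scalar that is a nonzero multiple of $b_{-\gamma}(Z)$; this scalar is computed by the same period factorization as above, now viewing $(\Gamma_Z)_*$ as a slant product against the class of the cone over $Z$. Taking the $\alpha'$-component gives $b'_{\alpha'}((\Gamma_Z)_*[Y])\neq 0$, so $\alpha'\in\mathfrak{C}^{n'}_{m'}$.

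I expect the main obstacle to be the nonvanishing of the constant $\kappa$, equivalently of the scalar by which $(\Gamma_Z)_*$ acts. This requires carefully identifying the topological join of Pham cycles with cycles on the big Fermat variety, handling the blow-up needed to make the join map a morphism, and checking that no cancellation occurs. I would first try to settle it by the explicit period computation via \eqref{eqperiodsxbeta}, reducing to the multiplicativity of the root-of-unity factors and to the Gamma-function identity above.
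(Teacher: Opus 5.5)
\textbf{Comparison with the paper.} Part (i), the reduction of (ii) to the homogeneous degree-$D$ case via lifts, and the forward direction of (ii) are the paper's argument. The paper does not reprove the factorization you flag as the main obstacle. It quotes \cite[Theorem 1.2]{franco2023periods}, which gives the primitive class of $J(Z,Z')$ as an explicit nonzero constant times $\sum b_\gamma c_\gamma b_{\gamma'}c_{\gamma'}\omega_{\gamma*\gamma'}$. You can cite it the same way. Your Pham-cycle sketch does not yet explain how the projective join of two algebraic cycles relates to joins of vanishing cycles in the affine pieces, and that relation is the real content of the theorem.

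For the converse you take a different route. The paper writes $c_\alpha\omega_\alpha$ and $c_{\alpha*\alpha'}\omega_{\alpha*\alpha'}$ as cycle classes with $\Q(\zeta_d)$-coefficients (\cref{coroalgcharcyclo}), intersects with $J(Z,X^{n'}_d)$, and appeals to Shioda's inductive structure. Your correspondence $\Gamma_Z$ packages ``intersect with $J(Z,X^{n'}_D)$ and project to the $y$-coordinates'' into one map commuting with the $y$-torus $\mu_D^{n'+2}$, which is cleaner. Pairing against $-\alpha$ is the right choice, since $\omega_\gamma$ pairs nontrivially only with $\omega_{-\gamma}$. Your scalar claim is also the transpose of the join formula, because $(\Gamma_Z)^*[W]=[J(Z,W)]$. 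This needs that formula for arbitrary classes in place of $Z'$; the paper itself uses it with a topological class $\delta$.

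\textbf{The gap.} It is in the last step of the converse. Write $(\Gamma_Z)_*\omega_{\gamma*\gamma'}=\lambda_{\gamma,\gamma'}(Z)\,\omega_{\gamma'}$. Equivariance under the $y$-torus controls only $\gamma'$, not $\gamma$. So the coefficient of $\omega_{\alpha'}$ in $(\Gamma_Z)_*[Y]$ is
\[
\sum_{\gamma}\lambda_{\gamma,\alpha'}(Z)\,b_{\gamma*\alpha'}(Y)\,c_{\gamma*\alpha'},
\]
summed over all $\gamma$ with $\gamma*\alpha'\in\mathfrak{C}^{n+n'+2}_D(Y)$ and $-\gamma\in\mathfrak{C}^n_D(Z)$. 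A single subvariety $Y$ will in general involve many characters $\gamma*\alpha'$ with the same $\alpha'$, and $Z$ many characters $-\gamma$. The $\gamma=\alpha$ term is nonzero, but nothing excludes cancellation, so $b'_{\alpha'}((\Gamma_Z)_*[Y])\neq 0$ does not follow.

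You must isolate one eigencomponent first. There are two ways.
\begin{itemize}
\item Follow the paper. By \cref{coroalgcharcyclo}, $c_{\alpha*\alpha'}\omega_{\alpha*\alpha'}$ lies in $H^{n+n'+2}(X^{n+n'+2}_D,\Q)_{\rm alg}\otimes_\Q\Q(\zeta_D)$. Applying the $\Q(\zeta_D)$-linear extension of $(\Gamma_Z)_*$ gives $\lambda_{\alpha,\alpha'}(Z)\,c_{\alpha*\alpha'}\,\omega_{\alpha'}\neq 0$ inside $H^{n'}(X^{n'}_D,\Q)_{\rm alg}\otimes_\Q\Q(\zeta_D)$. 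This is a $\Q(\zeta_D)^\times$-multiple of $c_{\alpha'}\omega_{\alpha'}$, so \cref{coroalgcharcyclo} again gives $\alpha'\in\mathfrak{C}^{n'}_D$.
\item Move $Z$ by the $x$-torus. For $h\in\mu_D^{n+2}$, $\Gamma_{h(Z)}$ is the image of $\Gamma_Z$ under $(x:y)\mapsto(hx:y)$ on the first factor. Hence $(\Gamma_{h(Z)})_*=(\Gamma_Z)_*\circ(h,1)^*$, and the $\gamma$-term above gets multiplied by $\gamma(h)$. Distinct $\gamma$ are distinct characters of $\mu_D^{n+2}$ and the $\gamma=\alpha$ coefficient is nonzero. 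By linear independence of characters, some $h$ makes the $\omega_{\alpha'}$-coefficient of the honest algebraic class $(\Gamma_{h(Z)})_*[Y]$ nonzero.
\end{itemize}
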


\begin{proof}
Let $d'':=\text{lcm}(m,m')=\text{lcm}(d,d')$. The first assertion follows from $|\alpha*\alpha'|=\frac{d''}{d}|\alpha|+\frac{d''}{d'}|\alpha'|$. For the second assertion, note first that by \cref{propliftchar} we can reduce ourselves to the homogeneous same degree case, because $(\alpha*\alpha')^{(d'')}=\alpha^{(d'')}*\alpha'^{(d'')}$. So, from now on we assume $X^n_m=X^n_d$ and $X^{n'}_{m'}=X^{n'}_d$, i.e. both are homogeneous degree $d$ Fermat varieties. Consider two algebraic cycles $Z\in\text{CH}^\frac{n}{2}(X^n_d)$ and $Z'\in\text{CH}^\frac{n'}{2}(X^{n'}_d)$ such that
$$
[Z]_\prim=\sum_{\alpha\in\mathfrak{C}^{n}_d(Z)}b_\alpha c_\alpha\omega_\alpha ,
$$
$$
[Z']_\prim=\sum_{\alpha'\in\mathfrak{C}^{n'}_d(Z')}b_{\alpha'} c_{\alpha'}\omega_{\alpha'} .
$$
By \cite[Theorem 1.2]{franco2023periods} we have that
$$
[J(Z,Z')]_\prim=\frac{(\frac{n+n'+2}{2})!\cdot d}{(2\pi i)\frac{n}{2}!\cdot\frac{n'}{2}!}\sum_{\alpha\in \mathfrak{C}^n_d(Z)}\sum_{\alpha'\in\mathfrak{C}^{n'}_d(Z')}b_\alpha c_\alpha b_{\alpha'}c_{\alpha'}\omega_{\alpha*\alpha'}
$$
where $J(Z,Z')\in\text{CH}^{\frac{n+n'+2}{2}}(X^{n+n'+2}_d)$ is the join of $Z$ and $Z'$. It follows that
$$
\mathfrak{C}^{n+n'+2}_d(J(Z,Z'))=\mathfrak{C}^n_d(Z)*\mathfrak{C}^{n'}_d(Z') .
$$
This proves the first implication. For the converse, if $\alpha\in \mathfrak{C}^n_d$ and $\alpha*\alpha'\in\mathfrak{C}^{n+n'+2}_d$, then by \cref{coroalgcharcyclo} we can write
$$
c_\alpha\omega_\alpha=[Z]  , \ \ \ \ \ 
c_{\alpha *\alpha'}\omega_{\alpha *\alpha'}=[W]  ,
$$
for some $Z\in\text{CH}^\frac{n}{2}(X^n_d)\otimes_\Z\Q(\zeta_d)$ and $W\in\text{CH}^\frac{n+n'+2}{2}(X^{n+n'+2}_d)\otimes_\Z\Q(\zeta_d)$. Let $\delta\in H_{n'}(X^{n'}_d,\Q(\zeta_d))$ such that $\delta^{pd}=c_{\alpha'}\omega_{\alpha'}\in H^{n'}(X^{n'}_d,\Q(\zeta_d))$. Then 
$$
[W]=c_{\alpha*\alpha'}\omega_{\alpha *\alpha'}=\frac{(2\pi i)\frac{n}{2}!\cdot\frac{n'}{2}!\cdot c_{\alpha *\alpha'}}{(\frac{n+n'+2}{2})!\cdot d\cdot c_\alpha c_{\alpha'}}[J(Z,\delta)]=-[J(Z,\delta)]
$$
and so
$$
[J(pt,\delta)]=\frac{1}{\deg(Z\cap Z)}[J(Z,\delta)\cap J(Z,X^{n'}_d)]=\frac{-1}{\deg(Z\cap Z)}[W\cap J(Z,X^{n'}_d)]
$$
is an algebraic cycle with $\Q(\zeta_d)$ coefficients (since $Z\cap Z\in\text{CH}^n(X^n_d)\otimes_\Z\Q(\zeta_d)$), i.e. lies in $H^{2n+n'+2}(X^{n+n'+2}_d,\Q)_\text{alg}\otimes_\Q\Q(\zeta_d)$. Using the inductive structure of homogeneous Fermat varieties (see \cite[Theorem II]{sh79}) one can show that this is equivalent to have $(pt\times\delta)^{pd}\in H^{2n+n'}(X^n_d\times X^{n'}_d,\Q)_\text{alg}\otimes_\Q\Q(\zeta_d)$, which in turn is equivalent to $\delta^{pd}\in H^{n'}(X^{n'}_d,\Q)_\text{alg}\otimes_\Q\Q(\zeta_d)$. Again by \cref{coroalgcharcyclo} we conclude that $\alpha'\in\mathfrak{C}^{n'}_d$.
\end{proof}

\begin{rem}
The previous proposition asserts that both the Hodge property and the algebraic property for characters are preserved under the join operation. Moreover, it also asserts that this operation is reversible in each class.
\end{rem}

\begin{defi}
Let $\alpha\in\mathfrak{A}^n_m$, $\alpha'\in\mathfrak{A}^{n'}_{m'}$ and $\gamma:=\alpha *\alpha'$. We say that $\alpha'$ is obtained by deleting $\alpha$ from $\gamma$.
\end{defi}

\begin{rem}
The previous proposition asserts that deleting a Hodge character from a Hodge character remains Hodge, and deleting an algebraic character from an algebraic character remains algebraic.
\end{rem}

\subsection{Linear and standard characters}

Up to now we have described four operations useful to produce algebraic characters, namely the lift, permutation, join and deletion. But still we have not shown an explicit algebraic character. In this section we recall the classical linear (or decomposable) characters introduced Ran \cite{Ran1980} and Shioda \cite{sh79}, and the standard characters introduced by Aoki \cite{aoki1987}.

\begin{defi}
Let $n\in\N$ even, and any $d\in\N$. The space of linear characters is defined as
$$
    \mathfrak{D}^n_d:=\left\{\alpha\in\mathfrak{A}^n_d: \ \exists \sigma\in\text{Sym}(n+2)\ \text{ s.t. }\ \langle a_{\sigma(2i)}\rangle+\langle a_{\sigma(2i+1)}\rangle=d \ , \ \forall i=0,\ldots,\frac{n}{2} \right\}.
$$
\end{defi}

\begin{prop}\label{proplinchar}
All linear characters are algebraic, i.e. $\mathfrak{D}^n_d\subseteq\mathfrak{C}^n_d$. In fact, for every $\alpha\in\mathfrak{D}^n_d$, there exists a linear subvariety $L\subseteq X^n_d$ of dimension $\frac{n}{2}$ such that $\alpha\in \mathfrak{C}^n_d(L)$.
\end{prop}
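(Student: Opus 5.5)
The plan is to reduce to the case $\sigma=\mathrm{id}$, build $L$ as an iterated join of points on the zero-dimensional Fermat variety, and read off the character content of $[L]_\prim$ from the join formula used in the proof of \cref{propjoinalgchar}.

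\textbf{Reduction to $\sigma=\mathrm{id}$.} Coordinate permutations act on $X^n_d$ as automorphisms and carry linear subvarieties to linear subvarieties. By \cref{proppermalgchar} and the accompanying remark ($\sigma_*\omega_\alpha=\omega_{\sigma(\alpha)}$, $c_{\sigma(\alpha)}=c_\alpha$), we may therefore assume that $\langle a_{2i}\rangle+\langle a_{2i+1}\rangle=d$ for all $i=0,\ldots,\frac n2$. Then $\alpha$ decomposes as the iterated join
$$
\alpha=\gamma_0*\gamma_1*\cdots*\gamma_{n/2},\qquad \gamma_i=(a_{2i},-a_{2i})\in\mathfrak{A}^0_d .
$$
So the statement reduces to the zero-dimensional Fermat variety $X^0_d=\{x_0^d+x_1^d=0\}$, followed by repeated use of the join.

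\textbf{The zero-dimensional case.} $X^0_d$ consists of the $d$ points $p_j=(1:\epsilon\zeta_d^j)$, where $\epsilon^d=-1$. Every character $(a,-a)$ with $a\neq0$ satisfies $|t\cdot(a,-a)|=d=d(\frac02+1)$ for all $t$, so it lies in $\mathfrak{B}^0_d$. Moreover $H^0(X^0_d,\Q)$ is spanned by classes of points, so the Hodge conjecture is trivial here and $\mathfrak{C}^0_d=\mathfrak{B}^0_d$ by \cref{propHCchar}.

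For the refined statement we need more: a single point $p=p_0$ with $(a,-a)\in\mathfrak{C}^0_d(p)$ for every $a\neq0$. The group $G^0_d\cong\mu_d$ acts simply transitively on the points, so $[p]_\prim=[p]-\frac1d\sum_j[p_j]$ is the delta function at $p_0$ minus its average. Its expansion in the eigenbasis is a discrete Fourier transform. The Fourier coefficients of a delta function are the character values at $p_0$, all of which are nonzero, and subtracting the average removes only the trivial character. Since each $c_{(a,-a)}\omega_{(a,-a)}\neq0$ spans $V(a,-a)$, every coefficient $b_{(a,-a)}$ is therefore nonzero, i.e.\ $\mathfrak{C}^0_d(p)=\mathfrak{A}^0_d$.

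\textbf{Iterating the join.} Apply \cite[Theorem 1.2]{franco2023periods}, as in the proof of \cref{propjoinalgchar}. It gives
$$
\mathfrak{C}(J(Z,Z'))=\mathfrak{C}(Z)*\mathfrak{C}(Z'),
$$
with coefficient a nonzero multiple of $b_\alpha c_\alpha b_{\alpha'}c_{\alpha'}$. Iterating over the $\frac n2+1$ points yields the cycle
$$
L=J(p,J(p,\ldots))=\{x_{2i+1}=\epsilon x_{2i},\ i=0,\ldots,\tfrac n2\}\subseteq X^n_d .
$$
This is a linear subspace of dimension $\frac n2$, and $\mathfrak{C}^n_d(L)\supseteq\mathfrak{C}^0_d(p)^{*(n/2+1)}\ni\alpha$. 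Undoing the permutation gives the linear subvariety required for a general $\alpha$.

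The main obstacle is the identification in the last step. One must check that the algebraic join of the zero-dimensional points, as defined in \cite{franco2023periods}, really is the linear space above inside $X^n_d$, and that the primitive-part formula there applies with $n=n'=0$ and iteratively. If that bookkeeping is awkward, I would instead compute $[L]_\prim$ directly by integrating the periods \eqref{eqperiodsomegaalfa} against $L$. The resulting coefficient of $c_\alpha\omega_\alpha$ is a product of root-of-unity factors, and its nonvanishing is again the Fourier argument above.
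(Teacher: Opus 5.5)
Your proof is correct, but it takes a different route from the paper's main argument.

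\textbf{The paper's route.} It fixes the single linear cycle $L=\{x_{2i}-\zeta_{2d}x_{2i+1}=0\}$ and quotes a closed formula for $[L]_\prim$ from Villaflor's thesis (Corollary 8.3). In that formula the coefficient of $\omega_{(d-i_0,i_0,\ldots,d-i_{n/2},i_{n/2})}$ is a nonzero constant times $\zeta_{2d}^{\sum i_j}$, so every character with $\langle a_{2i}\rangle+\langle a_{2i+1}\rangle=d$ visibly lies in $\mathfrak{C}^n_d(L)$. Permutations are then handled by \cref{proppermalgchar}.

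\textbf{Your route.} You factor $\alpha$ as a join of $0$-dimensional characters $(a,-a)$. A discrete Fourier computation on $X^0_d$ shows that one point already carries all of $\mathfrak{A}^0_d$. You then iterate the Franco--Villaflor join formula. This is essentially the argument the paper only sketches in the remark following the proposition ($\mathfrak{B}^0_d=\mathfrak{C}^0_d=\mathfrak{D}^0_d$ plus joins), sharpened so that it produces one explicit cycle.

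\textbf{Comparison.} The paper's route needs only the period computation for linear cycles and yields the coefficients explicitly. It never invokes the join theorem with a zero-dimensional factor. Your route is more structural: it reduces everything to the trivial case of points. It also explains why the coefficients of $[L]_\prim$ factor as products of roots of unity, since each factor is a Fourier coefficient on $X^0_d$.

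\textbf{The dependency you flagged.} Your one external input is that the join formula holds when a factor is $X^0_d$. The paper relies on the same extension, both in that remark and in the subsection on odd-length tuples ($\varepsilon_d*\varepsilon_d\in\mathfrak{D}^0_d=\mathfrak{C}^0_d$). Identifying the iterated join with $\{x_{2i+1}=\epsilon x_{2i}\}$ is immediate: the join is the projectivized product of the affine cones, and each cone $\C\cdot(1,\epsilon)$ is a line.

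\textbf{Your fallback.} ``Integrate \eqref{eqperiodsomegaalfa} against $L$'' is imprecise as stated. Those are periods over Pham cycles in the affine Fermat variety, not over the class of a linear subvariety. Converting them into periods of linear cycles is exactly the nontrivial computation behind the formula the paper cites.
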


\begin{proof}
Consider the linear cycle
$$
L=\{x_0-\zeta_{2d}x_1=x_2-\zeta_{2d}x_3=\cdots=x_n-\zeta_{2d}x_{n+1}=0\}\subseteq X^n_d .
$$
By \cite[Corollary 8.3]{RobertoThesis}
$$
[L]_\prim=\frac{(-1)^{\frac{n}{2}+1}d^\frac{n}{2}\left(\frac{n}{2}\right)!}{(2\pi i)^\frac{n}{2}}\sum_{i_0=1}^{d-1}\sum_{i_1=1}^{d-1}\cdots\sum_{i_\frac{n}{2}=1}^{d-1}\zeta_{2d}^{\sum_{j=0}^\frac{n}{2}i_j}\omega_{(d-i_0,i_0,d-i_1,i_1,\ldots,d-i_\frac{n}{2},i_\frac{n}{2})}  .
$$
This shows that
$$
\left\{\alpha=(a_0,\ldots,a_{n+1})\in\mathfrak{A}^n_d: \ \ \langle a_{2i}\rangle+\langle a_{2i+1}\rangle=d \ , \ \forall i=0,\ldots,\frac{n}{2} \right\}\subseteq\mathfrak{C}^n_d .
$$
Since the rest of the terms of $\mathfrak{D}^n_d$ are obtained by the action of $\text{Sym}(n+2)$, the result follows from \cref{proppermalgchar}.
\end{proof}

\begin{rem}
If one permits to consider $0$-dimensional Fermat varieties, then every element of $\mathfrak{D}^n_d$ is of the form $\sigma(\delta_0*\delta_1*\cdots*\delta_{\frac{n}{2}})$ for $\sigma\in\text{Sym}(n+2)$ and $\delta_i\in\mathfrak{D}^0_d$ for all $i=0,\ldots,\frac{n}{2}$, and so the previous proposition follows from the equality $$
\mathfrak{B}^0_d=\mathfrak{C}^0_d=\mathfrak{D}^0_d.
$$
\end{rem}

\begin{theo}[\cite{Aoki1983}]\label{theoAokilinearcycles}
Let $n\in \N$ even and $d\in \N$, then $\mathfrak{B}^n_d=\mathfrak{D}^n_d$, if and only if, $d=4$ or $d$ is prime or $\gcd(d,(n+2)!)=1$.
\end{theo}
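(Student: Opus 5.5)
The statement has two directions, and I would treat them separately: the sufficiency of the conditions ($\mathfrak{B}^n_d=\mathfrak{D}^n_d$ when $d=4$, $d$ prime, or $\gcd(d,(n+2)!)=1$) and their necessity (an explicit non-linear Hodge character in every other case). Both rest on rewriting the Hodge condition of \cref{thmweightedshioda2}.

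\textbf{Linearizing the Hodge condition.} To $\alpha=(a_0,\ldots,a_{n+1})\in\mathfrak{A}^n_d$ attach the counting function $f_\alpha(x)=\#\{i:a_i=x\}$ on $\Z/d\Z\setminus\{0\}$ and its odd part $g_\alpha(x)=f_\alpha(x)-f_\alpha(-x)$. Since $\langle x\rangle+\langle -x\rangle=d$, we have
\[
|t\alpha|=\tfrac{d}{2}(n+2)+\sum_{x}g_\alpha(x)\bigl(\langle tx\rangle-\tfrac d2\bigr)/2 .
\]
So $\alpha\in\mathfrak{B}^n_d$ if and only if the odd function $g_\alpha$ is annihilated by the Stickelberger-type operator $g\mapsto\bigl(t\mapsto\sum_x g(x)(\langle tx\rangle-\tfrac d2)\bigr)$ on $(\Z/d\Z)^\times$. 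Also, $\alpha\in\mathfrak{D}^n_d$ exactly when $g_\alpha=0$, since the entries can then be paired as $a,-a$. The whole question becomes: which odd functions of small $\ell^1$-norm (at most $n+2$) lie in this kernel?

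\textbf{Sufficiency.} First split $g$ according to $\gcd(x,d)=e$, so that each piece lives on $(\Z/(d/e)\Z)^\times$. Then decompose each piece with Dirichlet characters. The operator acts on an odd character $\chi$ of conductor $f\mid d/e$ by a scalar. Up to Euler factors at primes dividing $d/e$ but not $f$, this scalar is the generalized Bernoulli number $B_{1,\bar\chi}$, which is nonzero because $L(1,\chi)\neq0$.
\begin{itemize}
\item For $d$ prime every $a_i$ is a unit and no Euler factors occur, so the kernel on odd functions is zero. Hence $g_\alpha=0$ and $\alpha$ is linear.
\item For general $d$ the kernel is generated by the distribution relations, for example
\[
\sum_{j=0}^{p-1}\langle x+jd/p\rangle=\langle px\rangle+\tfrac{(p-1)d}{2}\qquad (p\mid d),
\]
which give kernel elements $g$ involving at least $p+1$ entries.
\item When every prime $p\mid d$ satisfies $p>n+2$, no such element fits inside a tuple of length $n+2$, which forces $g_\alpha=0$.
\item The case $d=4$ is a finite check: the only units are $\pm1$, and one verifies the few tuples directly.
\end{itemize}
The main obstacle is to make the description of the kernel precise, i.e. to show that the Euler-factor degeneracies are spanned by these distribution relations and to get a clean lower bound on the support size. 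I would first attempt this by induction on the number of prime factors of $d$, using the lift identity from the proof of \cref{propliftchar} to pass between levels.

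\textbf{Necessity.} If none of the three conditions holds, there is a prime $p\mid d$ with $p\le n+2$, and I would construct an explicit non-linear Hodge character.
\begin{itemize}
\item For odd $p$, take $(a,a+\tfrac dp,\ldots,a+\tfrac{(p-1)d}{p},-pa)$, which has even length $p+1$ and a suitable $a$. It is Hodge by the distribution relation above applied to $ta$ for every $t$.
\item For $p=2$ with $d\neq 2,4$, use $(a,a+\tfrac d2,-2a,\tfrac d2)$.
\end{itemize}
In each case choose $a$ so that no entries cancel in pairs. Then join with linear pairs $(b,-b)$ to reach length $n+2$; the result stays Hodge by \cref{propjoinalgchar}(i). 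Finally check non-linearity by verifying that $g_\alpha\neq0$.
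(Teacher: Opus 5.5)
The paper does not prove this statement; it quotes it from Aoki. Your argument therefore has to stand on its own.

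\textbf{What is fine.} The linearization of the Hodge condition is correct. The case $d$ prime is correct: this is the classical $B_{1,\chi}\neq0$ argument. The necessity direction also works, up to the small fixes at the end.

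\textbf{The gap.} The sufficiency in the case $\gcd(d,(n+2)!)=1$ is the real content of the theorem, and it is not proved. The sentence ``no such element fits inside a tuple of length $n+2$'' is exactly what has to be established, and the route you sketch toward it does not work as described.

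\emph{First problem: the levels do not decouple.} Put $\hat g_e(\chi)=\sum_{y\in(\Z/(d/e)\Z)^\times}g_\alpha(ey)\bar\chi(y)$. Pairing the Hodge condition with an odd primitive character $\chi$ of conductor $f$ gives one single equation
\[
\sum_{e:\ f\mid d/e}\frac{\prod_{\ell\mid d/e}(1-\chi(\ell))}{\varphi(d/e)}\,\hat g_e(\chi)=0 .
\]
This is because every level $e$ with $f\mid d/e$ is sent to a multiple of the same character $\bar\chi$ of $(\Z/d\Z)^\times$. So the kernel consists of cancellations between different levels. It is nonzero even when no Euler factor vanishes. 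For example, take $d=9$ and $s_{3,1}=(1,4,7,6)$, which is Hodge and non-linear:
\begin{itemize}
\item the only relevant conductor is $3$, and all Euler factors equal $1$;
\item the level-$1$ part $(1,4,7)$ contributes $\tfrac16\cdot 6$;
\item the level-$3$ part $(6)$ contributes $\tfrac12\cdot(-2)$;
\item these two contributions cancel.
\end{itemize}
So ``Euler-factor degeneracies'' are not what has to be controlled.

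\emph{Second problem, more serious: generators do not bound the shortest element.} Suppose you knew that the kernel is spanned by distribution relations, each involving at least $p+1$ entries. This holds rationally; integrally one only has \cref{thm:generators_of_Bm}, with the exceptional tuples as extra generators. Even so, nothing follows about the shortest nonzero kernel element coming from a tuple. Integer combinations of generators can cancel on most of their supports, and a lower bound on minimal length in a lattice is not inherited from a generating set.

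The bound ``a non-linear Hodge tuple of degree $d$ has length at least $p+1$, where $p$ is the smallest prime factor of $d$'' is equivalent to the case you want. The paper's lower bounds for $\ell_1$ and $\ell_{2,d}$ in \cref{sec:4.3} are deduced from this theorem, not the other way round. So the hypothesis $p>n+2$ for all $p\mid d$ must enter the analysis itself. One possible starting point:
\begin{itemize}
\item take a level $e$ that is minimal under divisibility among those occurring in $\alpha$;
\item for odd characters primitive modulo $d/e$, only this level appears in the equation above, so $g_e$ is orthogonal to all primitive characters modulo $d/e$;
\item then show that such a $g_e$, together with what the other levels must supply to balance the remaining equations, needs more than $n+2$ entries.
\end{itemize}

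\textbf{Smaller points.}
\begin{itemize}
\item In the necessity part, when $d$ is even you must take $p=2$. For $d=2p'$ the odd-prime tuple is linear for every admissible $a$, e.g.\ $(1,3,5,3)$ for $d=6$.
\item With $a=1$ both constructions work. The tuple $(1,1+\frac d2,d-2,\frac d2)$ is non-linear for every even $d\ge6$. For odd $d$, the entries $1+j\frac dp$ cannot pair among themselves because $\frac dp\ge3$, and at most one of them pairs with $d-p$.
\item The $p=2$ tuple needs $n\ge2$. This is also where the statement needs $\N$ to start, since $\mathfrak{B}^0_d=\mathfrak{D}^0_d$ for every $d$.
\item For $d=4$ the check is not finite, because $n$ is arbitrary. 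However, with multiplicities $c_1,c_2,c_3$ the Hodge condition at $t=1,3$ reads $c_1=c_3$, which gives linearity immediately.
\end{itemize}
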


\begin{defi}\label{defistandardchar}
Let $p\in\N$ be a prime number and $d=pk$, $k\in\N$ such that $k\ge 3$. For each $i\in\Z/d\Z$ such that $p\nmid i$, we define the $p$-standard characters
$$
s_{p,i}:=\left\{\begin{array}{cc}
    (i,k+i, 2k+i,\ldots,(p-1)k+i,d-pi)\in\mathfrak{A}^{p-1}_d & \text{ if }p\ge 3 , \\
    (i,k+i,d-2i,k)\in\mathfrak{A}^2_d & \text{ if }p=2 .
\end{array}\right.
$$
We denote the set of standard characters by
$$
\mathfrak{S}^n_d:=\left\{\begin{array}{cc}
    \{s_{p,i}:p\nmid i\} & \text{ if }n+1=p\ge 5\text{ is prime}  , d=pk, k\ge 3, \\
    \{s_{3,i}:3\nmid p\} & \text{ if }n=2, d=3k, k>1\text{ odd}, \\\{s_{2,i}:2\nmid i\} & \text{ if }n=2,d=6 \text{ or } d=2k, k\ge 4, 3\nmid k, \\\{s_{2,i}:2\nmid i\}\cup\{s_{3,i}:3\nmid i\} & \text{ if }n=2, d=6k, k>1, \\
    \varnothing & \text{ otherwise}.
\end{array}\right.
$$
\end{defi}

\begin{defi}
Let $p\in\N$ be a prime number and $d=pk$ for $k\ge 3$. We define the Aoki-Shioda projective variety $A_{p,d}$ by the equations
$$
A_{2,d}:=\left\{
    x_0^k+x_1^k+ix_2^k=x_3^2-\sqrt[k]{2}x_0x_1=0 \right\}\subseteq\P^3 ,
$$
while for $p\ge 3$ we define
$$
A_{p,d}:=\left\{\begin{array}{cc}
    x_0^{kj}+x_1^{kj}+\cdots+x_{p-1}^{kj}=0 & \text{ for }j=1,\ldots,\frac{p-1}{2}, \\
    x_p^p-\zeta_{2k}\sqrt[k]{p}x_0x_1\cdots x_{p-1}=0 & 
\end{array}\right\}\subseteq\P^p.
$$
\end{defi}

\begin{rem}
It is elementary to show that $A_{2,d}\subseteq X^2_d$ and that $A_{p,d}\subseteq X^{p-1}_d$ for $p\ge 3$ (c.f. \cite{aoki1987} or \cite[\S 17.5]{ho13}).
\end{rem}

\begin{prop}\label{propstandchar}
The standard characters are algebraic, i.e.
$\mathfrak{S}^n_d\subseteq\mathfrak{C}^n_d$.
\end{prop}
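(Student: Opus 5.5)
We will prove that every standard character is algebraic by exhibiting, for each $s_{p,i}\in\mathfrak{S}^n_d$, an algebraic cycle $Z$ with $s_{p,i}\in\mathfrak{C}^n_d(Z)$, namely $Z=A_{p,d}$, the Aoki-Shioda variety. By the Remark preceding the statement, $A_{p,d}\subseteq X^{p-1}_d$ for $p\ge 3$ and $A_{2,d}\subseteq X^2_d$. A dimension count shows it has the right dimension. For $p\ge3$ there are $\frac{p-1}{2}+1$ equations in $\P^p$, so $\dim A_{p,d}=\frac{p-1}{2}=\frac{n}{2}$ with $n=p-1$. For $p=2$ there are two equations in $\P^3$, so $A_{2,d}$ is a curve in the surface $X^2_d$. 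We then need to show that the coefficient $b_{s_{p,i}}$ of $[A_{p,d}]_\prim$ in the spectral basis is nonzero. By \cref{remgalactionalgchar}, together with the invariance of $\mathfrak{C}^n_d$ under $G^n_d$-translates of $Z$ and under permutations (\cref{proppermalgchar}), it is enough to treat one representative $i$ of each Galois orbit. In fact, every $s_{p,i}$ with $p\nmid i$ is a Galois conjugate $t\cdot s_{p,1}$ up to the $\mu_d$-twists already built into the definition, so we may focus on $i=1$.

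The coefficient $b_\alpha$ is computed by pairing. Since $\omega_\alpha$ pairs nontrivially only with $\omega_{-\alpha}$ under the cup product, $b_\alpha$ is proportional to
$$\int_{A_{p,d}}\omega_{-\alpha}=\int_{A_{p,d}}\text{res}\left(\frac{x^{\beta}\Omega}{F^{\frac n2+1}}\right),$$
with $\beta$ the exponent vector attached to $-\alpha$. The plan is to compute this integral by exploiting the structure of $A_{p,d}$. We consider the finite map $X^{p-1}_d\to X^{p-1}_{m}$ that raises the first $p$ coordinates to the $k$-th power. Under this map, $A_{p,d}$ becomes a cycle on a weighted Fermat variety whose first $p$ coordinates $y_j=x_j^k$ satisfy $\sum_j y_j^{r}=0$ for $r=1,\dots,\frac{p-1}{2}$. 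The key point is that the last equation ties $x_p^{p}$ to the product $x_0\cdots x_{p-1}$. We then use \cref{propalgcycfinitemap} and \cref{propliftchar} to transport the computation: $f^*$ and $f_*$ preserve algebraicity and correspond to lift and de-lift of characters.

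An alternative route, which we would try in parallel, is to compute the action of $G^n_d$ on $A_{p,d}$ and decompose $[A_{p,d}]_\prim$ into eigencomponents. The stabilizer of $A_{p,d}$ in $G^n_d$ contains the diagonal elements $(\zeta^{e_0},\dots,\zeta^{e_p})$ preserving all the equations, namely $e_j\equiv e$ modulo $k$ for $j<p$ and $p\,e_p\equiv\sum_j e_j$. Hence $[A_{p,d}]_\prim$ can only have components along characters trivial on this stabilizer. Checking which $\alpha\in\mathfrak{B}^n_d$ satisfy this condition should single out exactly the pattern $(i,k+i,\dots,(p-1)k+i,d-pi)$, together with some linear characters. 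This reduces the problem to showing that the projection of $[A_{p,d}]_\prim$ onto $V(s_{p,i})$ does not vanish.

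The main obstacle is precisely this nonvanishing. To handle it we would express the relevant eigen-projection
$$\sum_{g}\overline{s_{p,i}(g)}\,g^*[A_{p,d}]$$
as a sum over translates. We would then compute its intersection number with the dual cycle, obtained by a similar translate construction or by Pham cycles as in the period computation of \cref{eqperiodsxbeta}, reducing it to a Gauss/Jacobi-sum type expression. That expression should be nonzero by a Stickelberger-type argument. If this becomes too heavy, we would fall back on citing Aoki's explicit intersection computation \cite{aoki1987} (see also \cite[\S 17.5]{ho13}), and verify only that his normalization agrees with the basis $c_\alpha\omega_\alpha$ used here.
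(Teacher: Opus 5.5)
\paragraph{Gap in the reduction to $i=1$.} Treating one representative per Galois orbit is fine. What fails is your claim that every $s_{p,i}$ with $p\nmid i$ lies in the orbit of $s_{p,1}$.

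\begin{itemize}
\item Conjugation by $t\in(\Z/d\Z)^\times$ sends $s_{p,1}$ to a permutation of $s_{p,t}$, and $s_{p,t}$ contains the unit entry $t$.
\item If $\ell:=\gcd(i,d)=\gcd(i,k)>1$, then every entry of $s_{p,i}$ is divisible by $\ell$, so $s_{p,i}$ cannot be such a conjugate.
\item For example, $s_{5,3}=(3,12,21,30,39,30)\in\mathfrak{S}^4_{45}$ is not a conjugate of $s_{5,1}$.
\item Translating $A_{p,d}$ by elements $g\in G^n_d$ does not help, since $\mathfrak{C}^n_d(g^{-1}(Z))=\mathfrak{C}^n_d(Z)$.
\end{itemize}

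The missing ingredient is the lift, which is how the paper proceeds. Write $i=j\ell$. Then $s_{p,i}=(s_{p,j})^{(d)}$, where $s_{p,j}$ has degree $d/\ell=p\cdot\frac{k}{\ell}$ and $j$ is a unit modulo $d/\ell$. In the example, $s_{5,3}$ is the lift of $(1,4,7,10,13,10)=s_{5,1}$ from degree $15$. By \cref{propliftchar} you may therefore assume $i$ is a unit, at the cost of changing the degree. Only then does your Galois argument (\cref{remgalactionalgchar} together with permutation invariance) reduce everything to $s_{p,1}$. If $k/\ell=2$, the tuple downstairs is not standard but is linear, so \cref{proplinchar} covers that case.

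\paragraph{Nonvanishing of the coefficient.} The remaining step is to show that the coefficient of $\omega_{s_{p,1}}$ in $[A_{p,d}]_\prim$ is nonzero. Your fallback is exactly what the paper does: it cites Aoki \cite{aoki1987}. It also mentions that one could expand $[A_{p,d}]_\prim$ with the cycle-class formula for complete intersections of \cite{RobertoThesis}, but does not carry this out. Your own routes would not replace that citation as they stand:
\begin{itemize}
\item The stabilizer analysis can only force coefficients to vanish. It can never show $b_{s_{p,1}}\neq 0$.
\item Incidentally, invariance of $\sum_j x_j^{kr}$ gives $e_j\equiv e \pmod p$, not $\pmod k$.
\item The Jacobi-sum/Stickelberger step is only asserted, not argued.
\end{itemize}
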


\begin{proof}
Let $p\in\N$ prime, $d=pk$, $k\ge 3$ and $i\in\Z/d\Z$ such that $p\nmid i$. Write $i=j\ell$ with $\text{gdc}(i,d)=\text{gcd}(i,k)=\ell$, then
$$
s_{p,i}=(s_{p,j})^{(d)}
$$
where $s_{p,j}\in\mathfrak{A}^{n}_\frac{d}{\ell}$ and $\text{gcd}(j,\frac{d}{\ell})=1$. Thus, by \cref{propliftchar} we reduce ourselves to show that $s_{p,i}\in\mathfrak{C}^n_d$ for $i\in(\Z/d\Z)^\times$. On the other hand, for $i\in(\Z/d\Z)^\times$
$$
s_{p,i}=i\cdot s_{p,1} ,
$$
and so by \cref{remgalactionalgchar} it is enough for us to show that $s_{p,1}\in\mathfrak{C}^n_d(A_{p,d})$. Since the Aoki-Shioda varieties are complete intersections, we can apply \cite[Theorem 1.1]{RobertoThesis} to compute $[A_{p,d}]_\prim$ and check that $\omega_{s_{p,1}}$ appears in its expansion. This computation is quite tedious, so we skip it and refer the reader to \cite{aoki1987} for a complete proof with a different method.
\end{proof}

\begin{rem}
By Lefschetz $(1,1)$ theorem we always have that $\mathfrak{B}^2_d=\mathfrak{C}^2_d$, but in general we do not know explicit algebraic representatives of a given Hodge cycle in a Fermat surface. Moreover, by a result of Kang \cite[Corollary 3.2]{kang2016refined} the Hodge conjecture holds for Fermat fourfolds, i.e. $\mathfrak{B}^4_d=\mathfrak{C}^4_d$. In fact, she shows that the generalized Hodge conjecture holds for Fermat fourfolds.
\end{rem}

\subsection{Algebraic tuples of odd length}

The definition of Hodge characters given in \eqref{eqHodgechar} also makes sense for $n$ odd, when the degree $d=\text{lcm}(m)$ is an even number. In spite that these tuples do not match with an even dimensional weighted Fermat, it is still possible to join them to construct Hodge or even algebraic characters. For this reason, we also want to consider such tuples.

\begin{defi}
Let $n\in\Z_{\ge -1}$, $m\in\N^{n+2}$ and $d:=\text{lcm}(m)$. We define
$$
\mathfrak{A}^n_m:=\prod_{i=0}^{n+1}\{1,2,\ldots, m_i-1\},
$$ 
and the set of Hodge tuples
$$
\mathfrak{B}^n_m:=\left\{\alpha\in\mathfrak{A}^n_m: |t\cdot \alpha|=d\left(\frac{n}{2}+1\right)\ ,\ \ \forall t\in(\Z/d\Z)^\times\right\}.
$$
\end{defi}

\begin{rem}
We extend in the obvious way the lift, permutation, join and deletion operations to tuples. And the same arguments as before show that these operations preserve Hodge tuples.
\end{rem}

\begin{defi}
Let $d\in\N$ be an even number. Then we define
$$
\varepsilon_d:=\left(\frac{d}{2}\right)\in\mathfrak{B}^{-1}_d.
$$
Note that $\mathfrak{B}^{-1}_d=\{\varepsilon_d\}$.
\end{defi}

\begin{rem}
For $n\in\N$ even, the set $\mathfrak{B}^n_m$ is the usual set of Hodge characters. For $n$ odd, we have $\mathfrak{B}^n_m=\varnothing$ if $d$ is also odd. On the other hand, if $n$ is odd and $d$ is even, then for all $\alpha\in\mathfrak{A}^n_m$
\begin{equation}
\label{eqjoinepsilonHodge}
\alpha\in\mathfrak{B}^{n}_m\Longleftrightarrow\alpha *\varepsilon_d\in\mathfrak{B}^{n+1}_{(m,d)}.
\end{equation}
This observation leads us to define the following extension of the notion of algebraic characters to tuples of odd size.
\end{rem}

\begin{defi}
Let $n\in\Z_{\ge -1}$ odd, $m\in\N^{n+2}$ and $d:=\text{lcm}(m)$. We define the set of algebraic tuples
$$
\mathfrak{C}^n_m:=\left\{\alpha\in\mathfrak{A}^n_m: \alpha*\varepsilon_d\in\mathfrak{C}^{n+1}_{(m,d)}\right\}.
$$
\end{defi}

\begin{rem}
Since $\mathfrak{C}^{n+1}_{(m,d)}\subseteq\mathfrak{B}^{n+1}_{(m,d)}$, it follows by \eqref{eqjoinepsilonHodge} that
$$
\mathfrak{C}^n_m\subseteq\mathfrak{B}^n_m.
$$
Noting that $\varepsilon_d*\varepsilon_d=(\frac{d}{2},\frac{d}{2})\in\mathfrak{D}^0_d=\mathfrak{C}^0_d$ it follows that  $$\varepsilon_d\in\mathfrak{C}^{-1}_d.$$
\end{rem}

\begin{prop}
The lift, permutation, join and deletion preserve algebraic tuples.
\end{prop}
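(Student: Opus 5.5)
The strategy is to reduce every statement about odd-length tuples to the corresponding statement for even-dimensional characters, which is already established in \cref{propliftchar}, \cref{proppermalgchar} and \cref{propjoinalgchar}. The reduction uses two devices: the padding $\alpha\mapsto\alpha*\varepsilon_d$, and the fact that $\varepsilon_d\in\mathfrak{C}^{-1}_d$. Concretely, $\varepsilon_d*\varepsilon_d\in\mathfrak{D}^0_d\subseteq\mathfrak{C}^0_d$. I would also record that the lift of $\varepsilon_{d'}$ to any even degree $d$ with $d'\mid d$ is $\varepsilon_d$, since $\frac{d}{d'}\cdot\frac{d'}{2}=\frac{d}{2}$.

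\emph{Lift and permutation.}
\begin{itemize}
\item For $\alpha\in\mathfrak{A}^n_{m'}$ with $n$ odd and $m'\mid m$, set $d'=\operatorname{lcm}(m')$ and $d=\operatorname{lcm}(m)$. By the lift remark, $(\alpha*\varepsilon_{d'})^{(m,d)}=\alpha^{(m)}*\varepsilon_d$, taken with respect to $(m',d')\mid(m,d)$. Hence \cref{propliftchar} gives $\alpha\in\mathfrak{C}^n_{m'}\iff\alpha^{(m)}\in\mathfrak{C}^n_m$.
\item For a permutation $\sigma\in\operatorname{Sym}(n+2)$, extend it to $\operatorname{Sym}(n+3)$ by fixing the last slot. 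Then $\sigma(\alpha)*\varepsilon_d=\sigma(\alpha*\varepsilon_d)$, and \cref{proppermalgchar} applies.
\end{itemize}

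\emph{Join and deletion.} Take tuples $\alpha\in\mathfrak{A}^n_m$ and $\alpha'\in\mathfrak{A}^{n'}_{m'}$. As in the proof of \cref{propjoinalgchar}, I would first lift everything to a common degree $d''=\operatorname{lcm}(d,d')$; this is allowed by the lift case just proved, together with $(\alpha*\alpha')^{(d'')}=\alpha^{(d'')}*\alpha'^{(d'')}$. There are three parity cases.
\begin{itemize}
\item \emph{Both lengths even.} This is exactly \cref{propjoinalgchar}.
\item \emph{$\alpha$ odd, $\alpha'$ even.} Here $\alpha*\alpha'$ has odd length. Up to a permutation, $(\alpha*\alpha')*\varepsilon=(\alpha*\varepsilon)*\alpha'$. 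Since $\alpha*\varepsilon$ is algebraic, \cref{propjoinalgchar}(ii) gives
\[
\alpha'\in\mathfrak{C}\iff(\alpha*\varepsilon)*\alpha'\in\mathfrak{C}\iff\alpha*\alpha'\in\mathfrak{C}.
\]
Symmetrically, if $\alpha'$ is the algebraic even-length factor being deleted, the same identity shows that $\alpha$ and $\alpha*\alpha'$ are simultaneously algebraic.
\item \emph{Both lengths odd.} Now $\alpha*\alpha'$ has even length. For the join, combine $\alpha*\varepsilon$ and $\alpha'*\varepsilon$ using \cref{propjoinalgchar} to get $\alpha*\alpha'*\varepsilon*\varepsilon\in\mathfrak{C}$. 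Then delete the algebraic character $\varepsilon*\varepsilon$ (after a permutation), again by \cref{propjoinalgchar}(ii), to conclude $\alpha*\alpha'\in\mathfrak{C}$. Conversely, if $\alpha$ and $\alpha*\alpha'$ are algebraic, join $\alpha*\alpha'$ with $\varepsilon*\varepsilon$ and permute to obtain $(\alpha*\varepsilon)*(\alpha'*\varepsilon)$. Deleting the algebraic $\alpha*\varepsilon$ then gives $\alpha'*\varepsilon\in\mathfrak{C}$, that is, $\alpha'\in\mathfrak{C}$.
\end{itemize}

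The main obstacle is bookkeeping rather than substance. Two points need care:
\begin{itemize}
\item The degrees must match. Each $\varepsilon$ has to be taken at the common even degree, and the padding of $\alpha*\alpha'$ uses $\varepsilon_{\operatorname{lcm}(m,m')}$ rather than $\varepsilon_d$; the lift remark reconciles these. One must also check that the common degree is even whenever an odd-length tuple occurs, which holds because $\mathfrak{C}^n_m\subseteq\mathfrak{B}^n_m$ is empty for odd $d$.
\item \cref{propjoinalgchar} is stated only for characters in $\widehat{G}$ of even dimension. Every application above must therefore involve even-length objects only, and in the arrangement above this is the case.
\end{itemize}
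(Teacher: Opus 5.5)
Your proposal is correct and follows the paper's own proof. Lift and permutation come from \cref{propliftchar} and \cref{proppermalgchar}, using $(\varepsilon_{d'})^{(d)}=\varepsilon_d$ and extending the permutation by fixing the padded slot. Join and deletion go parity case by parity case: you pad odd-length tuples with $\varepsilon$, use $\varepsilon*\varepsilon\in\mathfrak{C}^0$, and apply \cref{propjoinalgchar} only to even-length objects. The one cosmetic difference is that you lift everything to the common degree $d''$ at the outset, whereas the paper keeps $\varepsilon_d$ and $\varepsilon_{d'}$ and converts them to $\varepsilon_{d''}$ by lifting only that coordinate inside its chains of equivalences.
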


\begin{proof}
The assertion about the lift follows from \cref{propliftchar} and the fact that $(\varepsilon_{d'})^{(d)}=\varepsilon_{d}$ for $d'\mid d$. The assertion about permutations follows from \cref{proppermalgchar} and the fact that $\sigma(\alpha)*\varepsilon_d=(\sigma,id_{\{n+2\}})(\alpha*\varepsilon_d)$. For the join and deletion let $\alpha\in\mathfrak{C}^n_m$ and $\alpha'\in\mathfrak{A}^{n'}_{m'}$. We want to show that
$$
\alpha'\in\mathfrak{C}^{n'}_{m'}\Longleftrightarrow\alpha*\alpha'\in\mathfrak{C}^{n+n'+2}_{(m,m')}.
$$
When $n$ and $n'$ are even, this is \cref{propjoinalgchar}. Using \cref{propjoinalgchar}, the fact that $\varepsilon_d*\varepsilon_d\in\mathfrak{C}^0_d$, and the fact that algebraic tuples are preserved under lift and permutations, we will show the equivalence in the remaining cases. If $n$ is even and $n'$ is odd, then for $d'=\text{lcm}(m')$ and $d''=\text{lcm}(m,m')$ 
$$
\alpha'\in\mathfrak{C}^{n'}_{m'}\Leftrightarrow \alpha'*\varepsilon_{d'}\in\mathfrak{C}^{n'+1}_{(m',d')}\Leftrightarrow \alpha*\alpha'*\varepsilon_{d'}\in\mathfrak{C}^{n+n'+3}_{(m,m',d')}
$$ 
$$
\Leftrightarrow \alpha*\alpha'*\varepsilon_{d''}\in\mathfrak{C}^{n+n'+3}_{(m,m',d'')}\Leftrightarrow\alpha*\alpha'\in\mathfrak{C}^{n+n'+2}_{(m,m')}.
$$
If $n$ is odd and $n'$ is even, then
$$
\alpha'\in\mathfrak{C}^{n'}_{m'}\Leftrightarrow \alpha*\varepsilon_d*\alpha'\in\mathfrak{C}^{n+n'+3}_{(m,d,m')}\Leftrightarrow\alpha*\alpha'*\varepsilon_{d''}\in\mathfrak{C}^{n+n'+3}_{(m,m',d'')}\Leftrightarrow \alpha*\alpha'\in\mathfrak{C}^{n+n'+2}_{(m,m')}.
$$
Finally, if $n$ and $n'$ are odd, then
$$
\alpha'\in\mathfrak{C}^{n'}_{m'}\Leftrightarrow \alpha'*\varepsilon_{d'}\in\mathfrak{C}^{n'+1}_{(m',d')}\Leftrightarrow \alpha*\varepsilon_d*\alpha'*\varepsilon_{d'}\in\mathfrak{C}^{n+n'+4}_{(m,d,m',d')}
$$
$$
\Leftrightarrow \varepsilon_{d''}*\varepsilon_{d''}*\alpha*\alpha'\in\mathfrak{C}^{n+n'+4}_{(d'',d'',m,m')}\Leftrightarrow \alpha*\alpha'\in\mathfrak{C}^{n+n'+2}_{(m,m')}.
$$
\end{proof}

%% file: sections/s04.tex
The operations on characters described in the previous section lead us to work with the space of all tuples of any length and degree at the same time, considering the join as a sum and the deletion as a subtraction. Furthermore the invariance under permutation and lift (of the sets of algebraic and Hodge tuples) suggests that we should identify the reorderings and the lifts of a given tuple. In this section, we introduce the formal module of tuples and recall its main properties discovered by Aoki \cite{Aoki1983}. Using such properties we reduce the Hodge conjecture to a question about lengths of exceptional tuples. We propose a new length reduction algorithm whose main novelty is that it incorporates the lift operation in the reduction process. Using it we verify the Hodge conjecture for all weighted Fermat varieties of degree $<44$ excepting degree $35$. In a subsequent article, we will  use the formal module of tuples to compute Gamma products, which are needed in order to look for explicit equations of new algebraic cycles.

\subsection{The formal module $R$}\label{sec:4.1}

\begin{defi}
Given $d > 1$, let $R_d$ be the free $\mathbb{Z}$-module generated by $(\Z/d\Z)\setminus\{0\}$, i.e. by the formal symbols $(a)=(a)_d$ for each $a \in (\mathbb{Z}/d\mathbb{Z}) \setminus \{0\}$. We have a natural action of the group $G_d := (\mathbb{Z}/d\mathbb{Z})^\times$ on $R_d$, given by
$$    
g\cdot(a) :=(ga).
$$
In this way $R_d$ becomes a $G_d$-module.
\end{defi}

\begin{rem}\label{remchartotuples}
We have a natural map (of sets)
$$\alpha=(a_0, \ldots, a_{n+1})\in\mathfrak{A}^n_d \mapsto\sum_{i=0}^{n+1} (a_i)\in R_d
$$
whose image determines the tuple $\alpha$ up to permutations. Thus we have an injection
\[\mathfrak{A}_d^n/\text{Sym}(n+2) \;  \hookrightarrow R_d.\]
From now on we will use the above injection to identify tuples (up to permutation) with elements in $R_d$ 
$$
\alpha= \sum_{i=0}^{n+1}(a_i).
$$
Conversely, given $\alpha = \sum c_a (a) \in R_d$ with $c_a \geq 0$, we often denote $\alpha$ by a tuple $(..., a, ...)$ in which each $a$ appears $c_a$ times. For example, $(1, 1, 2) = 2(1) + (2)$.
For two tuples $\alpha,\alpha'\in\mathfrak{A}^n_d$ such that $\alpha=\alpha'$ as elements of $R_d$, we will denote $\alpha\sim \alpha'$. In other words, $\alpha\sim\alpha'$ if $\alpha'$ is a permutation of $\alpha$. Under this identification the join operation becomes simply the sum
$$
\alpha*\alpha'=\sum_{i=0}^{n+1}(a_i)+\sum_{j=0}^{n'+1}(a_j')=\alpha+\alpha'
$$
and the deletion operation becomes a subtraction
$$
\alpha'=(\alpha*\alpha')-\alpha.
$$
On the other hand, the lift map $(\cdot)^{(d)}:\mathfrak{A}^n_{d'}\to\mathfrak{A}^n_{d}$ for $d'\mid d$ and $k=\frac{d}{d'}$, induces a group monomorphism
$$
(\cdot)^{(d)}:R_{d'}\hookrightarrow R_{d}
$$
$$
\sum_{i=1}^\ell n_i(a_i)_{d'}\mapsto \sum_{i=1}^\ell n_i(ka_i)_{d}.
$$
These morphisms form a directed system of groups.
\end{rem}

\begin{defi}
The formal module of tuples $R$ is the union of all $R_d$ modulo lifting  $$R:=\varinjlim R_{d}.$$
\end{defi}

\begin{rem}
We will identify $R_d\hookrightarrow R$ with its image in $R$, and so we will have $R_{d'}\subseteq R_d$ for $d'\mid d$, $k=\frac{d}{d'}$ and we will identify a tuple with its lift
$$
(ka)_{d}=(a)_{d'}.
$$
Under these identifications we see that formal module of tuples $R$ is isomorphic to the free $\Z$-module generated by $(\Q/\Z)\setminus\{0\}$ under the identification $(a)_d= ({a}/{d})_1$.
\end{rem}

\begin{defi}
We define the module of Hodge tuples $B\subseteq R$ as the submodule generated by all $\mathfrak{B}^n_d/\sim\ \hookrightarrow R$ for all $n\ge -1$ and $d\in\N$. Similarly, we define the module of algebraic tuples $C\subseteq R$ as the submodule generated by all $\mathfrak{C}^n_d/\sim\ \hookrightarrow R$ for all $n\ge -1$ and $d\in\N$. We denote by $B_d:=B\cap R_d$ and $C_d:=C\cap R_d$.
\end{defi}

\begin{rem}
The contention $\mathfrak{C}^n_d\subseteq \mathfrak{B}^n_d$ implies that $C\subseteq B$ and so $C_d\subseteq B_d$.
\end{rem}

\begin{rem}
Since the sets of Hodge and algebraic characters are invariant under lifts (\cref{propliftchar}), it follows that $B_d\subseteq R_d$ is the submodule generated by $\mathfrak{B}^n_d/\sim\ \hookrightarrow R_d$ for all $n\ge -1$, and $C_d\subseteq R_d$ is the submodule generated by $\mathfrak{C}^n_d/\sim\ \hookrightarrow R_d$ for all $n\ge -1$.
\end{rem}

\begin{theo}\label{thm:reduce_hodge_conjecture_to_Bm} If $C_d = B_d$, then the Hodge conjecture is true for all weighted Fermat varieties of degree $d$.
\end{theo}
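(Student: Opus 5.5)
The plan is to show that $C_d=B_d$ forces every Hodge tuple of degree $d$ to be an algebraic tuple. We then get the Hodge conjecture for $X^n_d$ from \cref{propHCchar}, and pass to weighted Fermat varieties of degree $d$ by the remark following the Corollary to \cref{propalgcycfinitemap}.

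First, fix $n$ even and a Hodge character $\alpha\in\mathfrak{B}^n_d$, viewed as an element of $B_d\subseteq R_d$ via \cref{remchartotuples}. By hypothesis $\alpha\in C_d$. By the remark after the definition of $B$ and $C$, $C_d$ is generated by the images of the sets $\mathfrak{C}^{n'}_d$ for $n'\ge -1$. So we can write
$$\alpha=\sum_{i}n_i\gamma_i,\qquad n_i\in\Z,\ \gamma_i\in\mathfrak{C}^{n_i'}_d,$$
where lifts of lower-degree algebraic tuples are already absorbed, thanks to \cref{propliftchar}. Move the negative coefficients to the left-hand side:
$$\alpha+\sum_{n_i<0}|n_i|\gamma_i=\sum_{n_i>0}n_i\gamma_i .$$
Both sides are now nonnegative combinations of the basis symbols $(a)_d$. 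Hence both sides are tuples, and by the injectivity in \cref{remchartotuples} they agree up to a permutation. In particular they have the same length.

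Next, let $\delta$ be the join of the $|n_i|$ copies of $\gamma_i$ with $n_i<0$, and let $\epsilon$ be the join of the $n_i$ copies of $\gamma_i$ with $n_i>0$. Since join preserves algebraic tuples (the last proposition of \cref{sec:3}), both $\delta$ and $\epsilon$ are algebraic. If no negative coefficients occur, take $\delta$ empty, and then $\alpha\sim\epsilon$ directly. Otherwise $\alpha*\delta\sim\epsilon$, and permutation invariance (\cref{proppermalgchar}, extended to tuples) shows that $\alpha*\delta$ is algebraic. Deleting the algebraic tuple $\delta$ from it then gives that $\alpha$ is algebraic.

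The main point needing care is that the tuples $\gamma_i$ may have odd length. That is exactly why the paper extended the notions to $n\ge -1$ and proved that lift, permutation, join and deletion preserve algebraic tuples in all parity cases. Since $\alpha$ has even dimension, the deletion step returns an honest algebraic character, $\alpha\in\mathfrak{C}^n_d$. Note also that the join of the $\gamma_i$ lives in $\mathfrak{A}$ of the homogeneous degree $d$ variety, because all the $\gamma_i$ are taken in $R_d$.

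This shows $\mathfrak{B}^n_d=\mathfrak{C}^n_d$ for every even $n$, so \cref{propHCchar} gives the Hodge conjecture on the primitive part of $H^n(X^n_d,\Q)$. The non-primitive classes come from $H^n(\P^{n+1},\Q)$, i.e. from powers of the hyperplane class, and are therefore algebraic. By Hard Lefschetz and the Gysin isomorphism there are no other Hodge classes outside the middle degree beyond these powers, and for $n$ odd there are none at all. Thus $X^n_d$ satisfies the Hodge conjecture in every dimension. Finally, for a weight vector $m$ with $\operatorname{lcm}(m)=d$ we have $m\mid(d,\dots,d)$, so the Corollary to \cref{propalgcycfinitemap} transfers the conjecture from $X^n_d$ to $X^n_m$.
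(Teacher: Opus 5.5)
Your proof is correct and is essentially the paper's argument: you move the negative coefficients across, read the resulting identity in $R$ as a permutation equivalence of joins of tuples, and conclude using the join, permutation and deletion invariance of algebraic tuples. The differences are minor. You prove $\mathfrak{B}^n_d=\mathfrak{C}^n_d$ for $X^n_d$ and then descend to $X^n_m$ with the finite-map corollary, whereas the paper treats $\alpha\in\mathfrak{B}^n_m$ directly. Also, instead of citing the remark that $C_d$ is spanned by degree-$d$ algebraic tuples, the paper lifts everything to $d'=\operatorname{lcm}(d,d_1,\ldots,d_\ell)$ and applies \cref{propliftchar}. That is the safer move, because an element of $C_d$ may a priori involve algebraic tuples whose degree is a proper multiple of $d$, not only lower degrees; your argument goes through unchanged after such a lift.
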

\begin{proof}
If for some $n\in\N$ even and some $m\in\N^{n+2}$ such that $\text{lcm}(m)=d$ we have $\alpha\in\mathfrak{B}^n_m\setminus\mathfrak{C}^n_m$, then $\alpha\in B_d$ and we claim $\alpha\notin C_d$. In fact, if we suppose that $\alpha\in C$, then $\alpha=\sum_{i=1}^\ell k_i\alpha_i$ for some $\alpha_i\in\mathfrak{C}^{n_i}_{d_i}$ and $k_i\in\Z\setminus\{0\}$. Let $d':=\text{lcm}(d,d_1,\ldots,d_\ell)$. Without loss of generality we can assume that $k_i>0$ for all $i=1,\ldots,r$ and $k_j=-h_j<0$ for all $j=r+1,\ldots,\ell$. Then, we can write in $R_{d'}$ the equality
$$
\alpha^{(d')}+\sum_{j=r+1}^\ell h_j\cdot \alpha_j^{(d')}=\sum_{i=1}^rk_i\cdot\alpha_i^{(d')} .
$$
This equality implies that as characters
$$
\alpha^{(d')}*(\alpha_1^{(d')})^{*h_1}*\cdots*(\alpha_r^{(d')})^{*h_r}\sim(\alpha_{r+1}^{(d')})^{*k_{r+1}}*\cdots*(\alpha_\ell^{(d')})^{*h_\ell},
$$
where we use the notation $\gamma^{*k}=\gamma*\gamma*\cdots*\gamma$ for the join of $k$ copies of $\gamma$. And it follows from \cref{propliftchar}, \cref{proppermalgchar} and \cref{propjoinalgchar} that $\alpha\in \mathfrak{C}^n_m$ which is a contradiction.
\end{proof}

\begin{rem}
For $d'\mid d$ we have $C_{d'}=C_{d}\cap R_{d'}$ and $B_{d'}=B_{d}\cap R_{d'}$. Thus, if $C_d=B_d$, then the Hodge conjecture is true for all weighted Fermat varieties of degree a divisor of $d$.
\end{rem}

The previous result motivates us to investigate the $G_d$-module structure of $B_d$. A generating set for $B_d$ was described by Aoki~\cite{Aoki1983}, and we will explain it now.

\begin{defi}
We define the module of linear tuples $D\subseteq R$ as the submodule generated by all $\mathfrak{D}^n_d/\sim \ \hookrightarrow R$ for all $n\ge -1$ and $d\in\N$. We denote by $D_d:=D\cap R_d$.
\end{defi}

\begin{rem}
As a $G_d$-module, the module $D_d\subseteq R_d$ is generated by the tuple $(1,-1)$ if $d$ is odd, while for $d$ even it is generated by $(1,-1)$ and $\varepsilon_d=(\frac{d}{2})$.
\end{rem}

\begin{prop}
Let $d\in \N_{>1}$. Then $B_d=D_d$, if and only if, $d$ is prime or $d=4$.
\end{prop}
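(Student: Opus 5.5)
Both directions will be derived from \cref{theoAokilinearcycles}, together with the facts that $B_d$ and $D_d$ are generated by lifts of Hodge (resp. linear) tuples of divisor degrees and that the odd length tuples only add the generator $\varepsilon_d$.

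\emph{Sufficiency.} Let $d$ be prime or $d=4$. First I record what $B_d$ is made of. It is generated by $\mathfrak{B}^n_d/\sim$ for all $n\ge -1$, and a tuple of $\mathfrak{A}^n_d$ is the lift of a tuple of degree $d'\mid d$ whose entries generate $\Z/d'\Z$. For $d=p$ prime the only divisors are $1$ and $p$. Degree $1$ has no nonzero entries, so every generator is a genuine degree $p$ Hodge tuple. If $p$ is odd, $\mathfrak{B}^n_p$ is empty for $n$ odd. For even $n$, \cref{theoAokilinearcycles} gives $\mathfrak{B}^n_p=\mathfrak{D}^n_p\subseteq D_p$. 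If $p=2$, the only tuples are of the form $(1,\dots,1)$, which are sums of copies of $\varepsilon_2\in D_2$. For $d=4$, the even length generators lie in $D_4$ by \cref{theoAokilinearcycles} again. For odd length I use \eqref{eqjoinepsilonHodge}: if $\alpha\in\mathfrak{B}^n_4$ with $n$ odd, then $\alpha+\varepsilon_4\in\mathfrak{B}^{n+1}_4=\mathfrak{D}^{n+1}_4\subseteq D_4$. Hence $\alpha\in D_4-\varepsilon_4\subseteq D_4$, since $\varepsilon_4\in D_4$ (the remark after the definition of $D$). Degree $2$ pieces are lifts of the same kind of tuples, so they are already covered. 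Therefore $B_d\subseteq D_d$, and the reverse inclusion always holds because linear tuples are Hodge.

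\emph{Necessity.} Let $d$ be neither prime nor $4$. I will exhibit an element of $B_d\setminus D_d$. The plan is to use the even dimension $n$ with $n+2\ge$ the smallest prime factor, or with $\gcd(d,(n+2)!)\neq1$. Then \cref{theoAokilinearcycles} supplies some $\alpha\in\mathfrak{B}^n_d\setminus\mathfrak{D}^n_d$. The obstacle is that $\alpha\notin\mathfrak{D}^n_d$ as a tuple does not immediately give $\alpha\notin D_d$ as an element of the module, because $D_d$ contains $\Z$-combinations.

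To handle this I will construct a $\Z$-linear invariant $\phi:R_d\to\Z$ (or into $\Z/2$) vanishing on the generators $(a)+(-a)$ and $\varepsilon_d$ of $D_d$, but not on $\alpha$. The natural choice is the parity of the number of entries $a$ with $\langle a\rangle<d/2$, twisted by the Galois action, or a character-sum functional $\sum_a c_a\chi(a)$ with $\chi$ odd, which kills $(a)+(-a)$. For an odd Dirichlet character $\chi$ mod $d$ with $\chi(-1)=-1$, the functional $(a)\mapsto\chi(a)$ (extended by zero to nonunits) kills every $(a)+(-a)$. The term $\varepsilon_d$ involves a nonunit, so it is killed as well. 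Thus it suffices to find a Hodge tuple of units with $\sum\chi(a_i)\neq0$ for some odd $\chi$, or, more robustly, to use primitive characters on the level $d'\mid d$ of non-unit entries.

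Concretely, I would take the standard tuples. For $d=pk$ with $k\ge3$, the tuple $s_{p,1}$ is Hodge (\cref{propstandchar}). For $d=2p$ or $d=p^2$ a direct small example may be needed, for instance for $d=6$ the tuple $(1,3,4)$ paired appropriately, or $(1,1,4)$. The step I expect to be hardest is verifying that a suitable odd character sum is nonzero on these standard tuples. I would first try the Stickelberger-type functional $\alpha\mapsto\sum_{t\in G_d}\chi(t)\,|t\alpha|$. This vanishes on $D_d$ because linear pairs contribute a constant $d$, and it vanishes on $B_d$ as well, so it cannot work directly. I will therefore fall back on odd characters of the finer $G_d$-module decomposition of $R_d\otimes\C$, namely isotypic components indexed by pairs (divisor, character), and compare the rank of the component spanned by $B_d$ with that spanned by $D_d$ over the component where Aoki's theorem fails.
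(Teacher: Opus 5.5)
Your sufficiency argument is fine. It actually spells out the direction the paper's one-line proof leaves implicit, including the odd-length tuples via $\varepsilon_4$. The gap is in necessity. You correctly reduce to showing that the tuple $\alpha\in\mathfrak{B}^n_d\setminus\mathfrak{D}^n_d$ supplied by \cref{theoAokilinearcycles} does not lie in the module $D_d$, but you never establish this. Three things go wrong along the way.
\begin{itemize}
\item The odd-character route stops exactly where it matters. You flag the nonvanishing of $\sum\chi(a_i)$ as unresolved, and then fall back on an unspecified rank comparison of isotypic components.
\item The tuples you propose to test are not tuples of units; for instance, $s_{p,1}$ contains the entry $d-p$.
\item Your degree-$6$ examples are not Hodge: $|(1,1,4)_6|=6$ and $|(1,3,4)_6|=8$, whereas a length-$3$ Hodge tuple needs $9$. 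Correct examples would be $(1,4,4)_6$ or $s_{2,1}=(1,4,4,3)_6$.
\end{itemize}

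The missing idea is that $D_d$ is completely explicit, so no character theory is needed. In $R\cong\bigoplus_{x\in(\Q/\Z)\setminus\{0\}}\Z\,(x)$, the module $D$ is spanned by $(x)+(-x)$ and $\varepsilon=(1/2)$. These generators are supported on the disjoint negation orbits $\{x,-x\}$, hence
$$D_d=\Big\{\sum n_a(a)\in R_d:\ n_a=n_{-a}\ \text{for all } a\neq d/2\Big\}.$$
Now suppose an effective tuple $\alpha\in\mathfrak{A}^n_d$ with $n$ even lies in $D_d$. Its entries $a\neq d/2$ pair off as $(a,-a)$, with $\langle a\rangle+\langle -a\rangle=d$. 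The number of entries equal to $d/2$ has the parity of $n+2$, so it is even and these pair off too. Thus $\alpha\in\mathfrak{D}^n_d$.

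Equivalently, the functionals $\phi_a\big(\sum n_b(b)\big)=n_a-n_{-a}$ vanish on $D_d$, and some $\phi_a$ is nonzero on any $\alpha\in\mathfrak{B}^n_d\setminus\mathfrak{D}^n_d$. These $\phi_a$ are the simplest odd functions; there is no need to pass to Dirichlet characters. This is exactly the step hidden in the paper's sentence ``if $B_d=D_d$, then $\mathfrak{B}^n_d=\mathfrak{D}^n_d$ for all even $n$''. Combined with \cref{theoAokilinearcycles} for $n$ large enough that $\gcd(d,(n+2)!)\neq 1$, it finishes necessity at once.
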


\begin{proof}
If $B_d=D_d$, then $\mathfrak{B}^n_d=\mathfrak{D}^n_d$ for all $n\in\N$ even and the result follows by \cref{theoAokilinearcycles}.
\end{proof}

\begin{defi}
For every $d\in\N$ we define the module of standard tuples $S_d\subseteq R_d$ as the $G_d$-submodule generated by the images of $\mathfrak{S}^n_d\rightarrow R$ for all $n\ge -1$. We denote by $\widetilde{S}\subseteq R$ the submodule generated by all $S_d$ for $d\in \N$, and by $\widetilde{S}_d:=\widetilde{S}\cap R_d$.
\end{defi}

\begin{rem}
By \cref{proplinchar} and \cref{propstandchar} we have $D+\widetilde{S}\subseteq C$ and so 
\begin{equation}\label{eqlinstdalg}
D_d+S_d\subseteq D_d+\widetilde{S}_d\subseteq C_d.
\end{equation}
Furthermore, it is clear from the definition of $\mathfrak{D}^n_d$ that $D_d\subseteq R_d$ is the submodule generated by $\mathfrak{D}^n_d/\sim\ \hookrightarrow R_d$ for all $n\ge -1$. On the other hand, the analogue statement is not true anymore for standard tuples, i.e. it might happen that $S_d\subsetneq \widetilde{S}_d$. 
The main difference between the two is the lift operation. If $\alpha\in\mathfrak{A}^n_d$ is such that $\alpha\in S_d$, then it corresponds to a character resulting from joining, permuting and deleting standard characters of degree $d$, while if $\alpha\in\widetilde{S}_d$, then it is a character resulting from joining, permuting and deleting lifts of standard characters (not necessarily of the same degree). The advantage of working with $\widetilde{S}_d$ is that it produces more algebraic characters than $S_d$, but it has the disadvantage of not knowing an explicit finite set of generators of it.
\end{rem}

\subsection{The $G_d$-module of exceptional tuples $T_d$}\label{sec:4.2}

In view of \cref{thm:reduce_hodge_conjecture_to_Bm} and \eqref{eqlinstdalg}, we may therefore focus on the simpler problem of finding generators of the quotient $B / (D + \widetilde{S})$. Aoki \cite[Theorem C]{Aoki1983} was in fact able to find generators for $B_d / (D_d + S_d)$, the so called exceptional tuples. In this section we recall the definition of such generators. We must first set some notation.

\begin{defi}
Given $d\in\N$, $\alpha = \sum_{i=1}^\ell n_i (a_i)_d \in R_d$ and a proper divisor $d' \mid d$, we set the $d'$-part of $\alpha$ as
\begin{equation}
   \alpha_{d'} := \sum_{ \text{gcd}(a_i, d) = d'} n_i (a_i/d')_{d/d'} \in R_{d/d'}.
\end{equation}
\end{defi}

\begin{rem}
Any $\alpha\in R_d$ can be decomposed uniquely as the sum of $d'$-parts, 
$$\alpha = \sum_{d'\mid d} \alpha_{d'}.$$ We call the $1$-part of $\alpha$ its primitive part. For example, for a tuple $(3, 3, 4, 6, 7, 10)\in \mathfrak{A}^4_{12}$
\[(3, 3, 4, 6, 7, 10)_{12}=(7)_{12}+(10)_{12}+(3,3)_{12}+(4)_{12}+(6)_{12} \]
\[= (7)_{12} + (5)_6 + (1, 1)_4 + (1)_3 + (1)_2 \in R_{12},\]
and the primitive part is $(7)_{12} \in R_{12}$.
\end{rem}


\begin{defi}
Let $d,e\in\N$ coprime. Given $a\in(\Z/d\Z)\setminus\{0\}$ and $b\in(\Z/e\Z)\setminus\{0\}$, we define
$$
(a)_d\cdot (b)_e:=(c)_{de}
$$
where $c$ is uniquely determined by the equations $c\equiv a$ (mod $d$) and $c\equiv b$ (mod $e$).
\end{defi}


Let us denote the following modified set of prime numbers by
$$
P=\{4\}\cup\{p\in\N_{\ge 3}:p\text{ prime}\} 
$$
and let us denote the set of all products of an even number of distinct elements in $P$ by
$$
Q =\{p_1p_2\cdots p_{2k}: k\in\N,\  p_i\in P, \ p_i\neq p_j\text{ for }i\neq j\}.
$$
For each $q \in Q$, we will construct a non-canonical element $\xi_q \in R_q/D_q$, all of which will play the role of generators of $B_d / (D_d + S_d)$.

\begin{defi}
For each $p \mid q$ with $p \in P$, fix a primitive root $\mu_p$ modulo $p$. To define $\xi_q$, it suffices to define its $d$-parts for $d\mid q$. We define its primitive part as
\begin{eqnarray*}
\xi_{q, 1} &:= & 
\prod_{\substack{p \in P \\ p \mid q}} (1, \mu_p, ..., \mu_p^{\varphi(p)/2 - 1})_p,
\end{eqnarray*}
where $\varphi$ is Euler's totient function. This is 
$\prod_{\substack{3\leq p \hbox{ prime  }   \\ p \mid q}} (1, \mu_p, ..., \mu_p^{\frac{p-1}{2} - 1})_p$  if $4\nmid q$ and and $(1)_4$ times the same expression if $p\mid q$.  

For $d \mid q$ with $1 < d < q$, we define the $d$-part $\xi_{q, d}$ inductively as the solution to the equation
\[\varphi(d) \xi_{q, d} = \sum_{\substack{e \mid d \\ e < d}} \tau_d(\xi_{q, e}^{(q)}) \in R_{q/d} / D_{q/d},\]
where the map $\tau_d = \tau_{q, d} : R_q \rightarrow R_{q/d}$ is defined as
\begin{equation}
    \tau_{d}((a)_q) := \frac{\varphi(q)}{\varphi(q')} g_{q,d}\cdot(a')_{q/d}, \ \   \text{ where }\ \ g_{q,d}:= \prod_{\substack{p \text{ prime} \\ p \mid d' \\ p \nmid q/d}} (1, -p^{-1})_{q/d}\in\Z[G_{q/d}], 
\end{equation}
and 
$$q' = q / \gcd(q,a),\ \  d' = d/\gcd(q,a),\ \  a' = a/\gcd(q,a).
$$
Since we are modding out by $D_{q/d}$, such a solution always exists. Then, $\xi_q = \sum_{d\mid q} \xi_{q, d}$. We call $\{\xi_q \;:\; q \in Q\}$ the set of exceptional elements.
\end{defi}

\begin{prop}[\cite{Aoki1983}] $\xi_q \in B_q / D_q$.
\end{prop}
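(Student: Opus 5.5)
The plan is to turn membership in $B_q$ into a finite family of linear conditions, and then check these conditions on $\xi_q$ by Fourier analysis on $G_q=(\Z/q\Z)^\times$.

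\textbf{Step 1: $B_q$ as a kernel.} For $t\in G_q$ define the additive functional
$$h_t\Big(\sum n_a(a)_q\Big):=\sum n_a\Big(\frac{\langle ta\rangle}{q}-\frac12\Big)\in\Q .$$
By the definition of $\mathfrak{B}^n_q$, a tuple of length $n+2$ is Hodge if and only if $h_t$ vanishes on it for every $t$. I claim that
$$B_q=\bigcap_{t}\ker h_t .$$
The inclusion $\subseteq$ is immediate. For $\supseteq$, take $\alpha$ in the kernel and add enough copies of the linear tuples $(a,-a)$ (and of $\varepsilon_q$ when $q$ is even), on which every $h_t$ vanishes. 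This makes all coefficients nonnegative, so we obtain a genuine Hodge tuple minus elements of $D_q\subseteq B_q$.

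Since $D_q\subseteq\ker h_t$, the statement $\xi_q\in B_q/D_q$ is well posed. It therefore suffices to show that $h_t(\xi_q)=0$ for all $t$, for any choice of the $D$-ambiguous solutions $\xi_{q,d}$.

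\textbf{Step 2: Fourier transform in $t$.} The function $t\mapsto h_t(\alpha)$ is odd under $t\mapsto -t$, because $\langle -x\rangle=q-\langle x\rangle$ for $x\neq0$. So its even Fourier coefficients vanish automatically. It is enough to prove that, for every odd Dirichlet character $\chi$ mod $q$,
$$\widehat{h}_\chi(\xi_q):=\sum_{t\in G_q}\chi(t)\,h_t(\xi_q)=0 .$$
Split $\xi_q=\sum_{d\mid q}\xi_{q,d}^{(q)}$. For the $d$-part, written in $R_{q/d}$, the inner sum $\sum_t\chi(t)\langle t a'\rangle$ reduces to a multiple of $\overline{B}_{1,\chi}$ restricted to level $q/d$. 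This is nonzero only if the conductor $f_\chi$ divides $q/d$. When it is nonzero, it equals $\frac{\varphi(q)}{\varphi(q/d)}\,\chi(a')$ times the generalized Bernoulli number $B_{1,\bar\chi}$, corrected by the Euler factors $\prod_{p\mid d,\,p\nmid q/d}(1-\bar\chi(p)p^{-1})$ coming from the imprimitivity relation.

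These are exactly the factors encoded by $\frac{\varphi(q)}{\varphi(q')}g_{q,d}$ in $\tau_d$. Hence the recursion $\varphi(d)\xi_{q,d}=\sum_{e<d}\tau_d(\xi^{(q)}_{q,e})$ should translate, after applying $\widehat h_\chi$, into a linear relation among the numbers $\widehat h_\chi(\xi_{q,e})$. Ambiguity mod $D$ is harmless because $\widehat h_\chi$ kills $D$.

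\textbf{Step 3: The primitive part and the cancellation.} For the primitive part, $\chi$ factors as $\prod_{p\mid q}\chi_p$, and
$$\widehat h_\chi\big(\xi_{q,1}\big)\;\propto\;B_{1,\bar\chi}\prod_{p\mid q}\ \sum_{j=0}^{\varphi(p)/2-1}\chi_p(\mu_p^j).$$
Each factor vanishes unless $\chi_p$ is odd or trivial. When $\chi_p$ is odd the factor equals $2/(1-\chi_p(\mu_p))$.

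The key point is then to show that the full sum $\sum_d\widehat h_\chi(\xi_{q,d})$ telescopes to zero. For $\chi$ primitive with every $\chi_p$ odd, the parity hypothesis (an even number of factors in $q\in Q$) forces $\chi=\prod\chi_p$ to be even, so this case never occurs for odd $\chi$. For imprimitive $\chi$, with conductor $f$, the contributions of the $d$-parts with $d\mid q/f$ are designed to cancel against the primitive part, via the distribution relation for $B_{1,\chi}$.

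\textbf{Main obstacle.} I expect the main obstacle to be this last bookkeeping: verifying that the coefficients $\frac{\varphi(q)}{\varphi(q')}$ and the products $(1,-p^{-1})$ in $g_{q,d}$ produce exactly the inclusion–exclusion over divisors needed to cancel the imprimitive contributions. I would first do the case $q=p_1p_2$ by hand, then induct on the number of prime factors of $d$ using the recursive definition.
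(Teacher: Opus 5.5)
Your set-up is correct as far as it goes. Step~1 (identifying $B_q$ with $\bigcap_t\ker h_t$ by padding with $(a,-a)$ and $\varepsilon_q$), the reduction to odd $\chi$, the vanishing of $\hat h_\chi$ on the $d$-part unless $f_\chi\mid q/d$, and the parity argument disposing of primitive odd $\chi$ are all fine. (The paper gives no proof of this proposition; it just cites Aoki.) But the imprimitive case is the whole content of the statement, and there you only assert that the $d$-parts with $d\mid q/f_\chi$ ``are designed to cancel''. The ingredients you state would not make them cancel. In $g_{q,d}$, the symbol $(1,-p^{-1})_{q/d}$ is the group-ring element $[1]+[-p^{-1}]\in\Z[G_{q/d}]$, where $p^{-1}$ is the inverse of $p$ modulo $q/d$, not the rational number $1/p$. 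Since $\hat h_\chi((ub))=\bar\chi_0(u)\hat h_\chi((b))$ for $u\in G_{q/d}$ and $\chi_0$ (the primitive character attached to $\chi$) is odd, this element acts by $1-\chi_0(p)$. That is exactly the $s=0$ Euler factor in $B_{1,\chi^{(M)}}=B_{1,\chi_0}\prod_{p\mid M}(1-\chi_0(p))$, where $B_{1,\chi^{(M)}}:=\frac1M\sum_{0<s<M,\ \gcd(s,M)=1}\chi_0(s)\,s$. Your factors $1-\bar\chi(p)p^{-1}$ match nothing.

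What is missing is a compatibility lemma, with $\hat h_\chi$ evaluated on $R_{q/d}$ through the lift to $R_q$. If $f_\chi\mid q/d$ and $\gcd(a,q)=e\mid d$, then $\hat h_\chi(\tau_d((a)_q))=\frac{\varphi(q)}{\varphi(q/d)}\hat h_\chi((a)_q)$. This follows from $\hat h_\chi((a)_q)=\frac{\varphi(q)}{\varphi(q/e)}\bar\chi_0(a/e)B_{1,\chi^{(q/e)}}$, because $g_{q,d}$ supplies precisely the factors $\prod_{p\mid d/e,\ p\nmid q/d}(1-\chi_0(p))$ that turn $B_{1,\chi^{(q/d)}}$ into $B_{1,\chi^{(q/e)}}$. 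You then apply the recursion only at $d=N:=q/f_\chi$. There $\gcd(N,f_\chi)=1$, so $\varphi(q)/\varphi(f_\chi)=\varphi(N)$, and no induction on prime factors is needed.

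Be aware, however, that with the recursion read literally as displayed, this gives $\sum_{e\mid N}\hat h_\chi(\xi_{q,e})=2\,\hat h_\chi(\xi_{q,N})$, not $0$. For $q=12$ the displayed recursion produces $\xi_{12}\equiv(1)+(2)+(3)+3(4)$, whose $h_1$ equals $-3/2$, so that element is not in $B_{12}$. The proposition requires $\varphi(d)\xi_{q,d}=-\sum_{e\mid d,\ e<d}\tau_d(\xi^{(q)}_{q,e})$. For $q=12$ this gives $(1)-(2)-(3)+(4)\equiv(1,4,9,10)_{12}$, which is Hodge, and with that sign the lemma yields $\sum_{e\mid N}\hat h_\chi(\xi_{q,e})=0$, i.e.\ $\hat h_\chi(\xi_q)=0$. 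So the bookkeeping you defer is not routine: pinning down the correct Euler factors and the correct sign is the actual proof.
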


\begin{defi}
For every $d\in\N$, we define the module of exceptional tuples $T_d\subseteq R_d/D_d$ as the $G_d$-submodule generated by the elements $\xi_q$ for $q \in Q$ with $q \mid d$.
\end{defi} 

\begin{theo}[\cite{Aoki1983}]\label{thm:generators_of_Bm} The $G_d$-module $B_d/(D_d + S_d)$ is generated by $T_d$.
\end{theo}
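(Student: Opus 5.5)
The statement is Aoki's generation theorem. Its proof is a computation in the group ring $\Z[G_d]$ acting on $R_d$, and I would organize it around the decomposition of tuples into $e$-parts. I would first reduce to primitive parts by induction on $d$. Any $\alpha\in B_d$ can be written as $\alpha=\sum_{e\mid d}\alpha_e$ with $\alpha_e\in R_{d/e}$. The Hodge condition $|t\cdot\alpha|$ constant for all $t\in G_d$ is a linear condition. Via the standard criterion, it can be restated as follows: for each nontrivial odd Dirichlet character $\chi$ modulo each divisor, a certain linear functional $L_\chi(\alpha)=\sum_a n_a\,\chi(a)\,\overline{B_1}(a/d)$ must vanish. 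These functionals only couple an $e$-part with the parts $e'$ having $e\mid e'$, through the distribution relations of the Bernoulli functions. Concretely, I would show that $B_d$ is cut out, modulo $D_d$ (which accounts for the even or trivial part), by the vanishing of the "odd $\chi$-components" of the primitive part, corrected by the maps $\tau_{d,e}$ from the definition of $\xi_q$. This is exactly why the $d$-parts of $\xi_q$ are defined by the recursion with $\tau_d$ and the factors $g_{q,d}$: these are the Euler factors $(1-\chi(p)p^{-1})$ appearing in the distribution relation for $B_1$.

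The second step is to analyze the primitive part $R_d^{\rm prim}\simeq\Z[G_d]$ modulo the contributions of $D_d$ and $S_d$. Since $G_d$ splits as the product of the groups $(\Z/p^{k}\Z)^\times$ over prime powers (with $4$ playing the role of a prime as in $P$), the odd characters of $G_d$ decompose according to the parity of their local components. I would show the following. Hodge primitive elements correspond to elements of $\Z[G_d]$ annihilated by all odd characters $\chi$ with $L(0,\chi)\ne0$, which is all odd characters. Modulo $D_d$ (generated by $(1,-1)$, i.e.\ the ideal $(1+[-1])$), this leaves the contributions of characters whose local components are "odd" at an even number of places. The standard tuples $s_{p,i}$ kill the components whose odd part is concentrated in a bounded way, namely those whose conductor is not of the form $q\in Q$. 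Their primitive parts are $(1,1+k,\dots)$, i.e.\ sums over cosets of the kernel of $G_d\to G_{d/p}$. The remaining components are those that are odd at exactly an even set of $p\in P$ dividing $d$ with conductor exactly $q$. These are generated by $\prod_p(1,\mu_p,\dots,\mu_p^{\varphi(p)/2-1})_p$, a half-system of representatives of $G_p/\{\pm1\}$ at each such $p$, lifted to $d$ via $(\cdot)^{(d)}$. This is precisely $\xi_{q,1}$.

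Finally I would assemble the pieces by induction on the divisor lattice. Given $\alpha\in B_d$, I subtract a $\Z[G_d]$-combination of lifted $\xi_q$'s and elements of $D_d+S_d$ to kill its primitive part. The recursion defining $\xi_{q,e}$ guarantees that the result is still Hodge. I then apply the induction hypothesis to the nonprimitive remainder, which lives in $\sum_{e>1}R_{d/e}$ and is Hodge of smaller degree. The main obstacle is the integrality in the second step. Over $\Q$ the character-theoretic decomposition is immediate, but one must show that $D_d+S_d$ together with the $\xi_q$ generate the Hodge lattice over $\Z$, not merely a finite-index sublattice. The factor $\varphi(d)$ in the recursion, which is solvable only modulo $D$, signals this difficulty. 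I would first try to handle it by showing that $\Z[G_d]/(1+[-1])$ tensored over the local factors is free over the relevant quotients, using the explicit half-system idempotent-like elements at each $p\in P$. Failing that, I would fall back on Aoki's original argument in \cite{Aoki1983}.
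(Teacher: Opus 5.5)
The paper does not prove this statement; it quotes \cite[Theorem C]{Aoki1983}. Your outline does not supply a proof either, and the gap is larger than the integrality issue you flag.

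Your first step, expressing the Hodge condition through odd $\overline{B}_1$-functionals with Euler factors coupling the $e$-parts, is sound in spirit. Your second step, however, looks for the exceptional tuples where they cannot be. By \cref{thm:Tm_is_killed_by_two}, $2\xi_q\in D_q+S_q$, so $\xi_q$ vanishes in $(B_q/(D_q+S_q))\otimes\Q$. No decomposition into $\chi$-isotypic components can tell $\xi_q$ apart from elements of $D_q+S_q$. The most a character computation can show is that $B_d/(D_d+S_d)$ is torsion; there is no component for $\xi_{q,1}$ to generate.

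Your description of the ``remaining components'' is also inconsistent. A character of $G_d$ whose local components are odd at an even number of places is even. Since $D_d\supseteq(1+[-1])R_d$, and $1+[-1]$ acts on that isotypic part by $1+\chi(-1)=2$, that part already lies in $D_d\otimes\Q$.

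Nor is it true that every odd $\chi$ gives a nonvanishing value. For imprimitive $\chi$ the relevant quantity is $B_{1,\chi_0}\prod_\ell(1-\chi_0(\ell))$, which vanishes when $\chi_0(\ell)=1$ for some $\ell\mid d$, and that is exactly where standard tuples live. The paper's $g_{q,d}$ is the group-ring element $(1)+(-p^{-1})\equiv 1-[p^{-1}]$ modulo $D$, not a factor $1-\chi(p)p^{-1}$.

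So the whole content of the theorem is the determination of the finite group $B_d/(D_d+S_d)$, which you leave to Aoki. Your suggested repair cannot reach it: $\{\pm1\}$ acts freely on $G_d$, so $\Z[G_d]/(1+[-1])$ is already $\Z$-free and primitive parts alone produce no torsion. The $2$-torsion is created by the relations in $S_d$ that tie primitive to non-primitive parts.

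For the same reason your final induction is unsupported. After the primitive part is removed, the remainder lies in $\sum_{e>1}R_{d/e}$, not in a single $R_{d'}$. The Hodge condition couples its $e$-parts through these Euler factors, so it is not evidently a sum of Hodge elements of smaller degree.

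The missing idea is a structural description of $D_d+S_d$. Modulo $D_d$, standard tuples are exactly prime-degree distribution relations. With $x=i/d$ in $R\cong\Z[(\Q/\Z)\setminus\{0\}]$, $s_{p,i}=\sum_{j=0}^{p-1}(x+j/p)+(-px)$ differs from $\sum_j(x+j/p)-(px)$ by $(px)+(-px)\in D_d$, and also by $\varepsilon_d$ when $p=2$. Hence, up to bookkeeping of $(0)$ and degenerate small cases, $R_d/(D_d+S_d)$ is the odd quotient $U_d/(1+[-1])U_d$ of Kubert's universal ordinary distribution of level $d$.

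This requires $S_d$ to contain the lifts of standard tuples of all degrees dividing $d$, as the paper's inequality $\ell_{2,d}\le\ell_{2,d'}$ and its prime-power corollary presuppose. With the literal definition the statement fails. The lift $(3,12,21,18)_{27}$ of $s_{3,1}\in\mathfrak{S}^2_9$ is Hodge. The functional $\alpha\mapsto\sum_{\gcd(a,27)=9}n_a\bigl(\frac{a/9}{3}\bigr)$ kills $D_{27}$ and every $s_{3,i}$ of degree $27$, but takes the value $-1$ on this lift, while $T_{27}=0$.

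With that convention, the argument runs as follows. The Hodge condition is the vanishing of the $\overline{B}_1$-distribution. Each odd character of $G_d$ occurs once in $U_d\otimes\C$, where this map is a nonzero multiple of $B_{1,\bar\chi_0}$. Hence $B_d/(D_d+S_d)$ is exactly the torsion of $U_d/(1+[-1])U_d$, i.e.\ $\hat H^0(\{\pm1\},U_d)$, since $U_d$ is $\Z$-free.

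Kubert's work (cf.\ \cite{kubert1979universal}) computes this group:
\begin{itemize}
\item $U_d$ is a tensor product of local distributions;
\item the local factor at each $p\in P$ contributes one odd-degree Tate class, represented by the half-system $(1,\mu_p,\ldots,\mu_p^{\varphi(p)/2-1})_p$;
\item Künneth then gives a basis indexed by even-cardinality sets of elements of $P$ dividing $d$, the empty set corresponding to the discarded symbol $(0)$.
\end{itemize}
This is where $Q$ and $\xi_{q,1}=\prod_{p\mid q}(1,\mu_p,\ldots,\mu_p^{\varphi(p)/2-1})_p$ come from, with the $\tau$-recursion supplying the non-primitive parts. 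Without an argument of this kind (or Aoki's), your outline proves at most the rational statement.
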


\begin{coro}\label{cor:reduce_hodge_conjecture_to_Tm} If $T_d \subseteq C_d/D_d$, then the Hodge conjecture is true for all weighted Fermat varieties of degree $d$.
\end{coro}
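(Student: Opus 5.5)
The plan is to chain \cref{thm:generators_of_Bm} with the inclusion \eqref{eqlinstdalg} to obtain $C_d=B_d$, and then invoke \cref{thm:reduce_hodge_conjecture_to_Bm}. The hypothesis $T_d\subseteq C_d/D_d$ makes sense because $D_d\subseteq C_d$: linear characters are algebraic by \cref{proplinchar}. So $C_d/D_d$ is a subgroup of $R_d/D_d$, and the statement asks that each $\xi_q$ with $q\mid d$ (and each of its $G_d$-translates) admit a representative in $C_d$.

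\textbf{Step 1: $C_d$ is a $G_d$-submodule of $R_d$.} Take a generator $\alpha\in\mathfrak{C}^n_d$ and $t\in G_d=(\Z/d\Z)^\times$. Then $t\cdot\alpha\in\mathfrak{C}^n_d$. For even $n$ this is \cref{remgalactionalgchar}. For odd $n$, apply the same remark to $\alpha*\varepsilon_d$, using that $t\cdot\varepsilon_d=\varepsilon_d$ because $t$ is odd when $d$ is even. Since $C_d$ is generated by the sets $\mathfrak{C}^n_d$, it is stable under $G_d$. Consequently the $G_d$-module generated by $T_d$ inside $R_d/D_d$ is contained in $C_d/D_d$.

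\textbf{Step 2: $C_d=B_d$.} By \cref{thm:generators_of_Bm}, every $\beta\in B_d$ can be written as $\beta=\delta+\sigma+\tau$ with $\delta\in D_d$, $\sigma\in S_d$, and $\tau$ a representative in $R_d$ of an element of the $G_d$-module generated by $T_d$. By Step 1 we may take $\tau\in C_d$. By \eqref{eqlinstdalg}, $D_d+S_d\subseteq C_d$. Hence $\beta\in C_d$, so $B_d\subseteq C_d$. The reverse inclusion $C_d\subseteq B_d$ was already noted. Therefore $C_d=B_d$, and \cref{thm:reduce_hodge_conjecture_to_Bm} gives the Hodge conjecture for every weighted Fermat variety of degree $d$.

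The only point needing care is the meaning of "generated by $T_d$" in \cref{thm:generators_of_Bm}. The elements $\xi_q$ live in $R_q/D_q$, and we regard them in $R_d/D_d$ through the lift $R_q\hookrightarrow R_d$. This lift sends $D_q$ into $D_d$ and $C_q$ into $C_d$, since $C_q=C_d\cap R_q$. With that understood, the argument is pure bookkeeping with no real obstacle.
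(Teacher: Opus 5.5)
Your argument is correct and is essentially the paper's proof: \cref{thm:generators_of_Bm} gives $B_d/D_d=T_d+(D_d+S_d)/D_d$, the hypothesis together with \eqref{eqlinstdalg} yields $B_d\subseteq C_d$, and \cref{thm:reduce_hodge_conjecture_to_Bm} then finishes. Your Step 1 is valid but not needed, since $T_d$ is by definition already the $G_d$-submodule generated by the $\xi_q$, so the hypothesis $T_d\subseteq C_d/D_d$ already covers all $G_d$-translates.
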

\begin{proof} By \cref{thm:generators_of_Bm}, $B_d/D_d = T_d+((D_d + S_d)/D_d)$. By \eqref{eqlinstdalg} and the hypothesis, $B_d \subseteq C_d$, so the Hodge conjecture holds by \cref{thm:reduce_hodge_conjecture_to_Bm}.
\end{proof}

\begin{coro}[\cite{aoki1987}]\label{correddivQ}
For every natural $d\in \N$ of the form $d=2^{e_0}p_1^{e_1}\cdots p_k^{e_k}$ with $p_i\in P$ different, $e_i>0$ for $i=1,\ldots,k$ and $e_0\in\{0,1\}$, the Hodge conjecture for Fermat varieties holds in degree $d$, if and only if, it holds in degree $d_0:=p_1\cdots p_k$. Furthermore, if $k$ is odd, then it also holds, if and only if, it holds in degree $d_0/p_i$ for all $i=1,\ldots,k$.
\end{coro}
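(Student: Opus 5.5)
The plan is to combine \cref{cor:reduce_hodge_conjecture_to_Tm} with the divisibility corollary of Section 2. Throughout, "the Hodge conjecture holds in degree $e$" means it holds for $X^n_e$ for every even $n$.

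\emph{Easy directions.} If the conjecture holds in degree $d$, it holds in every degree $e\mid d$. Indeed, the map $X^n_d\to X^n_e$, $x\mapsto x^{d/e}$, is finite, so \cref{propalgcycfinitemap} (equivalently the corollary on $m'\mid m$) applies. Since $d_0\mid d$ and $d_0/p_i\mid d_0$, this gives the implications "degree $d$ $\Rightarrow$ degree $d_0$" and "degree $d_0$ $\Rightarrow$ degrees $d_0/p_i$".

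\emph{Step 1: from degree $e$ to $B_e=C_e$.} Suppose the conjecture holds in degree $e$. I claim $B_e=C_e$.
\begin{itemize}
\item $B_e$ is generated by the classes of $\mathfrak{B}^n_e$ for $n\ge -1$.
\item For $n$ even, $\mathfrak{B}^n_e=\mathfrak{C}^n_e$ by \cref{propHCchar}.
\item For $n$ odd (only possible with $e$ even), $\alpha\in\mathfrak{B}^n_e$ gives $\alpha*\varepsilon_e\in\mathfrak{B}^{n+1}_e=\mathfrak{C}^{n+1}_e$ by \eqref{eqjoinepsilonHodge}. Hence $\alpha\in\mathfrak{C}^n_e$ by the definition of algebraic tuples.
\end{itemize}
Consequently every $\xi_q$ with $q\mid e$ lies in $C_e/D_e$. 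Here we use that $\xi_q\in B_q/D_q$, together with $B_q\subseteq B_e$ and $C_q\subseteq C_e$ under the lift.

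\emph{Step 2: the exceptional generators of $d$ and $d_0$ agree.} The key arithmetic observation is
\[
\{q\in Q: q\mid d\}=\{q\in Q: q\mid d_0\}.
\]
To see this, note that an element $q\in Q$ is a product of distinct elements of $P$, each dividing $q$ exactly to the "first power" in $P$: odd primes to exponent one, and $2$ to exponent $0$ or $2$. So $q\mid d$ forces each odd prime factor of $q$ to be some $p_i$. If $4\mid q$, then $4\mid d$; since $e_0\le 1$, the factor $2^2$ cannot come from $2^{e_0}$, so $4$ must be one of the $p_i$. Hence $q\mid d_0$. The reverse inclusion is clear since $d_0\mid d$.

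\emph{Step 3: degree $d_0\Rightarrow$ degree $d$.} Assume the conjecture in degree $d_0$. By Step 1, $\xi_q\in C_{d_0}/D_{d_0}\subseteq C_d/D_d$ for every $q\in Q$ with $q\mid d_0$. By Step 2 these are all the generators of the $G_d$-module $T_d$. Since $C_d$ is a $G_d$-submodule (\cref{remgalactionalgchar}), we get $T_d\subseteq C_d/D_d$, and \cref{cor:reduce_hodge_conjecture_to_Tm} gives the conjecture in degree $d$ (indeed for all weighted Fermat varieties of degree $d$).

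\emph{Step 4: the case $k$ odd.} Every $q\in Q$ dividing $d_0$ is a product of an even number of the $k$ elements $p_i$. Since $k$ is odd, $q\neq d_0$, so $q$ omits some $p_i$ and therefore $q\mid d_0/p_i$. If the conjecture holds in every degree $d_0/p_i$, Step 1 gives $\xi_q\in C_{d_0}/D_{d_0}$ for all such $q$. Then \cref{cor:reduce_hodge_conjecture_to_Tm} yields the conjecture in degree $d_0$. Combined with the easy direction, this gives the stated equivalence.

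The step I expect to need the most care is Step 1: passing from the conjecture for actual Fermat varieties to the module statement for tuples of all lengths, including odd ones, and keeping track of lifts so that $\xi_q$, defined in $R_q/D_q$, is correctly viewed inside $C_d/D_d$. The arithmetic in Step 2, in particular the role of the hypothesis $e_0\in\{0,1\}$, is the other point to check carefully.
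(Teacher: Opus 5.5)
Your proof is correct and is essentially the paper's argument. The paper's one-line proof consists of your Step 2 ($T_d=T_{d_0}$, since the exceptional generators $\xi_q$ are the same) and your Step 4 (for $k$ odd, every $q\in Q$ dividing $d_0$ divides some $d_0/p_i$), combined implicitly with \cref{cor:reduce_hodge_conjecture_to_Tm}; your Step 1 and the easy directions via the finite map $x\mapsto x^{d/e}$ spell out what the paper leaves unstated.
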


\begin{proof}
Just note that $T_d=T_{d_0}$ and for $k$ odd, $T_{d_0}$ is generated by $\bigcup_{i=1}^kT_{d_0/p_i}$.
\end{proof}

\begin{coro}
Let $p\in\N$ be a prime number, and $d=p^e$ or $d=2p^e$ for some $e\ge 1$, then $B_d=D_d+S_d$.
\end{coro}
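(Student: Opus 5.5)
The plan is to derive this directly from \cref{thm:generators_of_Bm}: show that the generating module $T_d$ vanishes in $R_d/D_d$. The result then reads $B_d/(D_d+S_d)=0$, i.e. $B_d=D_d+S_d$.

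By definition, $T_d$ is the $G_d$-submodule of $R_d/D_d$ generated by the $\xi_q$ with $q\in Q$ and $q\mid d$. So it suffices to show that no $q\in Q$ divides $d=p^e$ or $d=2p^e$. This is a purely arithmetic check on the set $Q$.

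An element $q\in Q$ is a product $p_1\cdots p_{2k}$ of an even number of \emph{distinct} elements of $P=\{4\}\cup\{\text{odd primes}\}$, with $k\ge 1$ (taking $k\in\N$ to start at $1$; $q=1$ is excluded, consistent with the paper's remark that $d=4$ and primes are the only cases with $B_d=D_d$ and with the use of $\xi_q$ for genuine composite degrees). Since any two distinct elements of $P$ are coprime, such a $q$ has at least two distinct ``$P$-factors''. These are either two distinct odd primes, or $4$ together with an odd prime. I will treat the two shapes of $d$ separately.

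\textbf{Case $d=p^e$ with $p$ odd.} Then $q\mid d$ forces $q$ to be a power of $p$, but $q$ is divisible by two coprime integers greater than $1$. \textbf{Case $d=2p^e$.} Then $4\nmid d$ and only one odd prime divides $d$, so again no $q\in Q$ divides $d$. \textbf{Case $p=2$.} Here $d=2^e$ or $2^{e+1}$, and the only element of $P$ dividing $d$ is $4$; a single $P$-factor cannot form an element of $Q$.

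Hence $T_d=0$, and \cref{thm:generators_of_Bm} gives $B_d/(D_d+S_d)=0$, which is the claim. Alternatively, one can phrase this through \cref{correddivQ}: $d_0=p$ (or $4$), $k=1$ is odd, and the question reduces to $d_0/p_i=1$, where there is nothing to prove.

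The only step needing care is the convention that $k\ge1$ in the definition of $Q$, i.e. that $1\notin Q$. If $1$ were allowed, one would need $\xi_1=0$; but $R_1=0$ (there are no nonzero residues mod $1$), so $\xi_1$ would vanish anyway. Either way the argument goes through, and I expect no real obstacle.
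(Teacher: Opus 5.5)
Your proof is correct and follows the paper's argument: the paper's one-line proof (``in those cases there are no exceptional tuples'') is exactly the observation that no $q\in Q$ divides $p^e$ or $2p^e$, so $T_d=0$ and \cref{thm:generators_of_Bm} gives $B_d=D_d+S_d$. Your case analysis simply spells out that arithmetic check.
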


\begin{proof}
In those cases there are no exceptional tuples.
\end{proof}

\begin{rem}
In particular, the Hodge conjecture holds for Fermat varieties of degree a prime power or twice a prime power.
\end{rem}

We have thus reduced the Hodge conjecture for Fermat varieties of degree $d$ to showing that $T_d \subseteq C_d/D_d$, which reduces to show that $\xi_q\in C_q/D_q$ for all $q\in Q$ such that $q\mid d$. Although this problem is still open, the following important result by Kubert (reproved by Aoki) is the best available approximation to a solution.

\begin{theo}[\cite{kubert1979universal, Aoki1983}]\label{thm:Tm_is_killed_by_two}
For every $q\in Q$
\begin{equation}
2\xi_q \in (D_q + S_q)/D_q.
\end{equation}
And so, the module $B/C$ is of $2$-torsion.
\end{theo}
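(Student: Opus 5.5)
The statement has two parts: the identity $2\xi_q\in(D_q+S_q)/D_q$, and the consequence that $B/C$ is $2$-torsion. For the first part I would not rederive Kubert's result from scratch. I would cite it in Aoki's formulation \cite{Aoki1983}, where it is proved by an explicit decomposition of $2\xi_q$. The strategy there is to exhibit, for each $q\in Q$, an identity in $R_q/D_q$ expressing $2\xi_{q,1}$ as a $\Z[G_q]$-combination of standard tuples $s_{p,i}$ (with $p\mid q$, $p\in P$) plus imprimitive terms. The key input is that the primitive part involves the half-systems $(1,\mu_p,\dots,\mu_p^{\varphi(p)/2-1})_p$. Doubling such a half-system and adding the $D_p$-element $\sum_j(\mu_p^j,-\mu_p^j)$ gives, modulo $D$, the full orbit of $(1)_p$. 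Products of full orbits can then be rewritten using the distribution relations satisfied by the standard tuples $s_{p,i}=(i,k+i,\dots,(p-1)k+i,d-pi)$, since these are exactly the relations $\sum_{j}(i+jk)_d\equiv(pi)_{d}$-type Hecke/distribution identities. The $d$-parts $\xi_{q,d}$ are handled inductively via the defining equation with $\tau_d$, because $\tau_d$ is compatible with these distribution relations. The main obstacle is this inductive bookkeeping of the imprimitive parts. I would not reproduce it; I would cite \cite{kubert1979universal, Aoki1983}.

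The second part, the $2$-torsion of $B/C$, I would actually prove from the first together with earlier results. Let $\beta\in B$. Then $\beta\in B_d$ for some $d$, since $B$ is the union of the $B_d$ (the $R_d$ form a directed system). By \cref{thm:generators_of_Bm}, $B_d/(D_d+S_d)$ is generated as a $G_d$-module by the classes of $\xi_q$ for $q\in Q$, $q\mid d$. Hence we can write
$$\beta=\sum_{q}\sum_{g\in G_d} n_{q,g}\, g\cdot\xi_q^{(d)}+\delta,\qquad \delta\in D_d+S_d.$$
Multiplying by $2$ and applying the first part to each $\xi_q$ gives $2\xi_q\in D_q+S_q$. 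Its lift lies in $D_d+\widetilde S_d$, because lifting preserves $D$ and sends $S_q$ into $\widetilde S$. Since $D$ and $\widetilde S$ are stable under $G_d$, we get $2\beta\in D_d+\widetilde S_d$, and by \eqref{eqlinstdalg} this lies in $C_d\subseteq C$. Therefore $2(B/C)=0$. The only point to check is that the $G_d$-action on $R_d$ preserves $C_d$ and $D_d+\widetilde S_d$. This holds by \cref{remgalactionalgchar} and by the definitions, since these modules are generated by Galois-stable sets of tuples.
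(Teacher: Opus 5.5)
Your proposal matches the paper. The paper also only cites Kubert and Aoki for $2\xi_q\in(D_q+S_q)/D_q$, and your derivation of the $2$-torsion of $B/C$ from \cref{thm:generators_of_Bm}, the $G_d$-equivariance of lifting, and \eqref{eqlinstdalg} is exactly the unwritten reasoning behind the paper's ``And so''. One remark in your heuristic sketch is wrong and should be dropped: for $p\in P$ the full orbit $\sum_{a\in(\Z/p\Z)^\times}(a)_p$ already lies in $D_p$, being a sum of pairs $(a,-a)$, so doubling $H_p=(1,\mu_p,\dots,\mu_p^{\varphi(p)/2-1})_p$ gives $H_p-(-1)\cdot H_p$ modulo $D_p$, not the full orbit. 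Since you cite the result rather than rely on this remark, nothing in your argument depends on it.
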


We state this result here because later, in the fourthcoming article  
we will use it to compute algebraic values of Gamma products.

\subsection{Length of a tuple and Hodge conjecture}\label{sec:4.3}

All results about Hodge conjecture for Fermat varieties rely on the inductive structure of Fermat varieties (c.f. \cite[Theorem II]{sh79}), which in terms of characters corresponds to the join operation. More precisely, the general strategy is to write any $\alpha\in\mathfrak{B}^n_d$ as a join of indecomposable Hodge characters and then study the algebraicity of indecomposable characters. In order to apply an inductive argument on $n$, Shioda notes that it is enough to show that every indecomposable Hodge character $\alpha$ is quasi-decomposable which means that for some $\delta\in\mathfrak{D}^0_d$, the character $\alpha*\delta$ can be decomposed as a join of non-linear Hodge characters. Note that the induction argument starts from $n\ge 4$, since for $n=2$ all Hodge characters are known to be algebraic by Lefschetz $(1,1)$ theorem. With this strategy Shioda proved the Hodge conjecture for Fermat varieties of degree $\le 20$ by hand, and with computational assistance da Silva Jr. proves the degree $21$ case \cite{da2021notes}.  Our strategy to prove the Hodge conjecture from the inductive structure is similar, the main difference is that we incorporate the join, deletion and lift of all known algebraic characters to reduce the length of all exceptional tuples, which (by \cref{thm:generators_of_Bm}) are the only ones left to consider. We remark that the idea of incorporating join and deletion of standard characters was already used by Aoki in \cite[Theorem 1-3]{aoki1987} to prove the Hodge conjecture for Fermat varieties of degree a prime power. Therefore, the main novelty of our algorithm is to incorporate the lift operation, which actually improves all previous known methods. This justifies why we work with the module $R$ instead of just considering the $G_d$-module $R_d$.

\begin{defi}
For every $\alpha\in R$ of the form $\alpha=\sum_{i=1}^kn_i(a_i)_{m_i}$ we define its length as
$$
\ell(\alpha):=\sum_{i=1}^k|n_i|\in \Z_{\ge 0}.
$$
By taking quotients by different submodules, we introduce the following lengths
\begin{equation}
\ell_1(\alpha):=\min\{\ell(\alpha'): \alpha-\alpha'\in D\},
\end{equation}
\begin{equation}
\ell_2(\alpha):=\min\{\ell(\alpha'):\alpha-\alpha'\in D+\widetilde{S}\},
\end{equation}
\begin{equation}
\mathscr{L}(\alpha):=\min\{\ell(\alpha'): \alpha-\alpha'\in C\}.
\end{equation}
Furthermore, when $\alpha\in R_d$ we define
\begin{equation}
\ell_{2,d}(\alpha):=\min\{\ell(\alpha'):\alpha-\alpha'\in D_d+S_d\}.
\end{equation}
\end{defi}

\begin{rem}
It is clear from the definition that
\begin{equation}
\mathscr{L}(\alpha)\le \ell_2(\alpha)\le \ell_1(\alpha).
\end{equation}
Furthermore, for $d'\mid d$  and $\alpha\in R_{d'}\xhookrightarrow{(\cdot)^{(d)}}R_d$ we have
\begin{equation}
\ell_2(\alpha)\le \ell_{2,d}(\alpha)\le\ell_{2,d'}(\alpha).
\end{equation}
In fact, $\ell_2(\alpha)=\min\{\ell_{2,d}(\alpha): \alpha\in R_d\}$.
\end{rem}

\begin{rem}\label{remefftuples}
Since $(a_i)_{m_i}+(-a_i)_{m_i}\in D$, for effects of computing any of the above lengths we can always replace $n_i(a_i)_{m_i}$ by $-n_i(-a_i)_{m_i}$ and so we can assume that $n_i\ge 0$ for all $i=1,\ldots,k$. In other words, we can assume that $\alpha\in\mathfrak{A}^n_m$ for $n=\ell(\alpha)-2$.
\end{rem}

\begin{prop}\label{prop:lengthredto6}
Given $\alpha\in\mathfrak{B}^n_m$ such that $\mathscr{L}(\alpha)\le 6$, then $\alpha\in\mathfrak{C}^n_m$ and so $\mathscr{L}(\alpha)=0$.
\end{prop}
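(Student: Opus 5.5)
Since $\mathscr{L}(\alpha)\le 6$, there is $\alpha'\in R$ with $\ell(\alpha')\le 6$ and $\alpha-\alpha'\in C$. First I will normalize $\alpha'$. By \cref{remefftuples} I may replace each term $n_i(a_i)$ with $n_i<0$ by $-n_i(-a_i)$; this changes $\alpha'$ by an element of $D\subseteq C$ and does not change its length. So I may assume $\alpha'=\sum_i(a_i')$ is an honest tuple with at most $6$ entries, and after lifting, $\alpha'\in\mathfrak{A}^{n'}_{d'}$ with $n'+2=\ell(\alpha')\le 6$, for some degree $d'$ that is a multiple of $d=\operatorname{lcm}(m)$. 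Next I check that $\alpha'$ is Hodge. Because $\alpha\in B$ and $\alpha-\alpha'\in C\subseteq B$, we have $\alpha'\in B$. I would then argue, as in the proof of \cref{thm:reduce_hodge_conjecture_to_Bm}, that membership in $B$ for a single effective tuple forces the tuple itself to be a Hodge tuple. Concretely, write $\alpha'$ as a signed sum of Hodge tuples, move the negative terms to the other side, and realize the resulting equality in $R_{d''}$ as an equality of joins up to permutation. Then use that deletion preserves Hodge tuples (\cref{propjoinalgchar}(i) and its odd-length extension) together with invariance under lift and permutation. This gives $\alpha'\in\mathfrak{B}^{n'}_{d''}$.

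Now I apply known cases of the Hodge conjecture in low dimension. If $\ell(\alpha')$ is even, $\alpha'$ is a Hodge character on a Fermat variety $X^{n'}_{d''}$ with $n'\in\{0,2,4\}$. In each case it is algebraic: for $n'=0$ this is $\mathfrak{B}^0=\mathfrak{D}^0$, for $n'=2$ it is Lefschetz $(1,1)$, and for $n'=4$ it is Kang's theorem $\mathfrak{B}^4_d=\mathfrak{C}^4_d$. If $\ell(\alpha')$ is odd (at most $5$), then $d''$ is even. By \eqref{eqjoinepsilonHodge}, $\alpha'*\varepsilon_{d''}$ is a Hodge character of length at most $6$, hence algebraic by the same cases, and so $\alpha'\in\mathfrak{C}$ by definition of odd algebraic tuples. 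The empty tuple is trivial. In every case $\alpha'\in C$, and therefore $\alpha=(\alpha-\alpha')+\alpha'\in C$.

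Finally I pass from $\alpha\in C$ to $\alpha\in\mathfrak{C}^n_m$. This is exactly the argument in the proof of \cref{thm:reduce_hodge_conjecture_to_Bm}: express $\alpha$ as a signed sum of algebraic tuples, clear denominators by lifting to a common degree, and obtain a join identity. Deletion, lift and permutation preserve algebraicity, so $\alpha$ is algebraic as a character of $X^n_m$; the lift from $X^n_m$ is handled by \cref{propliftchar}. Then $\mathscr{L}(\alpha)=0$, since we may take $\alpha'=0$.

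The main obstacle is the step ``$\alpha'\in B$ and $\alpha'$ effective $\Rightarrow$ $\alpha'$ is a Hodge tuple''. Membership in the module $B$ is a priori weaker than being an element of some $\mathfrak{B}^{n}_{d}$. The cleanest way through is to use the join/deletion bookkeeping of \cref{thm:reduce_hodge_conjecture_to_Bm}. Alternatively, one can use the fact that the Hodge condition $|t\alpha|=\text{const}\cdot\ell$ is additive on $R$, so it defines a linear criterion that is satisfied on $B$. Care is also needed with parity, so that odd-length tuples only occur in even degree.
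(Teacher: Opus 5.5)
Your proposal is correct and follows the paper's route: take a minimal-length $\alpha'$ with $\alpha-\alpha'\in C$, observe that it is a Hodge tuple of length at most $6$, apply the Lefschetz $(1,1)$ theorem and Kang's theorem to get $\alpha'\in C$, and conclude that $\alpha\in C$. You also fill in several steps the paper's short proof leaves implicit: the effective normalization via \cref{remefftuples}; the fact that an effective element of $B$ is a Hodge tuple (additivity of the condition $|t\cdot\alpha|=d\,\ell(\alpha)/2$ is the cleanest way to see this); the odd-length case via $\varepsilon_{d''}$; and the passage from $\alpha\in C$ to $\alpha\in\mathfrak{C}^n_m$ through the argument of \cref{thm:reduce_hodge_conjecture_to_Bm}.
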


\begin{proof}
By definition there exists $\alpha'\in\mathfrak{B}^{\mathscr{L}(\alpha)-2}_{m'}$ and $\gamma\in C$ such that $\alpha-\alpha'=\gamma$. Since $\mathscr{L}(\alpha)-2\le 4$, it follows by Lefschetz $(1,1)$ theorem and \cite[Corollary 3.2]{kang2016refined} that $\mathfrak{B}^{\mathscr{L}(\alpha)-2}_m=\mathfrak{C}^{\mathscr{L}(\alpha)-2}_m$, and so $\alpha=\alpha'+\gamma\in C$.
\end{proof}

In this way, we have reduced the Hodge conjecture to show that $\mathscr{L}(\xi_q)=0$ for all $q\in Q$, for which is enough to bound $\mathscr{L}(\xi_q)\le 6$. In what follows we will present the results of two length reduction algorithms. These algorithms are explained in detail in the next subsection. The first alorithm, which we call Aoki-Shioda algorithm, reduces each $\xi_q\in B_q$ by the elements in $D_q$ and $S_q$ and so it gives upper bounds for $\ell_{2,q}$. The second algorithm incorporates the lift, and so reduces $\xi_q$ by the elements in $D_d$ and $S_d$ for some $q\mid d$, we call it the lifted Aoki-Shioda algorithm and gives upper bounds for $\ell_2$. For small lengths we present for each $\xi_q$ a representative $\alpha'\in B$ such that $\ell(\alpha')$ attains the bound resulting of running the algorithm. As a consequence of the results presented in \cref{tab:representatives_for_exceptional_cycles_AS} and \cref{tab:representatives_for_exceptional_cycles_liftedAS} we obtain the following result.

\begin{theo}\label{mainthm}
The Hodge conjecture holds for $X^n_d$ for $d< 65$ and $d\neq 44, 51, 52$.
\end{theo}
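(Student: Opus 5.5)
We reduce \cref{mainthm} to a finite computation on exceptional tuples. By \cref{cor:reduce_hodge_conjecture_to_Tm} and \cref{thm:reduce_hodge_conjecture_to_Bm}, the Hodge conjecture holds for every (weighted) Fermat variety of degree $d$ once $\xi_q\in C_q/D_q$ for every $q\in Q$ with $q\mid d$. For $d<65$ the elements of $Q$ dividing $d$ are products of exactly two distinct elements of $P$, since the smallest product of four is $3\cdot4\cdot5\cdot7=420$. The list is
\[
q\in\{12,15,20,21,28,33,35,39,44,51,52,55,56,57,60,\dots\}\cap[1,64].
\]
More precisely, we need those $q=p_1p_2\le 64$ with $p_i\in P$; the case $56=8\cdot 7$ is reduced via \cref{correddivQ} to $28$. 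Using \cref{correddivQ} we replace each $d<65$ by its reduced degree $d_0$. If $d_0$ is a product of an odd number of elements of $P$, we descend further to $d_0/p_i$. If $T_{d_0}=0$, as for prime powers and twice prime powers, there is nothing to prove. So the statement follows once we show $\mathscr{L}(\xi_q)\le 6$ for each $q\in Q$, $q\le 64$, $q\notin\{44,51,52\}$; by \cref{prop:lengthredto6} this forces $\mathscr{L}(\xi_q)=0$, i.e.\ $\xi_q\in C_q$.

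Since $\mathscr{L}\le \ell_2\le \ell_{2,q}$, it suffices to exhibit for each such $q$ an explicit $\alpha'$ with $\xi_q-\alpha'\in D+\widetilde S$ and $\ell(\alpha')\le 6$. The procedure has three steps.
\begin{itemize}
\item[(i)] Compute a representative of $\xi_q$ in $R_q$ from Aoki's recursive definition. We fix the primitive roots $\mu_p$, build $\xi_{q,1}$ as the product of the half-sets, and solve for the $d$-parts $\xi_{q,d}$ modulo $D_{q/d}$ via the maps $\tau_d$.
\item[(ii)] Run the Aoki--Shioda reduction. We minimize $\ell(\xi_q-\sum c_j g_j)$ over integer combinations of the $G_q$-orbits of the generators $(1,-1)$, $\varepsilon_q$ and the standard tuples $s_{p,i}$ of degree $q$. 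This is an integer linear program, using \cref{remefftuples} to keep nonnegative coefficients, and it bounds $\ell_{2,q}(\xi_q)$.
\item[(iii)] For the degrees where (ii) yields a bound above $6$ (we expect $q=35$ among them), repeat the program in $R_{kq}$ for small multiples $k$ (e.g.\ $k=2$). Here $\xi_q$ is lifted and reduced by $D_{kq}+S_{kq}$, which gives a bound on $\ell_2(\xi_q)$.
\end{itemize}
In each successful case the resulting $\alpha'$ is recorded, and membership $\xi_q-\alpha'\in D+\widetilde S$ is checked directly as a certificate.

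The main obstacle is step (iii), together with the size of the integer programs. The lifted module $R_{kq}$ grows quickly, and there is no a priori guarantee that any lift reaches length $\le 6$. We would first try $k=2$ and then $k=3$, choosing a lift compatible with an even-degree standard tuple $s_{2,i}$. We expect that this is exactly where $44$, $51$ and $52$ fail, which is why they are excluded from the statement.

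The weighted version follows from the homogeneous case for each degree. Either apply \cref{thm:reduce_hodge_conjecture_to_Bm} directly, since we prove $C_d=B_d$, or use the remark that the Hodge conjecture for $X^n_d$ implies it for $X^n_m$ with $\operatorname{lcm}(m)=d$.
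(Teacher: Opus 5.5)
Your proposal follows the paper's route exactly. Aoki's generation theorem together with Kang's theorem reduces the statement to $\mathscr{L}(\xi_q)\le 6$ for $q\in\{12,15,20,21,28,33,35,39,55,57\}$ (note that $56,60\notin Q$, despite your first list), and the paper certifies these bounds with the Aoki--Shioda program in $R_q$ (lengths $3,4,3,4,4,6,6$ for $q=12,15,20,21,28,33,39$) and its lift to $R_{2q}$ for $q=35,55,57$ (lengths $6,5,6$ in degrees $70,110,114$). What your write-up lacks is only those explicit representatives. They are the actual content of the proof, so they must be exhibited rather than expected.
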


\begin{table}[H]
  \centering
  \begin{tabular}[t]{|c|c||c|c||c|c||c|c||c|c||c|c||c|c|}
    \hline
    $q$ & $\ell$ & $q$ & $\ell$ & $q$ & $\ell$ & $q$ & $\ell$ & $q$ & $\ell$ & $q$ & $\ell$ & $q$ & $\ell$\\
    \hline
    12 & 3 & 69 & 12 & 129 & 22 & 188 & 24 & 247 & 56 & 305 & 62 & 371 & 80 \\ \hline
    15 & 4 & 76 & 11 & 133 & 28 & 201 & 36 & 249 & 42 & 309 & 54 & 377 & 86 \\ \hline
    20 & 3 & 77 & 16 & 141 & 24 & 203 & 42 & 253 & 56 & 316 & 40 & 381 & 64 \\ \hline
    21 & 4 & 85 & 18 & 143 & 32 & 205 & 42 & 259 & 54 & 319 & 72 & 388 & 50 \\ \hline
    28 & 4 & 87 & 16 & 145 & 32 & 209 & 46 & 265 & 54 & 321 & 54 & 391 & 90 \\ \hline
    33 & 6 & 91 & 22 & 148 & 19 & 212 & 27 & 267 & 46 & 323 & 74 & 393 & 66 \\ \hline
    35 & 8 & 92 & 12 & 155 & 30 & 213 & 36 & 268 & 35 & 327 & 54 & 395 & 80 \\ \hline
    39 & 6 & 93 & 16 & 159 & 28 & 215 & 44 & 284 & 36 & 329 & 70 & 403 & 94 \\ \hline
    44 & 7 & 95 & 20 & 161 & 34 & 217 & 46 & 287 & 64 & 332 & 43 & 404 & 51 \\ \hline
    51 & 10 & 111 & 20 & 164 & 22 & 219 & 42 & 291 & 50 & 335 & 70 & 407 & 96 \\ \hline
    52 & 7 & 115 & 24 & 172 & 25 & 221 & 54 & 292 & 36 & 339 & 58 & 411 & 70 \\ \hline
    55 & 10 & 116 & 15 & 177 & 30 & 235 & 48 & 295 & 60 & 341 & 76 & 412 & 52 \\ \hline
    57 & 10 & 119 & 26 & 183 & 36 & 236 & 31 & 299 & 68 & 355 & 70 & 413 & 88 \\ \hline
    65 & 16 & 123 & 26 & 185 & 38 & 237 & 40 & 301 & 70 & 356 & 44 & 415 & 84 \\ \hline
    68 & 10 & 124 & 16 & 187 & 42 & 244 & 31 & 303 & 52 & 365 & 74 & 417 & 70 \\ \hline
  \end{tabular}
  \caption{Length of representatives for $\xi_q$ via the Aoki-Shioda algorithm.}
\label{tab:lengths_of_representatives_for_exceptional_cycles_AS}
\end{table}

\begin{table}[H]
    \centering
    \renewcommand{\arraystretch}{1.2}
    \begin{tabular}{c | l | l }
        $q$ & representative of $\xi_q$ & upper bound for $\ell_{2,q}(\xi_q)$ \\ \hline
        12 & $(1, 8, 9)_{12}$ & 3 \\ 
  15 & $(1, 7, 10, 12)_{15}$ & 4 \\ 
  20 & $(3, 8, 19)_{20}$ & 3 \\ 
  21 & $(1, 10, 15, 16)_{21}$ & 4 \\ 
  28 & $(5, 13, 17, 21)_{28}$ & 4 \\ 
  33 & $(1, 4, 16, 22, 25, 31)_{33}$ & 6 \\
  35 & $(1, 2, 16, 17, 21, 22, 30, 31)_{35}$ & 8 \\
  39 & $(1, 16, 19, 22, 28, 31)_{39}$ & 6 \\ 
  44 & $(8, 11, 15, 19, 27, 31, 43)_{44}$ & 7 \\ 
    \end{tabular}
    \caption{Representatives for $\xi_q$ for small $q$, via the Aoki-Shioda algorithm.}
\label{tab:representatives_for_exceptional_cycles_AS}
\end{table}

\begin{table}[H]
  \centering
\begin{tabular}{|c|c|c||c|c|c||c|c|c||c|c|c|}
    \hline
    $q$ & $d$ & $\ell$ & $q$ & $d$ & $\ell$ & $q$ & $d$ & $\ell$ & $q$ & $d$ & $\ell$ \\
    \hline
    33 & 66 & 4 & 52 & 156 & 7 & 69 & 138 & 9 & 91 & 182 & 17 \\
    \hline
    35 & 70 & 6 & 55 & 110 & 5 & 76 & 228 & 11 & 92 & 276 & 12 \\
    \hline
    39 & 78 & 3 & 57 & 114 & 6 & 77 & 154 & 12 & 93 & 186 & 10 \\
    \hline
    44 & 132 & 7 & 65 & 130 & 10 & 85 & 170 & 12 & 95 & 190 & 10 \\
    \hline
    51 & 102 & 7 & 68 & 204 & 8 & 87 & 174 & 10 & 111 & 222 & 10 \\
    \hline
\end{tabular}
\caption{Length of representatives for $\xi_q$ via the lifted Aoki-Shioda algorithm.}
\label{tab:lengths_of_representatives_for_exceptional_cycles_liftedAS}
\end{table}

\begin{table}[H]
    \centering
    \renewcommand{\arraystretch}{1.2}
    \begin{tabular}{c | l | l }
        $q$ & representative of $\xi_q$ & upper bound for $\ell_2(\xi_q)$ \\ \hline
        33 & $(19, 26, 43, 44)_{66}$ & 4 \\
  35 & $(10, 12, 13, 53, 56, 66)_{70}$ & 6 \\
  39 & $(25, 43, 49)_{78}$ & 3 \\
  44 & $(3, 37, 39, 75, 84, 99, 125)_{132}$ & 7 \\
  51 & $(7, 19, 25, 60, 68, 87, 91)_{102}$ & 7 \\
  52 & $(24, 51, 63, 75, 87, 97, 149)_{156}$ & 7 \\
  55 & $(19, 29, 39, 79, 109)_{110}$ & 5 \\ 
  57 & $(23, 41, 64, 66, 71, 77)_{114}$ & 6 \\
    \end{tabular}
    \caption{Representatives for $\xi_q$ for small $q$, via the lifted Aoki-Shioda algorithm.}
\label{tab:representatives_for_exceptional_cycles_liftedAS}
\end{table}

\begin{rem}
In this way we see that the upper bounds for the lengths reduces the Hodge conjecture of a certain degree to a search in a fixed dimension. On the other hand, to obtain lower bounds for the lengths is a quite hard problem and is related to determine relations between de the degree and the dimension where we can describe precisely all Hodge characters. For instance as a corollary of \cref{theoAokilinearcycles} we obtain the following lower bounds.
\end{rem}

\begin{prop}
Let $\alpha\in B_d$ and $p\mid d$ its smallest prime divisor.
\begin{itemize}
    \item[(i)] If $\alpha\notin D_d$, then $\ell_1(\alpha)\ge p$.
    \item[(ii)] If $\alpha\notin D_d+S_d$, then $\ell_{2,d}(\alpha)\ge p$.
\end{itemize}
\end{prop}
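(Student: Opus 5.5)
We argue by contraposition: a short representative of $\alpha$ modulo the relevant submodule is itself a Hodge tuple of degree $d$ and small length. Such a tuple lies in a dimension range where \cref{theoAokilinearcycles} says Hodge characters of degree $d$ are linear (for (i)) or linear plus standard (for (ii)). This would force $\alpha$ into $D_d$, respectively $D_d+S_d$.

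For (i), suppose $\ell_1(\alpha)<p$ and choose $\alpha'$ with $\alpha-\alpha'\in D$ and $\ell(\alpha')=\ell_1(\alpha)$. First we need $\alpha'\in R_d$. The definition of $\ell_1$ uses $D$, not $D_d$, so we may first have to show that $\ell_1$ can be computed inside $R_d$. I would try the projection $R\to R_d$ that kills all symbols $(a)_e$ with $e\nmid d$, i.e. symbols of denominator not dividing $d$. This map fixes $R_d$ and does not increase length. The point to check is that it sends $D$ into $D_d$: the generators $(a,-a)$ and $\varepsilon$ have all their entries of a single denominator, so each is either killed or kept intact.

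Next, by \cref{remefftuples} we may take $\alpha'$ effective, so $\alpha'\in\mathfrak{A}^{n}_d$ with $n=\ell(\alpha')-2<p-2$. Since $\alpha-\alpha'\in D_d\subseteq B_d$, the element $\alpha'$ is Hodge; if $\ell(\alpha')$ is odd, $d$ is even and we join $\varepsilon_d$. Now $n+2\le p-1<p$, and $p$ is the smallest prime of $d$, so $\gcd(d,(n+2)!)=1$. Hence \cref{theoAokilinearcycles} gives $\mathfrak{B}^n_d=\mathfrak{D}^n_d$, so $\alpha'\in D_d$ and therefore $\alpha\in D_d$, which is the contradiction.

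Two boundary issues need care. The first is the odd-length case: joining $\varepsilon_d$ raises the length by one, and we need $n+2\le p-1$ still to hold, or else to handle the case $p=2$ directly, where the claim $\ell\ge 2$ is immediate. The second is a subtlety in "$\alpha'$ Hodge": membership in $B_d$ is a linear condition, so I would verify that an effective element of $B_d$ is a genuine Hodge tuple. This follows because the defining condition $|t\alpha|=\text{const}$ is additive on effective tuples.

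For (ii) the same scheme works with $D_d+S_d$ in place of $D$. Here the definition already sits inside $R_d$, so no projection is needed. We get $\alpha'$ Hodge of length less than $p$ with $\alpha-\alpha'\in D_d+S_d$, and as before $\alpha'\in D_d\subseteq D_d+S_d$. The main obstacle is the projection/effectivity step in (i), i.e. ensuring the minimizer can be taken effective and in degree $d$ without increasing length. \cref{remefftuples} handles the sign issue.
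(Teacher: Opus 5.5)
Your argument is correct and follows the paper's proof: argue by contraposition, take a representative of length $n+2\le p-1$, observe that $\gcd(d,(n+2)!)=1$, and invoke \cref{theoAokilinearcycles} to put $\alpha$ in $D_d$ (resp.\ $D_d+S_d$). Your extra checks fill in details that the paper's two-line proof leaves implicit: the projection $R\to R_d$ ensuring the $\ell_1$-minimizer lives in degree $d$, the verification that an effective element of $B_d$ is an honest Hodge tuple, and the observation that odd length forces $d$ even, hence $p=2$ and $\alpha'=\varepsilon_d\in D_d$.
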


\begin{proof}
(i) If $\ell_1(\alpha)=\ell(\alpha-\alpha')=n+2\le p-1$ for some $\alpha'\in D$, then $\text{gcd}(d,(n+2)!)=1$ and so by \cref{theoAokilinearcycles} it follows that $\alpha-\alpha'\in D$, hence $\alpha\in D$.

\noindent(ii) If $\ell_{2,d}(\alpha)=\ell(\alpha-\alpha')=n+2\le p-1$ for some $\alpha'\in D_d+S_d$, then $\text{gcd}(d,(n+2)!)=1$ and so by \cref{theoAokilinearcycles} we have that $\alpha-\alpha'\in D_d$, hence $\alpha\in D_d+S_d$.
\end{proof}

\begin{rem}
An immediate consequence of the previous proposition is that if some $\alpha\in B_d$ is such that $\ell_2(\alpha)<p$ for $p\mid d$ its smallest prime divisor, then there exists some $d\mid d'$ such that $\ell_2(\alpha)=\ell_{2,d'}(\alpha)$ and so if $p'\mid d'$ is its smallest prime divisor, then $p'<p$. In other words, in order to reduce the length $\ell_{2,d}(\alpha)$ we have to lift to a multiple of $d$ which is divisible by a small prime $p'<p$.
\end{rem}

\begin{conj}\label{conj1}
There exists an unbounded function $f:\N\to\N$ with finite fibers such that for all $\alpha\in B_d\setminus(D_d+S_d)$
$$
\ell_{2,d}(\alpha)\ge f(d).
$$
\end{conj}

\begin{rem}
The previous conjecture is equivalent to say that the function
$$
f_0(d):=\min\{\ell_{2,d}(\alpha):\alpha\in B_d\setminus(D_d+S_d)\}
$$
has finite fibers.
\end{rem}

\begin{coro}
If \cref{conj1} holds, the Hodge conjecture for Fermat varieties of dimension $n$ reduces to the Hodge conjecture for Fermat varieties of dimension $n$ and degree $d$ such that $f_0(d)\le n+2$. In particular, it is reduced to the finite collection of exceptional tuples $\xi_q$ with $q\mid d$ such that $f_0(d)\le n+2$.
\end{coro}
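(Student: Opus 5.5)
The statement to prove is the final corollary: assuming \cref{conj1}, the Hodge conjecture for Fermat varieties of dimension $n$ reduces to finitely many degrees. I would work entirely inside the formal module $R$ and combine three earlier results: \cref{thm:reduce_hodge_conjecture_to_Bm}, Aoki's generation theorem \cref{thm:generators_of_Bm}, and the inclusion \eqref{eqlinstdalg}. Fix $n$ even and a degree $d$. By \cref{propHCchar} together with the lift invariance of \cref{propliftchar}, it is enough to show that every $\alpha\in\mathfrak{B}^n_d$ lies in $\mathfrak{C}^n_d$.

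First, viewed in $R_d$, such an $\alpha$ has length $\ell(\alpha)=n+2$, so $\ell_{2,d}(\alpha)\le n+2$. Split into two cases.
\begin{itemize}
\item If $\alpha\in D_d+S_d$, then $\alpha\in C$ by \eqref{eqlinstdalg}. The contradiction argument in the proof of \cref{thm:reduce_hodge_conjecture_to_Bm}, which rewrites an identity in $R_{d'}$ as a join relation, then gives $\alpha\in\mathfrak{C}^n_d$.
\item Otherwise $\alpha\in B_d\setminus(D_d+S_d)$. Then $f_0(d)\le\ell_{2,d}(\alpha)\le n+2$, so $d$ lies in the set $E_n:=\{d: f_0(d)\le n+2\}$.
\end{itemize}
Under \cref{conj1} the function $f_0\ge f$ is bounded below by an unbounded function with finite fibers. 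Hence $f_0^{-1}(\{0,\dots,n+2\})$ is a finite union of finite fibers, so $E_n$ is finite. This gives the first assertion: dimension $n$ in an arbitrary degree reduces to dimension $n$ in the finitely many degrees of $E_n$.

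For the second assertion, take $d\in E_n$ and a non-standard Hodge character $\alpha$ of degree $d$. By \cref{thm:generators_of_Bm} we can write $\alpha\equiv\sum_i g_i\xi_{q_i}$ modulo $D_d+S_d$, with $q_i\in Q$, $q_i\mid d$ and $g_i\in\mathbb{Z}[G_d]$. Since $C$ is stable under the Galois action (\cref{remgalactionalgchar}), once each such $\xi_q$ lies in $C$ we get $\alpha\in C$. The argument of \cref{thm:reduce_hodge_conjecture_to_Bm} then gives $\alpha\in\mathfrak{C}^n_d$. So the whole question reduces to the finite collection $\{\xi_q: q\mid d,\ d\in E_n\}$.

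The main obstacle is a subtlety in the dimension bookkeeping. The tuples $\xi_q$ may have representatives longer than $n+2$, so the reduction genuinely passes through the exceptional tuples themselves and not through dimension-$n$ characters. The statement is phrased exactly to accommodate this: first dimension $n$ and degrees in $E_n$, then the finitely many $\xi_q$. The other point to check is that $C$ is a $G_d$-submodule, so that Galois multiples of the $\xi_q$ stay algebraic. Closure under the Galois action is \cref{remgalactionalgchar}, and closure under sums and differences holds because $C$ is by definition a submodule of $R$.
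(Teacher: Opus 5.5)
Your proof is correct and takes essentially the approach the paper intends; the paper writes out no proof and treats the corollary as immediate. A dimension-$n$ Hodge character $\alpha$ of degree $d$ has $\ell_{2,d}(\alpha)\le n+2$, so it lies in $D_d+S_d\subseteq C$, and is therefore algebraic by the argument of the reduction theorem, unless $f_0(d)\le n+2$; the conjecture makes that set of degrees finite, and Aoki's generation theorem together with the $G_d$-stability of $C_d$ reduces those degrees to the $\xi_q$ with $q\mid d$. You state that stability tersely, but it does hold: lift $t\in(\mathbb{Z}/d\mathbb{Z})^\times$ to a unit modulo a common multiple $d'$ of the degrees involved, and use $t\varepsilon_{d'}=\varepsilon_{d'}$ for the odd-length tuples.
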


\begin{rem}
\cref{tab:lengths_of_representatives_for_exceptional_cycles_liftedAS} suggests that the Hodge conjecture for Fermat sixfolds holds, if and only if, the tuples $\xi_{44}$, $\xi_{51}$, $\xi_{52}$ and $\xi_{68}$ are algebraic. Using the same table one can do a similar (but less reliable) guess for eightfolds. For the case of surfaces, by the Table in \cite{Shioda1981}, we see that the exceptional tuples $\xi_q$ with some multiple $q\mid d$ such that $f_0(d)\le 4$ are precisely $q<44$ with $q\neq 35$. Therefore, the smallest degree Fermat variety admitting a exceptional cycle not coming from surfaces is $X^6_{35}$, while the smallest dimension one admitting it is $X^4_{70}$. Our final remark is that, by \cref{tab:lengths_of_representatives_for_exceptional_cycles_liftedAS}, the smallest degree and dimension Fermat variety where the Hodge conjecture remains open is $X^6_{44}$.
\end{rem}

\subsection{Length reduction algorithms}

In this section we present the length reduction algorithms used to obtain the results of the tables shown in the previous section. Let us start explaining how the Aoki-Shioda algorithm works.

Fix a degree $d$. We work with the module $R_d$, representing each term $$\alpha=\sum_{i=1}^{d-1}n_i (i)_d\in R_d$$ by the vector $v=(n_1,\ldots,n_{d-1})\in\Z^{d-1}$. Let $A\in(\mathbb{Z}_{\geq0})^{(d-1)\times k}$ be the matrix whose columns are an explicit finite set of generators of $D_d+S_d$. Concretely, we take the $G_d$-orbit of $(1,-1)$, together with $\varepsilon_d$ if $d$ is even, and the standard characters $s_{p,i}$ for each prime $p\mid d$. Since $R_d$ has rank $d-1$, the matrix $A$ has exactly $d-1$ rows, and we write $k$ for its number of columns. For example, for $d=6$, a matrix of generators $D_6 + S_6$ is
\begin{align*}
    \begin{pmatrix}
        1 & 0 & 0 & 1 & 0 \\
        0 & 1 & 0 & 0 & 2 \\
        0 & 0 & 1 & 1 & 1 \\
        0 & 1 & 0 & 2 & 0 \\
        1 & 0 & 0 & 0 & 1
    \end{pmatrix}.
\end{align*}

Given a character $\alpha\in \mathfrak{B}^n_d$, it has an associated vector $v\in(\mathbb{Z}_{\geq0})^{d-1}$ under the identification of \cref{remchartotuples}, obtained by recording, for each $a\in(\mathbb{Z}/d\mathbb{Z})\setminus\{0\}$, the number of times $a$ occurs among the coordinates of $\alpha$. So, in order to calculate $\ell_{2,d}(\alpha)$ we need to find $x\in\mathbb{Z}^k$ such that $v-Ax$ has the smallest possible norm. In other words, we want to minimize
\[\|v-Ax\|_1 = \sum_{i=1}^{d-1}\lvert n_i - \sum_{j=1}^k A_{ij}x_j \rvert,\]
where $\lvert\cdot\rvert$ denotes the absolute value. By \cref{remefftuples} we may assume, without loss of generality, that the coefficients of the reduced tuple $v-Ax$ are all non-negative. Under this assumption the $1$-norm becomes simply the sum of the coordinates of the vector, so the optimization problem above can be rewritten as
\begin{equation}
\label{eq:linprogprob}    
    \min_{\begin{smallmatrix}
        x\in\mathbb{Z}^k\\ \sum_{j=1}^kA_{ij}x_j\le n_i\\ \forall j=1,\ldots,d-1
    \end{smallmatrix}} \sum_{i=1}^{d-1}\left( n_i - \sum_{j=1}^k A_{ij}x_j\right)= \max_{\begin{smallmatrix}
        x\in\mathbb{Z}^k\\ \sum_{j=1}^kA_{ij}x_j\le n_i\\ \forall j=1,\ldots,d-1
    \end{smallmatrix}} \sum_{j=1}^kc_jx_j\ , \ \ \text{ where }\ c_j:=\sum_{i=1}^{d-1}A_{ij}.
\end{equation}
Which is an integral linear programming problem. Solving this problem for $d=q$ and $\alpha=\xi_q$ recovers values of $\ell_{2,q}(\xi_q)$. Due to our computational limitations we solved the above linear programming problem restricted to $x\in\{-1,0,1\}^k$ with the \texttt{milp} function of the SciPy library for Python which calls the HiGHS solver. This is the Aoki-Shioda algorithm. This finiteness search restriction imposed on $x$ is the reason why we only obtain the upper bounds for $\ell_{2,q}(\xi_q)$ presented in \cref{tab:representatives_for_exceptional_cycles_AS}.

The lifted Aoki-Shioda algorithm results by iterating the Aoki-Shioda algorithm over the lifts $\alpha^{(d)}$ for $d=\text{lcm}(e,q)$, $2\le e<q$, $q\nmid e$. This algorithm recovers the upper bounds for $\ell_2(\xi_q)$ presented in \cref{tab:representatives_for_exceptional_cycles_liftedAS}.

We present the pseudo-code of the Aoki-Shioda algorithm as follows: The function \texttt{as\_vector} (resp. \texttt{as\_tuple}) is the function that takes a tuple (resp. vector) and returns the vector (resp. tuple) associated to the tuple (resp. vector). In the following GitHub repository \href{https://github.com/mmiirandaaaa/Length-reduction-functions}{\texttt{https://github.com/mmiirandaaaa/Length-reduction-functions}} you can find the code executed to obtain the length reduction of the exceptional tuples of \cref{tab:lengths_of_representatives_for_exceptional_cycles_AS} and \cref{tab:lengths_of_representatives_for_exceptional_cycles_liftedAS}.

\begin{algorithm}
    \caption{\\
    \textbf{Input:} The set of exceptional tuples $T$. \\
    \textbf{Output:} A set $E$ of reduced exceptional tuples via the Aoki-Shioda Algorithm.
    }
    \begin{algorithmic}
        \State $E := \emptyset$ \Comment{In this set $\alpha = a\cdot\alpha$, where $a\in\mathbb{Z}$ and $\alpha$ is a tuple}
        \State $P := \{p\in\mathbb{N}\mid p \text{ prime or }p=4\}\setminus\{2\}$
        \State $Q := \prod_{p_1p_2\in P} p_1p_2 = \{12,15,20,21,\dots\}$
        \For{$q \in Q$}
            \State $L := \{lcm(n,q)\mid n\in\{1,\dots,q-1\}\}$
            \For{$l\in L$}
                \State Let $a_1,\dots,a_k\in\mathbb{Z}^{l-1}$ be the generators of $D_l+S_l$ written as column vectors
                \State $A := \begin{bmatrix} a_1 & a_2 & \dots & a_k\end{bmatrix} \in\mathbb{Z}^{(l-1)\times k}$
                \For{$e \in E$}
                    \If{\texttt{e.deg} $\mid l$} \Comment{\texttt{e.deg} is the degree where the exceptional $e$ appears the first time}
                        \State $e \gets \frac{l}{\texttt{e.deg}} \cdot e$
                        \State $e \gets \texttt{as\_vector}(e)$
                        \State $A \gets \begin{bmatrix}A \mid e\end{bmatrix}$
                    \EndIf
                \EndFor
                \State $t := \texttt{as\_vector}(\xi_q)$
                \State $x\gets$ Solution of the integer linear program given by $\min_{x\in\mathbb{Z}^{col(A)}}{\|t-Ax\|_1}$ subject to $Ax\leq t$
                \State $x \gets \texttt{as\_tuple}(x)$
                \If{\texttt{length}(x) $\leq 4$}
                    \State $E \gets E\cup\{x\}$
                \EndIf
            \EndFor
        \EndFor
        \State \Return $E$
    \end{algorithmic}
\end{algorithm}

\subsection{Fermat surfaces revisited}

In this final section we revisit the main result of \cite{Shioda1981} encoded in a table where he describes all the degrees $d$ where \textit{exceptional} Hodge characters appear in the Fermat surface $X^2_d$. In that work the word exceptional is used to essentially mean characters which are not in $\mathfrak{D}^2_d\cup\mathfrak{S}^2_d$, nor of the form 
$$
s_{2,i}+s_{2,2i}-(2i,d-2i)=\left(i,\frac{d}{2}+i,\frac{d}{2}+2i,d-4i\right),
$$
and nor lifts of characters of $\mathfrak{B}^2_{d'}$ for some $d'$ a proper divisor of $d$. Since by the time the decomposition theorem of Aoki (\cref{thm:generators_of_Bm}) was not available, Shioda measured this difference with a number $\Delta(d)$. Using the Aoki-Shioda algorithm we revisited this table by decomposing all Hodge characters for all such degrees in terms of linear, standard and exceptional characters (in the sense of \cref{sec:4.2}) and count how many characters have a similar type decomposition. We collect these foundings in the following table. In the second column we describe the number of characters of a given decomposition type. For instance, the equality $D+s_2+s_3+\xi_{12}=48$ in degree $12$ means that among the $642$ Hodge characters in $X^2_{12}$, exactly $48$ were written as a non-trivial linear combination of linear characters, standard characters associated to the primes $2$ and $3$, the exceptional character $\xi_{12}$ and their permutations. 

\begingroup
\setlength{\LTleft}{0pt}\setlength{\LTright}{0pt}
\setlength{\LTcapwidth}{\textwidth}
\small
\ifdefined\degcolwd\else\newlength{\degcolwd}\newlength{\degcolaux}\fi
\settowidth{\degcolwd}{$180=2^{2}\cdot 3^{2}\cdot 5$}
\settowidth{\degcolaux}{$156=2^{2}\cdot 3\cdot 13$}\ifdim\degcolaux>\degcolwd\setlength{\degcolwd}{\degcolaux}\fi
\settowidth{\degcolaux}{$120=2^{3}\cdot 3\cdot 5$}\ifdim\degcolaux>\degcolwd\setlength{\degcolwd}{\degcolaux}\fi
\begin{longtable}{@{}>{$}l<{$}@{\hspace{1.2em}}>{\raggedright\arraybackslash}p{\dimexpr\textwidth-\degcolwd-1.2em\relax}@{}}
\caption{Decomposition of the Hodge characters of the Fermat surfaces $X^2_d$.}
\label{tab:hodge-characters-surfaces}\\
\toprule
\multicolumn{1}{@{}l@{\hspace{1.2em}}}{$d$} & \textbf{Hodge characters} \\
\midrule
\endfirsthead
\caption[]{(continued)}\\
\toprule
\multicolumn{1}{@{}l@{\hspace{1.2em}}}{$d$} & \textbf{Hodge characters} \\
\midrule
\endhead
\multicolumn{2}{r@{}}{\footnotesize\itshape continues on the next page}\\
\endfoot
\bottomrule
\endlastfoot
2 & $D=1$ \\
\midrule
3 & $D=6$ \\
\midrule
4=2^{2} & $D=19$ \\
\midrule
6=2\cdot 3 & $D=61$, $s_{2}=24$ \\
\midrule
8=2^{3} & $D=127$, $s_{2}=48$ \\
\midrule
10=2\cdot 5 & $D=217$, $D+s_{2}=48$, $s_{2}=96$ \\
\midrule
12=2^{2}\cdot 3 & $D=331$, $D+\xi_{12}=24$, $D+s_{2}=24$, $D+s_{2}+\xi_{12}=72$, $D+s_{2}+s_{3}+\xi_{12}=48$, $\xi_{12}=24$, $s_{2}+s_{3}+\xi_{12}=24$, $s_{2}=72$, $s_{3}=24$ \\
\midrule
14=2\cdot 7 & $D=469$, $D+s_{2}=192$, $s_{2}=144$ \\
\midrule
15=3\cdot 5 & $D=474$, $D+\xi_{15}=24$, $D+s_{3}+\xi_{15}=48$, $D+s_{3}+s_{5}=72$, $D+s_{3}+s_{5}+\xi_{15}=96$, $\xi_{15}=24$, $s_{3}=96$ \\
\midrule
18=2\cdot 3^{2} & $D=817$, $D+s_{2}=96$, $D+s_{2}+s_{3}=480$, $s_{2}+s_{3}=24$, $s_{2}=144$, $s_{3}=96$ \\
\midrule
20=2^{2}\cdot 5 & $D=1027$, $D+s_{2}=36$, $D+s_{2}+\xi_{20}=576$, $D+s_{2}+s_{5}=156$, $s_{2}=192$ \\
\midrule
21=3\cdot 7 & $D=1044$, $D+\xi_{21}=24$, $D+s_{3}=48$, $D+s_{3}+\xi_{21}=96$, $D+s_{3}+s_{7}=48$, $D+s_{3}+s_{7}+\xi_{21}=120$, $D+s_{7}+\xi_{21}=24$, $\xi_{21}=24$, $s_{3}=144$ \\
\midrule
24=2^{3}\cdot 3 & $D=1519$, $D+\xi_{12}=24$, $D+s_{2}=24$, $D+s_{2}+\xi_{12}=24$, $D+s_{2}+s_{3}=432$, $D+s_{2}+s_{3}+\xi_{12}=600$, $\xi_{12}=24$, $s_{2}+\xi_{12}=24$, $s_{2}+s_{3}=24$, $s_{2}+s_{3}+\xi_{12}=72$, $s_{2}=192$, $s_{3}=120$ \\
\midrule
28=2^{2}\cdot 7 & $D=2107$, $D+s_{2}=480$, $D+s_{2}+\xi_{28}=48$, $D+s_{2}+s_{7}+\xi_{28}=48$, $s_{2}=288$ \\
\midrule
30=2\cdot 3\cdot 5 & $D=2437$, $D+\xi_{15}=24$, $D+s_{2}+s_{3}+s_{5}=1632$, $D+s_{2}+s_{3}+s_{5}+\xi_{15}=888$, $\xi_{15}=24$, $s_{2}+s_{3}=24$, $s_{2}+s_{3}+s_{5}=168$, $s_{2}+s_{3}+s_{5}+\xi_{15}=24$, $s_{2}=216$, $s_{3}=192$ \\
\midrule
36=2^{2}\cdot 3^{2} & $D=3571$, $D+\xi_{12}=24$, $D+s_{2}+s_{3}=744$, $D+s_{2}+s_{3}+\xi_{12}=408$, $\xi_{12}=24$, $s_{2}+s_{3}=216$, $s_{2}+s_{3}+\xi_{12}=24$, $s_{2}=288$, $s_{3}=216$ \\
\midrule
40=2^{3}\cdot 5 & $D=4447$, $D+s_{2}=36$, $D+s_{2}+\xi_{20}=360$, $D+s_{2}+s_{5}=348$, $D+s_{2}+s_{5}+\xi_{20}=600$, $s_{2}+s_{5}=24$, $s_{2}=408$ \\
\midrule
42=2\cdot 3\cdot 7 & $D=4921$, $D+s_{2}+s_{3}=96$, $D+s_{2}+s_{3}+s_{7}=1776$, $D+s_{2}+s_{3}+s_{7}+\xi_{21}=2784$, $D+\xi_{21}=24$, $s_{2}+s_{3}=96$, $s_{2}+s_{3}+\xi_{21}=48$, $s_{2}+s_{3}+s_{7}=480$, $s_{2}=48$, $\xi_{21}=24$, $s_{3}=120$ \\
\midrule
48=2^{4}\cdot 3 & $D=6487$, $D+\xi_{12}=24$, $D+s_{2}=36$, $D+s_{2}+\xi_{12}=24$, $D+s_{2}+s_{3}=900$, $D+s_{2}+s_{3}+\xi_{12}=744$, $\xi_{12}=24$, $s_{2}+\xi_{12}=24$, $s_{2}+s_{3}=120$, $s_{2}+s_{3}+\xi_{12}=120$, $s_{2}=384$, $s_{3}=312$ \\
\midrule
60=2^{2}\cdot 3\cdot 5 & $D=10267$, $D+\xi_{12}=24$, $D+\xi_{15}=24$, $D+s_{2}+s_{3}=24$, $D+s_{2}+s_{3}+\xi_{15}=24$, $D+s_{2}+s_{3}+s_{5}=3312$, $D+s_{2}+s_{3}+s_{5}+\xi_{12}+\xi_{15}=504$, $D+s_{2}+s_{3}+s_{5}+\xi_{12}+\xi_{15}+\xi_{20}=384$, $D+s_{2}+s_{3}+s_{5}+\xi_{15}=1008$, $D+s_{2}+s_{3}+s_{5}+\xi_{15}+\xi_{20}=2928$, $D+s_{2}+s_{3}+s_{5}+\xi_{20}=144$, $D+s_{3}+\xi_{15}=24$, $\xi_{12}=24$, $\xi_{15}=24$, $s_{2}+s_{3}=48$, $s_{2}+s_{3}+s_{5}=444$, $s_{2}+s_{3}+s_{5}+\xi_{12}=24$, $s_{2}+s_{3}+s_{5}+\xi_{15}=48$, $s_{2}=384$, $s_{3}+s_{5}=48$, $s_{3}=324$ \\
\midrule
66=2\cdot 3\cdot 11 & $D=12481$, $D+s_{2}+s_{3}=960$, $D+s_{2}+s_{3}+s_{11}+\xi_{33}=480$, $s_{2}+s_{3}=216$, $s_{2}=528$, $s_{3}=480$ \\
\midrule
72=2^{3}\cdot 3^{2} & $D=14911$, $D+\xi_{12}=24$, $D+s_{2}=48$, $D+s_{2}+s_{3}=1416$, $D+s_{2}+s_{3}+\xi_{12}=1200$, $\xi_{12}=24$, $s_{2}+s_{3}=360$, $s_{2}+s_{3}+\xi_{12}=96$, $s_{2}=576$, $s_{3}=504$ \\
\midrule
78=2\cdot 3\cdot 13 & $D=17557$, $D+s_{2}+s_{3}+s_{13}=864$, $D+s_{2}+s_{3}+s_{13}+\xi_{39}=768$, $s_{2}+s_{3}=48$, $s_{2}+s_{3}+s_{13}=996$, $s_{2}=300$, $s_{3}=120$ \\
\midrule
84=2^{2}\cdot 3\cdot 7 & $D=20419$, $D+\xi_{12}=24$, $D+s_{2}+s_{3}=192$, $D+s_{2}+s_{3}+s_{7}=3000$, $D+s_{2}+s_{3}+s_{7}+\xi_{12}=24$, $D+s_{2}+s_{3}+s_{7}+\xi_{12}+\xi_{21}+\xi_{28}=96$, $D+s_{2}+s_{3}+s_{7}+\xi_{21}+\xi_{28}=3648$, $D+s_{2}+s_{3}+s_{7}+\xi_{28}=48$, $D+\xi_{21}=24$, $\xi_{12}=24$, $s_{2}+s_{3}=120$, $s_{2}+s_{3}+\xi_{21}=24$, $s_{2}+s_{3}+s_{7}=1068$, $s_{2}+s_{3}+s_{7}+\xi_{12}=24$, $s_{2}+s_{3}+s_{7}+\xi_{21}=72$, $s_{2}=48$, $\xi_{21}=24$, $s_{3}+s_{7}=24$, $s_{3}=276$ \\
\midrule
90=2\cdot 3^{2}\cdot 5 & $D=23497$, $D+\xi_{15}=24$, $D+s_{2}+s_{3}=96$, $D+s_{2}+s_{3}+s_{5}=2688$, $D+s_{2}+s_{3}+s_{5}+\xi_{15}=1464$, $\xi_{15}=24$, $s_{2}+s_{3}+s_{5}=540$, $s_{2}+s_{3}+s_{5}+\xi_{15}=24$, $s_{2}=612$, $s_{3}+s_{5}=120$, $s_{3}=528$ \\
\midrule
120=2^{3}\cdot 3\cdot 5 & $D=42127$, $D+\xi_{12}=24$, $D+\xi_{15}=24$, $D+s_{2}+s_{3}=24$, $D+s_{2}+s_{3}+s_{5}=4320$, $D+s_{2}+s_{3}+s_{5}+\xi_{12}=24$, $D+s_{2}+s_{3}+s_{5}+\xi_{12}+\xi_{15}=1368$, $D+s_{2}+s_{3}+s_{5}+\xi_{12}+\xi_{15}+\xi_{20}=960$, $D+s_{2}+s_{3}+s_{5}+\xi_{15}=1032$, $D+s_{2}+s_{3}+s_{5}+\xi_{15}+\xi_{20}=3744$, $D+s_{2}+s_{3}+s_{5}+\xi_{20}=480$, $D+s_{3}+\xi_{15}=24$, $\xi_{12}=24$, $\xi_{15}=24$, $s_{2}+s_{3}+s_{5}=2340$, $s_{2}+s_{3}+s_{5}+\xi_{12}=96$, $s_{2}+s_{3}+s_{5}+\xi_{15}=48$, $s_{2}=12$, $s_{3}+s_{5}=48$, $s_{3}=48$ \\
\midrule
156=2^{2}\cdot 3\cdot 13 & $D=71611$, $D+\xi_{12}=24$, $D+s_{2}=12$, $D+s_{2}+s_{3}+s_{13}=1740$, $D+s_{2}+s_{3}+s_{13}+\xi_{12}=72$, $D+s_{2}+s_{3}+s_{13}+\xi_{12}+\xi_{39}=1200$, $D+s_{2}+s_{3}+s_{13}+\xi_{39}=192$, $\xi_{12}=24$, $s_{2}+s_{3}=120$, $s_{2}+s_{3}+s_{13}=708$, $s_{2}+s_{3}+s_{13}+\xi_{12}=24$, $s_{2}=1128$, $s_{3}+s_{13}=48$, $s_{3}=972$ \\
\midrule
180=2^{2}\cdot 3^{2}\cdot 5 & $D=95587$, $D+\xi_{12}=24$, $D+\xi_{15}=24$, $D+s_{2}+s_{3}+s_{5}=5208$, $D+s_{2}+s_{3}+s_{5}+\xi_{12}+\xi_{15}=600$, $D+s_{2}+s_{3}+s_{5}+\xi_{12}+\xi_{15}+\xi_{20}=1320$, $D+s_{2}+s_{3}+s_{5}+\xi_{12}+\xi_{20}=24$, $D+s_{2}+s_{3}+s_{5}+\xi_{15}=1248$, $D+s_{2}+s_{3}+s_{5}+\xi_{15}+\xi_{20}=2856$, $D+s_{2}+s_{3}+s_{5}+\xi_{20}=408$, $\xi_{12}=24$, $\xi_{15}=24$, $s_{2}+s_{3}+s_{5}=3204$, $s_{2}+s_{3}+s_{5}+\xi_{12}=24$, $s_{2}+s_{3}+s_{5}+\xi_{15}=48$, $s_{2}=372$, $s_{3}+s_{5}=48$, $s_{3}=168$ \\
\end{longtable}
\endgroup

Certainly, since the generators of $B_d$ are not independent in general, such decompositions may change depending on which solutions of the linear programming problem \eqref{eq:linprogprob} we find. But an interesting fact about the above table is that the only exceptional tuples appearing in the decomposition of Hodge characters of length $4$ are $\xi_q$ with $q<44$ and $q\neq 35$ (note also that these are the only $q\in Q$ dividing some $d$ in the table). This gives a stronger evidence that for $q=35$ or $q\ge 44$ we should have $\ell_{2,q}(\xi_q)>4$.

\begin{rem}
The reason why Shioda's table includes some cases with no exceptional characters (like $d\le 10$), is because he considers all cases where $\Delta(d)\neq 0$, not caring about the cases where the formula of $\Delta(d)$ is subtracting repeated terms more than once (this is why $\Delta(d)<0$ for $d\le 10$). Since this over-subtracting issue only happens for small degrees, it does not affect the main result of \cite{Shioda1981} which was latter proved by Aoki \cite[Theorem C]{Aoki1983}.
\end{rem}



